\newcommand{\fabrice}[1]{\textcolor{black}{#1}}
\def\MM{\mathbb{M}}
\def\RR{\mathbb{R}}
\def\be{\begin{equation}}
\def\ee{\end{equation}}
\let\ds\displaystyle
\let\eps\varepsilon
\newtheorem{thm}{Theorem}[section]
\newtheorem{prop}[thm]{Proposition}
\newtheorem{lem}[thm]{Lemma}
\newtheorem{rem}[thm]{Remark}
\begin{document}

\title{Duality-based Asymptotic-Preserving method for highly
  anisotropic diffusion equations}

\author{Pierre Degond\footnotemark[2]\ \footnotemark[3] \and  Fabrice
  Deluzet\footnotemark[2]\ \footnotemark[3] 
  \and Alexei Lozinski\footnotemark[2]
  \and Jacek Narski\footnotemark[2] \and Claudia Negulescu\footnotemark[4] } 

\renewcommand{\thefootnote}{\fnsymbol{footnote}}

\footnotetext[2]{Universit\'e de Toulouse, UPS, INSA, UT1, UTM, Institut de Math\'ematiques de Toulouse, F-31062 Toulouse, France}
\footnotetext[3]{CNRS, Institut de Math\'ematiques de Toulouse UMR 5219, F-31062 Toulouse, France}
\footnotetext[4]{CMI/LATP, Universit\'e de Provence, 39 rue Fr\'ed\'eric Joliot-Curie 13453 Marseille cedex 13} 

\renewcommand{\thefootnote}{\arabic{footnote}}
\maketitle

\begin{abstract}
  The present paper introduces an efficient and
  accurate numerical scheme for the solution of a highly anisotropic
  elliptic equation, the anisotropy direction being given by a
  variable vector field. This scheme is based on an asymptotic
  preserving reformulation of the original system, permitting an
  accurate resolution independently of the anisotropy strength and
  without the need of a mesh adapted to this anisotropy. The
  counterpart of this original procedure is the larger system size, enlarged by adding auxiliary variables
  and Lagrange multipliers. This Asymptotic-Preserving method generalizes the method investigated in
  a previous paper \cite{DDN} to the case of an arbitrary anisotropy
  direction field.
\end{abstract}
\section{Introduction}

Anisotropic problems are common in mathematical modeling of physical
problems. They occur in various fields of applications such as flows
in porous media \cite{porous1,TomHou}, semiconductor modeling
\cite{semicond}, quasi-neutral plasma simulations \cite{Navoret},
image processing \cite{image1,Weickert}, atmospheric or oceanic flows
\cite{ocean} and so on, the list being not exhaustive. The initial
motivation for this work is closely related to magnetized plasma
simulations such as atmospheric plasma \cite{Kelley2,Kes_Oss},
internal fusion plasma \cite{Beer,Sangam} or plasma thrusters
\cite{SPT}. In this context, the media is structured by the magnetic
field, which may be strong in some regions and weak in others. Indeed,
the gyration of the charged particles around magnetic field lines
dominates the motion in the plane perpendicular to magnetic
field. This explains the large number of collisions in the
perpendicular plane while the motion along the field lines is rather
undisturbed. As a consequence the mobility of particles in different
directions differs by many orders of magnitude. This ratio can be as
huge as $10^{10}$. On the other hand, when the magnetic field is weak
the anisotropy is much smaller. As the regions with weak and strong
magnetic field can coexist in the same computational domain, one needs
a numerical scheme which gives accurate results for a large range of
anisotropy strengths. The relevant boundary conditions in many fields
of application are periodic (for instance in simulations of the
tokamak plasmas on a torus) or Neumann boundary conditions
(atmospheric plasma for example \cite{BCDDGT_1}). For these reasons
we propose a strongly anisotropic model problem for wich we wish to
introduce an efficient and accurate numerical scheme. This model
problem reads
\begin{gather}
  \left\{ 
    \begin{array}{ll}
      - \nabla \cdot \mathbb A \nabla \phi^{\varepsilon } = f       & \text{ in }
      \Omega, \\[3mm]
n \cdot \mathbb A \nabla \phi^{\varepsilon }= 0
      & \text{ on } \partial\Omega _N\,,\\[3mm]
      \phi^{\varepsilon }= 0
      & \text{ on } \partial\Omega _D\,,
    \end{array}
  \right.
  \label{eq:Jg0a}
\end{gather}
where $\Omega \subset \RR^{2}$ or $\Omega \subset \RR^{3}$ is a
bounded domain with boundary $\partial \Omega = \partial\Omega _D
\cup \partial\Omega _N$ and outward normal $n$. The direction of the
anisotropy is defined by a vector field $B$, where we suppose
$\text{div} B = 0$ and $B \neq 0$. The direction of $B$ is given by a
vector field $b = B/|B|$. The anisotropy matrix is then defined as
\begin{gather} \mathbb A = \frac{1}{\varepsilon }
  A_\parallel b \otimes b + (Id - b \otimes b)A_\perp (Id - b \otimes
  b)
  \label{eq:Jh0a}
\end{gather}
and $\partial \Omega _D = \{ x \in \partial\Omega \ | \ b (x) \cdot n
= 0 \}$. The scalar field $A_\parallel>0$ and the symmetric positive
definite matrix field $A_\perp$ are of order one while the parameter
$0 < \varepsilon < 1$ can be very small, provoking thus the high
anisotropy of the problem. This work extends the results of
\cite{DDN}, where the special case of a vector field $b$, aligned with
the $z$-axis, was studied. 
\color{black}
An extension of this approach is proposed
in \cite{besse} to handle more realistic anisotropy topologies.  It
relies on the introduction of a curvilinear coordinate system with one
coordinate aligned with the anisotropy direction. Adapted
 coordinates are widely used in the framework of plasma
simulation (see for instance \cite{Beer, Dhaeseleer, Miamoto}), 
coordinate systems being either developped to fit particular magnetic field
geometry or plasma equilibrium (Euler potentials \cite{Stern}, toroidal and poloidal
 \cite{Gysela, poltor}, quasiballooning \cite{Dimits}, Hamada \cite{Hamada} and Boozer
\cite{Boozer} coordinates). Note that the study of certain plasma regions in
a tokamak have motivated the use of non-orthogonal coordinates systems \cite{Igitkhanov}. 
In contrast with all these methods, we propose here a numerical scheme that uses coordinates and meshes independent of the
anisotropy direction, like in \cite{Ottaviani}. This
feature offers the capability to easily treat time evolving
anisotropy directions. This \fabrice{is very} important in the context
of tokamak plasma simulation, the anisotropy being driven by the
magnetic field which is time dependent.
\color{black}

One of the difficulties associated with the numerical solution of
problem (\ref{eq:Jg0a}) lies in the fact that this problem becomes
very ill-conditioned for small $0 < \varepsilon \ll 1$. Indeed,
replacing $\varepsilon $ by zero yields an ill-posed problem as it has
an infinite number of solutions (any function constant along the $b$
field solves the problem with $\varepsilon =0 $). In the discrete case
the problem translates into a linear system which is ill-conditioned,
as it mixes the terms of different orders of magnitude for
$\varepsilon \ll 1 $. As a consequence the numerical algorithm for
solving this linear system gives unacceptable errors (in the case of
direct solvers) or fails to converge in a reasonable time (in the case
of iterative methods).

This difficulty arises when the boundary conditions supplied to the
dominant $O (1/\varepsilon )$ operator lead to an ill-posed
problem. This is the case for Neumann boundary
conditions imposed on the part of the boundary with $b \cdot n \neq 0$ as well as for periodic boundary conditions. 
If instead, the boundary conditions are such that the dominant operator
gives a well-posed problem, the numerical difficulty vanishes.  One
can resort to standard methods, as the dominant operator is sufficient
to determine the limit solution. This is the case for Dirichlet and
Robin boundary conditions. The problem addressed in this paper arises
therefore only with specific boundary conditions. It has however a
considerable impact in numerous physical problems concerning plasmas,
geophysical flows, plates and shells as an example. In this paper, we
will focus on Neumann boundary condition since they represent a larger
range of physical applications. The periodic boundary
conditions can be addressed in a very similar way.

Numerical methods for anisotropic problems have been extensively
studied in the literature. Distinct methods have been
developed. Domain decomposition techniques using multiple coarse grid
corrections are adapted to the anisotropic equations in
\cite{Giraud,Koronskij}. Multigrid methods have been studied in
\cite{Gee,Notay}. For anisotropy aligned with one or two directions,
point or plane smoothers are shown to be very efficient
\cite{ICASE}. The $hp$-finite element method is also known to give
good results for singular perturbation problems \cite{Melenek}. All
these methods have in common that they try to discretize the
anisotropic PDE as it is written and then to apply purely numerical
tricks to circumvent the problems related to lack of accuracy of the
discrete solution or to the slow convergence of iterative
algorithms. This leads to methods which are rather difficult to
implement.

The approach that we pursue in this paper is entirely different: we
reformulate first the original PDE in such a way that the resulting
problem can be efficiently and accurately discretized by
straight-forward and easily implementable numerical methods for any
anisotropy strength. Our scheme is related to the {\it Asymptotic
  Preserving} method introduced in \cite{ShiJin}. These techniques are
designed to give a precise solution in the various regimes with no
restrictions on the computational meshes and with additional property
of converging to the limit solution when $\varepsilon \rightarrow
0$. The derivation of the Asymptotic Preserving method requires
identification of the limit model. In the case of Singular
Perturbation problems, the original problem is reformulated in such a
way that the obtained set of equations contain both the limit model
and the original problem with a continuous transition between them,
according to the values of $\varepsilon$. This reformulated system of
equation sets the foundation of the AP-scheme. These Asymptotic
Preserving techniques have been explored in previous studies, for instance
quasi-neutral or gyro-fluid limits \cite{Crispel,Sangam}, as well as
anisotropic elliptic problems of the form (\ref{eq:Jg0a}) with vector $b$ aligned
with a coordinate axis \cite{DDN,besse}.

In this paper, we present a new algorithm which extends the results of
\cite{DDN}. The originality of this algorithm consists in the fact,
that it is applicable for variable anisotropy directions $b$, without
additional work. The discretization mesh has not to be adapted to the
field direction $b$, but is simply a Cartesian grid, whose mesh-size
is governed by the desired accuracy, independently on the anisotropy
strength $\eps$. All this is possible by a well-adapted mathematical
framework (optimally chosen spaces, introduction of Lagrange
multipliers). The key idea, as in \cite{DDN}, is to decompose the
solution $\phi $ into two parts: a mean part $p$ which is constant
along the field lines and the fluctuation part $q$ consisting of a
correction to the mean part needed to recover the full solution. Both
parts $p$ and $q$ are solutions to well-posed problems for any
$\varepsilon>0 $. In the limit of $\varepsilon \rightarrow 0 $ the
AP-reformulation reduces to the so called {\it Limit} model (L-model),
whose solution is an acceptable approximation of the P-model solution
for $\varepsilon \ll 1 $ (see Theorem \ref{thm_EX}). In \cite{DDN} the
Asymptotic Preserving reformulation of the original problem was
obtained in two steps. Firstly, the original problem was integrated
along the field lines ($z$-axis) leading to an $\varepsilon
$-independent elliptic problem for the mean part $p$. Secondly, the
mean equation was subtracted from the original problem and the
$\varepsilon $-dependent elliptic problem for the fluctuating part $q$
was obtained. This approach however is not applicable if the field $b$
is arbitrary. In this paper we present a new approach. Instead of
integrating the original problem along the arbitrary field lines, we
choose to force the mean part $p$ to lie in the Hilbert space of
functions constant along the field lines and the fluctuating part $q$
to be orthogonal (in $L^2$ sense) to this space. This is done by a
Lagrange multiplier technique and requires introduction of additional
variables thus enlarging the linear system to be solved. This method
allows to treat the arbitrary $b$ field case, regardless of the field
topology and thus eliminates the
limitations of the algorithm presented in \cite{DDN}.
\textcolor{black}{We note that an alternative method, bypassing the need in Lagrange multipliers, is proposed in \cite{brull}. 
It is based on a reformulation of the original problem as a fourth order equation.}
\\

The outline of this paper is the following. Section \ref{sec:prob_def}
introduces the original anisotropic elliptic problem. The original
problem will be referred to as the Singular-{\it Perturbation} model
(P-model). The mathematical framework is introduced and the {\it
  Asymptotic Preserving} reformulation (AP-model) is then
derived. Section \ref{sec:num_met} is devoted to the numerical
implementation of the AP-formulation. Numerical results are presented
for 2D and 3D test cases, for constant and variable fields $b$. Three
methods are compared (AP-formulation, P-model and L-model) according
to their precision for different values of $\varepsilon $. The
rigorous numerical analysis of this new algorithm will be the subject
of a forthcoming publication.

\section{Problem definition}\label{sec:prob_def}

We consider a two or three dimensional anisotropic problem, given on a
sufficiently smooth, bounded domain $\Omega \subset \mathbb R^d$,
$d=2,3$ with boundary $\partial \Omega$. The direction of the
anisotropy is defined by the vector field $b \in
(C^{\infty}(\Omega))^d$, satisfying $|b(x)|=1$ for all $x \in \Omega$.

\noindent Given this vector field $b$, one can decompose now vectors
$v \in \mathbb R^d$, gradients $\nabla \phi$, with $\phi(x)$ a scalar
function, and divergences $\nabla \cdot v$, with $v(x)$ a vector
field, into a part parallel to the anisotropy direction and a part
perpendicular to it.  These parts are defined as follows:
\begin{equation} 
  \begin{array}{llll}
    \ds v_{||}:= (v \cdot b) b \,, & \ds v_{\perp}:= (Id- b \otimes b) v\,, &\textrm{such that}&\ds
    v=v_{||}+v_{\perp}\,,\\[3mm]
    \ds \nabla_{||} \phi:= (b \cdot \nabla \phi) b \,, & \ds
    \nabla_{\perp} \phi:= (Id- b \otimes b) \nabla \phi\,, &\textrm{such that}&\ds
    \nabla \phi=\nabla_{||}\phi+\nabla_{\perp}\phi\,,\\[3mm]
    \ds \nabla_{||} \cdot v:= \nabla \cdot v_{||}  \,, & \ds
    \nabla_{\perp} \cdot v:= \nabla \cdot v_{\perp}\,, &\textrm{such that}&\ds
    \nabla \cdot v=\nabla_{||}\cdot v+\nabla_{\perp}\cdot v\,,
  \end{array}
\end{equation}
where we denoted by $\otimes$ the vector tensor product. With these
notations we can now introduce the mathematical problem, the so-called
Singular Perturbation problem, whose numerical solution is the main
concern of this paper.

\subsection{The Singular Perturbation problem (P-model)}
We consider the following Singular Perturbation problem
\begin{gather}
    (P)\,\,\,
  \left\{
    \begin{array}{ll}
      -{1 \over \varepsilon} \nabla_\parallel \cdot 
      \left(A_\parallel \nabla_\parallel \phi^{\varepsilon }\right) 
      - \nabla_\perp \cdot 
      \left(A_\perp \nabla_\perp \phi^{\varepsilon }\right) 
      = f 
      & \text{ in } \Omega, \\[3mm]
      {1 \over \varepsilon} 
      n_\parallel \cdot 
      \left( A_\parallel \nabla_\parallel \phi^{\varepsilon } \right)
      +
      n_\perp \cdot 
      \left(A_\perp \nabla_\perp \phi^{\varepsilon }\right) 
      = 0
      & \text{ on } \partial\Omega _N,  \\[3mm]
      \phi^{\varepsilon }= 0
      & \text{ on } \partial\Omega _D\,,
    \end{array}
  \right.
  \label{eq:J07a} 
\end{gather}
where $n$ is the outward normal to $\Omega $ and the boundaries are defined by 
\begin{gather}
  \partial\Omega _D = \{ x \in \partial\Omega \ | \ b (x) \cdot n = 0
  \},\quad \quad \partial\Omega_N = \partial\Omega
  \setminus \partial\Omega_D
  \label{eq:Ju9a}.
\end{gather}
The parameter $0<\eps <1$ can be very small and is responsible for the
high anisotropy of the problem. The aim is to introduce a numerical
scheme, whose computational costs (simulation time and memory), for
fixed precision, are independent of $\eps$.\\
We shall assume in the rest of this
paper the following hypothesis on the diffusion coefficients and the
source terms\\

\noindent {\bf Hypothesis A} \label{hypo} {\it
  Let $f \in L^2(\Omega)$ and $\overset{\circ}{\partial\Omega _D} \neq \varnothing$.
  The diffusion coefficients $A_{\parallel} \in
  L^{\infty} (\Omega)$ and $A_{\perp} \in \MM_{d \times d} (L^{\infty}
  (\Omega))$ are supposed to satisfy
  \begin{gather}
        0<A_0 \le A_{\parallel}(x) \le A_1\,, \quad  \textrm{f.a.a.}\,\,\,x \in \Omega,
        \label{eq:J48a1}
        \\[3mm]
        A_0 ||v||^2 \le v^t A_{\perp}(x) v \le A_1 ||v||^2\,,
        \quad \forall v\in \mathbb R^d\,\,\, \text{and} \,\,\,  \textrm{f.a.a.}\,\,\, x \in \Omega.
        \label{eq:J48a3}
  \end{gather}
}

\noindent As we intend to use the finite element method for the
numerical solution of the P-problem, let us put (\ref{eq:J07a}) under
variational form. For this let ${\cal V}$ be the Hilbert space
$$
{\cal V}:=\{ \phi \in H^1(\Omega)\,\, / \,\, \phi_{| \partial \Omega_D} =0 \}
\,, \quad (\phi,\psi)_{\cal V}:= (\nabla_{||} \phi,\nabla_{||}
\psi)_{L^2} + \eps (\nabla_{\perp} \phi,\nabla_{\perp}
\psi)_{L^2}\,.
$$
Thus, we are seaking for $\phi^\eps \in {\cal V}$, the solution of
\be \label{eq:Ja8a}
a_{||}(\phi^\eps,\psi) + \eps a_{\perp}(\phi^\eps,\psi)=\eps (f,\psi)\,, \quad
\forall \psi \in {\cal V}\,,
\ee
where $(\cdot,\cdot)$ stands for the standard $L^2$ inner product and the continuous bilinear forms $a_{||} : \cal{V} \times \cal{V} \rightarrow
\RR$ and $a_{\perp}: \cal{V} \times \cal{V} \rightarrow \RR$ are given by
\be \label{bil}
\begin{array}{lll}
  \ds a_{||}(\phi,\psi)&:=&\ds \int_{\Omega} A_{||} \nabla_{||}
  \phi \cdot \nabla_{||}\psi\, dx\,, \quad  a_{\perp}(\phi,\psi):=\ds \int_{\Omega} ( A_{\perp} \nabla_{\perp}
  \phi) \cdot \nabla_{\perp}\psi\, dx\,.
\end{array}
\ee
Thanks to Hypothesis A and the Lax-Milgram theorem, problem
(\ref{eq:J07a}) admits a unique solution $\phi^\eps \in {\cal V}$ for
all fixed $\eps >0$.

\subsection{The Limit problem (L-model)}
The direct numerical solution of (\ref{eq:J07a}) may be very
inaccurate for $\varepsilon \ll 1$. Indeed, when $\varepsilon$ tends
to zero, the system reduces to
\begin{gather}
  \left\{
    \begin{array}{ll}
      \displaystyle
      -\nabla_\parallel \cdot 
      \left(A_\parallel \nabla_\parallel \phi\right) 
      = 0 
      & \text{ in } \Omega, \\[3mm]
      \displaystyle
      n_\parallel \cdot 
      \left( A_\parallel \nabla_\parallel \phi \right)
      = 0
      & \text{ on } \partial\Omega _N,  \\[3mm]
      \displaystyle
      \phi= 0
      & \text{ on } \partial\Omega _D.
    \end{array}
  \right.
  \label{eq:Jc8a}
\end{gather}
This is an ill-posed problem as it has an infinite number of solutions
$\phi \in \mathcal G$, where
\begin{gather}
  \mathcal G = \{ \phi \in \mathcal V \ | \ \nabla_\parallel \phi =0\}\,,
  \label{eq:Jd8a}
\end{gather}
is the Hilbert space of functions, which are constant along the field
lines of $b$. This shows that the condition number of the system
obtained by discretizing (\ref{eq:J07a}) tends to $\infty$ as
$\varepsilon\to 0$ so that its solution will suffer from round-off
errors.

For this reason, we should approximate (\ref{eq:J07a}) in the limit
$\varepsilon \rightarrow 0$ differently. Supposing that
$\phi^{\varepsilon } \rightarrow \phi^{0}$ as $\varepsilon \rightarrow
0$ we identify (at first formally) the problem satisfied by
$\phi^{0}$. From the above arguments we know that $\phi^{0} \in
\mathcal G$. Taking now test functions $\psi \in \mathcal G$ in
(\ref{eq:Ja8a}), we obtain
\begin{gather}
  \int_\Omega  A_{\perp} \nabla_{\perp} \phi^{\varepsilon }
  \cdot 
  \nabla_{\perp} \psi \, dx
  =
  \int_\Omega f \psi \, dx
  \label{eq:Jn8a}
  .
\end{gather}
Passing to the limit $\varepsilon \rightarrow 0$ into this equation yields the variational
formulation of the problem satisfied by $\phi^{0}$ (Limit problem):
find $\phi^{0} \in \mathcal G$, the solution of
\begin{gather}
  (L)\,\,\,
  \int_\Omega  A_{\perp} \nabla_{\perp} \phi^{0}
  \cdot 
  \nabla_{\perp} \psi \, dx
  =
  \int_\Omega f \psi \, dx
  \;\; , \;\;  \forall \psi \in \mathcal G\,,
  \label{eq:Jv9a}
\end{gather}
which is a well posed problem. Indeed, the space ${\cal G} \subset {\cal V}$ is a Hilbert space, associated with the inner product
\be \label{sc_G}
(\phi,\psi)_{\cal G}:= (\nabla_{\perp} \phi,\nabla_{\perp}
\psi)_{L^2}\,, \quad \forall \phi, \psi \in {\cal G}\,,
\ee
and the norm $||\cdot ||_{\cal G}$ is equivalent to the $H^1$ norm. This is due to the Poincar\'e inequality, as
$$
||\phi||_{L^2}^2 \le C ||\nabla \phi||_{L^2}^2 = C ||\nabla_{||}
\phi||_{L^2}^2+C ||\nabla_{\perp} \phi||_{L^2}^2 = C ||\nabla_{\perp}
\phi||_{L^2}^2\,, \quad \forall \phi \in {\cal G}\,.
$$
Hypothesis A and the Lax-Milgram lemma
imply the existence and uniqueness of a solution $\phi^0 \in
{\cal G}$ of the Limit problem (\ref{eq:Jv9a}).

\bigskip 
\color{black}
\begin{rem}\label{remark:limit:UniformB}
Let us restrict ourselves for the moment to the simple special case (considered in a previous paper \cite{DDN}) of the two dimensional domain $\Omega  =
  (0,L_x)\times(0,L_z)$ in the $(x,z)$  plane with a constant $b$-field aligned with the $Z$-axis:
\begin{gather}
  b= 
  \left(
    \begin{array}{c}
      0 \\
      1
    \end{array}
  \right)
  \label{eq:Jy9a}
  .
\end{gather}
  The functions in the space ${\cal G}$ are
  independent of $z$ so that ${\cal G}$
  can be identified to $H^1_0(0,L_x)$. The limit
  problem \eqref{eq:Jv9a} now reads:     Find $\phi^0$ in
  $H^1_0(0,L_x)$ verifying
  \begin{equation*}
    \int_0^{L_x} \bar A_\perp(x) \partial_x \phi^0(x) \, \partial_x
    \psi(x) \, dx
    = \int_0^{L_x} \bar f(x) \psi(x) \, dx \,, \qquad \forall \psi \in H^1_0(0,L_x)\,,\\
  \end{equation*}
where $ \bar A_\perp(x) = (1/L_z) \int_0^{\fabrice{L_z}}
A_{\perp,11}(x,z) \, dz$ and $ \bar f(x) = (1/L_z) \int_0^{\fabrice{L_z}} f(x,z) \, dz$ are the mean values of $A_\perp$ and $f$ along the field lines. 
The limit solution $\phi^0$ thus verifies a one-dimensional elliptic equation whose
coefficients are integrated along the anisotropy direction:
\begin{equation}\label{eq:limit:uniformB}
  \begin{split}
  & - \partial_x \Big(\bar A_\perp(x) \, \partial_x \phi^0(x) \Big) = \bar
  f(x)  \text{ on } (0,L_x),   \\
  & \phi^0(0) = \phi^0(L_x) = 0 \,.
  \end{split}
\end{equation}

We see now that $\phi^0(x)$ is a solution to the one dimensional elliptic problem so that it belongs to $H^2(0,L_x)$ provided $f\in L^2(\Omega)$. 
Since $\phi^0$ as a function of $(x,z)$ does not depend on $z$, we have also $\phi^0\in H^2(\Omega)$. This conclusion ($\phi^0\in H^2(\Omega)$) remains valid in the case of 
a cylindrical three dimensional domain $\Omega  = \Omega_{xy}\times(0,L_z)$ in the $(x,y,z)$ space with any sufficiently smooth $\Omega_{xy}$ in the $(x,y)$  plane 
and the field $b$ aligned with the $Z$-axis, $b=(0,0,1)^t$. Indeed, it is easy to see that $\phi^0=\phi^0(x,y)$ solves in this case an elliptic two dimensional problem
in $\Omega_{xy}$ similar to (\ref{eq:limit:uniformB}) so that we can apply the standard regularity results for the elliptic problems. 
These examples show that it is reasonable to suppose $\phi^0\in H^2(\Omega)$ also in more general geometries of $\Omega$ and $b$. This can be indeed proved under the hypotheses 
in Appendix \ref{appA} by specifying the $(d-1)$ dimensional elliptic problem for $\phi^0$. The proof being rather lengthy and technical, we prefer to postpone it to a forthcoming work \cite{AJC}.
\end{rem}
\color{black}

\subsection{The Asymptotic Preserving approach (AP-model)}
In this section we introduce the AP-formulation, which is a
reformulation of the Singular Perturbation problem (\ref{eq:J07a}),
permitting a ``continuous'' transition from the (P)-problem
(\ref{eq:J07a}) to the (L)-problem (\ref{eq:Jv9a}), as $\eps
\rightarrow 0$. For this purpose, each function is decomposed into its
mean part along the anisotropy direction (lying in the subspace
$\mathcal{G}$ of $\mathcal{V}$) and a fluctuating part
(cf. \cite{DDN}) lying in the $L^2$-orthogonal complement
$\mathcal{A}$ of $\mathcal{G}$ in $\mathcal{V}$, defined by
\begin{gather}
  \mathcal A : = 
   \{ \phi \in \mathcal V \ | (\phi,\psi ) =0 \;\; , \;\;  \forall
  \psi \in \mathcal G\}\,.
  \label{eq:Jg8a}
\end{gather}
Note that $(\cdot,\cdot)$ denote here and elsewhere the inner product
of $L^2(\Omega)$.

In what follows, we need the following\\

\noindent {\bf Hypothesis B} {\it The Hilbert-space $\mathcal V$ admits the
  decomposition
\begin{gather}
  \mathcal V = \mathcal G \oplus^{\perp} \mathcal A
\label{eq:Jf8a},
\end{gather}
with ${\cal G}$ given by (\ref{eq:Jd8a}) and ${\cal A}$ given by
(\ref{eq:Jg8a}) and where the orthogonality of the direct sum is taken
with respect to the $L^2$-norm. Denoting by $P$ the orthogonal
projection on $\mathcal G$ with respect to the $L^{2}$ inner product:
\begin{gather}
  P : \mathcal V \rightarrow \mathcal G\,\,\text{ such that }\,\,
  (P\phi,\psi)=(\phi,\psi)\ \ \forall\phi\in\mathcal V,\, \psi\in\mathcal G\,,
  \label{eq:Je8a}
\end{gather}
we shall suppose that this mapping is continuous and that we have the
Poincar\'e-Wirtinger inequality
\begin{equation}
||\phi -P\phi ||_{L^{2}(\Omega )}\leq C||\nabla _{||}\phi ||_{L^{2}(\Omega
)}\,,\quad \forall \phi \in \mathcal{V}\,.  \label{PoinW}
\end{equation}
}

\noindent Applying the projection $P$ to a function $\phi$ is nothing but a
weighted average of $\phi$ along the anisotropy field lines of
$b$. The space ${\cal G}$ is the space of averaged functions (the
parallel $\cal{G}$radient of these averaged functions being equal to
zero), whereas the space ${\cal A}$ is the space of the fluctuations
(the $\cal{A}$verage of the fluctuations being equal to zero).  Note
that the decomposition (\ref{eq:Jf8a}) is not self evident and it may
in fact fail on some ``pathological'' domains $\Omega$. Indeed,
although one can always define an $L^2$-orthogonal projection $\tilde
P\phi$ on the space of functions constant along each field line, for
any $\phi$ with square-integrable $\nabla_{||}\phi$, one cannot assure
in general that $\tilde P\phi$ belongs to $\mathcal{V}$ for
$\phi\in\mathcal{V}$ since one may lose control of the perpendicular
part of the gradient of $\tilde P\phi$.  Fortunately however,
Hypothesis B is typically satisfied for the domains of practical
interest.  The interested reader is referred to Appendix \ref{appA}
for an example of a set of assumptions on $\Omega$ and $b$ which
entail Hypothesis B and which resume essentially to the requirement
for the field $b$ to intersect $\partial\Omega_N$ in a uniformly
non-tangential manner and for the boundary components
$\partial\Omega_N$ and $\partial\Omega_D$ to be sufficiently smooth.

Let us also define the operator 
\begin{gather}
  Q : \mathcal V \rightarrow \mathcal A\,, \quad Q = I - P\,.
  \label{eq:Jh8a}
\end{gather}
Each function $\phi \in \mathcal V$ can be decomposed uniquely as
$\phi = p + q$, where $p = P\phi \in \mathcal G$ and $q = Q\phi \in
\mathcal A$. Using this decomposition, we reformulate the Singular-Perturbation problem (\ref{eq:J07a}). Indeed, replacing $\phi^{\varepsilon}:=p^{\varepsilon}+ q^{\varepsilon}$
in problem (\ref{eq:J07a}) and taking test functions $\eta \in
\mathcal G$ and $\xi \in \mathcal A$ leads to an asymptotic
preserving formulation of the original problem: Find $(p^\varepsilon
,q^\varepsilon ) \in \mathcal G \times \mathcal A$ such that
\begin{gather}
  \left\{
    \begin{array}{ll}
      \displaystyle
      a_{\perp} (p^{\varepsilon },\eta ) + a_{\perp} (q^{\varepsilon},\eta ) = (f,\eta )
      & \forall \eta \in \mathcal G, \\[3mm]
      \displaystyle
      a_{||} (q^{\varepsilon },\xi ) + \varepsilon a_{\perp}
      (q^{\varepsilon},\xi) + \varepsilon a_{\perp} (p^{\varepsilon},\xi)
      = \varepsilon (f, \xi )
      & \forall \xi \in \mathcal A.
    \end{array}
  \right.
  \label{eq:Ji8a}
\end{gather}
Contrary to the Singular Perturbation problem (\ref{eq:J07a}), setting
formally $\eps=0$ in (\ref{eq:Ji8a}) yields the system
\be \label{sy}
\left\{ 
\begin{array}{lll}
\ds a_{\perp}(p^0,\eta)+a_{\perp}(q^0,\eta) &=&\ds(f,\eta) \,, \quad \forall
\eta \in {\cal G}\\[3mm]
\ds a_{||}(q^0,\xi )
&=&\ds 0 \,, \quad \forall \xi \in {\cal A}\,,
\end{array}
\right.
\ee
which has a unique solution $(p^0,q^0) \in \cal{G} \times \cal{A}$,
where $p^0$ is the unique solution of the L-problem (\ref{eq:Jv9a}) and
$q^0 \equiv 0$. Indeed, taking $\xi=q^0$ as test function in the second equation of (\ref{sy}) yields $\nabla_{||} q^0 =0$, which means $q^0 \in \cal{G}$. 
But at the same time, $q^0 \in \cal{A}$, so that $q^0 \in {\cal G} \cap {\cal A} = \{ 0 \}$. Setting then $q^0 \equiv 0$ in the first equation of (\ref{sy}), shows that $p^0$ is the unique solution of the L-problem. 

\begin{thm}\label{thm_EX} 
  For every $\eps>0$ the Asymptotic Preserving formulation (\ref%
  {eq:Ji8a}), under Hypotheses A and B, admits a unique solution
  $(p^{\eps},q^{\eps%
  })\in \mathcal{G}\times \mathcal{A}$, where $\phi
  ^{\eps}:=p^{\eps}+q^{\eps}$ is the unique solution in $\mathcal{V}$
  of the Singular Perturbation model (%
  \ref{eq:J07a}).\newline These solutions satisfy the bounds
  \begin{equation}
    ||\phi ^{\eps}||_{H^{1}(\Omega )}\leq C||f||_{L^{2}(\Omega )}\,,\quad ||q^{%
      \eps}||_{H^{1}(\Omega )}\leq C||f||_{L^{2}(\Omega )}\,,\quad ||p^{\eps%
    }||_{H^{1}(\Omega )}\leq C||f||_{L^{2}(\Omega )}\,,  \label{est_sol}
  \end{equation}%
  with an $\eps$-independent constant $C>0$. Moreover, we have
  \begin{equation}
    \phi^{\eps}\rightarrow \phi^{0},\,\ 
    p^{\eps}\rightarrow \phi^{0}\,\ \text{and}\quad 
    q^{\eps}\rightarrow
    0\quad \text{in}\quad H^{1}(\Omega )\text{ as }\eps\rightarrow 0\,,  \label{wconv_sol}
  \end{equation}%
  where $\phi^{0}\in \mathcal{G}$ is the unique solution of the Limit model (\ref{eq:Jv9a}).
\end{thm}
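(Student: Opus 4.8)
The plan is to prove Theorem~\ref{thm_EX} in three movements: existence and uniqueness at fixed $\eps$, then the $\eps$-uniform a priori bounds \eqref{est_sol}, then the passage to the limit \eqref{wconv_sol}. Throughout, the guiding principle is that \eqref{eq:Ji8a} is nothing but the $P$-problem \eqref{eq:Ja8a} rewritten through the decomposition $\phi=p+q$.

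\emph{Existence and uniqueness.} Rather than attacking the coupled system \eqref{eq:Ji8a} directly (its bilinear form has a saddle-point structure and is not manifestly coercive on $\mathcal G\times\mathcal A$), I would exploit this equivalence. Starting from the unique $\phi^\eps\in\mathcal V$ given by Lax--Milgram, set $p^\eps:=P\phi^\eps$ and $q^\eps:=Q\phi^\eps$; testing \eqref{eq:Ja8a} with $\psi=\eta\in\mathcal G$ (so that $a_{||}(\,\cdot\,,\eta)=0$) and with $\psi=\xi\in\mathcal A$, and using $\nabla_{||}p^\eps=0$, reproduces exactly the two lines of \eqref{eq:Ji8a}. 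Conversely, given a solution $(p^\eps,q^\eps)\in\mathcal G\times\mathcal A$, decomposing an arbitrary $\psi\in\mathcal V$ as $\psi=\eta+\xi$ with $\eta=P\psi,\ \xi=Q\psi$, multiplying the first equation of \eqref{eq:Ji8a} by $\eps$ and adding it to the second recovers \eqref{eq:Ja8a} for $\phi^\eps=p^\eps+q^\eps$. Since the splitting $\mathcal V=\mathcal G\oplus^\perp\mathcal A$ of Hypothesis B is unique, this yields a bijection between solutions of \eqref{eq:Ji8a} and of \eqref{eq:J07a}, whence the claim.

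\emph{Uniform bounds.} This is the step where uniformity in $\eps$ must be earned, and where I expect the main difficulty. Testing \eqref{eq:Ja8a} with $\psi=\phi^\eps$ and invoking the ellipticity bounds of Hypothesis A gives the master identity $a_{||}(\phi^\eps,\phi^\eps)+\eps\,a_\perp(\phi^\eps,\phi^\eps)=\eps(f,\phi^\eps)$, hence
\[ A_0\|\nabla_{||}\phi^\eps\|_{L^2}^2\le\eps\|f\|_{L^2}\|\phi^\eps\|_{L^2}\,,\qquad A_0\|\nabla_\perp\phi^\eps\|_{L^2}^2\le\|f\|_{L^2}\|\phi^\eps\|_{L^2}\,. \]
The crucial point is that the perpendicular estimate carries \emph{no} factor $\eps$. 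Since $\eps<1$, summing the two bounds controls $\|\nabla\phi^\eps\|_{L^2}^2$ by $\tfrac{2}{A_0}\|f\|_{L^2}\|\phi^\eps\|_{L^2}$, and the Poincaré--Friedrichs inequality on $\mathcal V$ (available because $\overset{\circ}{\partial\Omega_D}\neq\varnothing$) converts this into $\|\phi^\eps\|_{L^2}\le C\|f\|_{L^2}$, closing the estimate for $\|\phi^\eps\|_{H^1}$ with an $\eps$-independent $C$. The naive alternative of testing the two equations of \eqref{eq:Ji8a} separately fails, because the cross term $a_\perp(p^\eps,q^\eps)$ brings in a constant of size $A_1^2/A_0\ge A_0$ that cannot be absorbed by the coercivity constant; this is exactly where Hypothesis B enters. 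To transfer the bound from $\phi^\eps$ to the individual parts I would simply use the $\eps$-independent $H^1$-continuity of $P$: $\|p^\eps\|_{H^1}=\|P\phi^\eps\|_{H^1}\le C\|\phi^\eps\|_{H^1}$ and $\|q^\eps\|_{H^1}\le(1+C)\|\phi^\eps\|_{H^1}$, giving \eqref{est_sol}.

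\emph{Convergence.} The uniform bounds furnish weak $H^1$-limits along a subsequence, $\phi^\eps\rightharpoonup\phi^*$, and since $\|\nabla_{||}\phi^\eps\|_{L^2}^2=O(\eps)$ forces $\nabla_{||}\phi^*=0$, the limit lies in $\mathcal G$. Passing to the limit in \eqref{eq:Jn8a} -- which holds $\eps$-uniformly -- identifies $\phi^*$ as the unique solution $\phi^0$ of \eqref{eq:Jv9a}, and uniqueness of the limit upgrades subsequential to full convergence. For strong convergence I would divide the master identity by $\eps$ to obtain $\eps^{-1}a_{||}(q^\eps,q^\eps)+a_\perp(\phi^\eps,\phi^\eps)=(f,\phi^\eps)$; dropping the nonnegative first term and letting $\eps\to0$ gives $\limsup a_\perp(\phi^\eps,\phi^\eps)\le(f,\phi^0)=a_\perp(\phi^0,\phi^0)$, while weak lower semicontinuity of the quadratic form $a_\perp$ supplies the matching $\liminf$. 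Convergence of energies plus weak convergence then yields $a_\perp(\phi^\eps-\phi^0,\phi^\eps-\phi^0)\to0$, i.e. $\nabla_\perp\phi^\eps\to\nabla_\perp\phi^0$ strongly in $L^2$; together with $\nabla_{||}\phi^\eps\to0=\nabla_{||}\phi^0$ this gives $\phi^\eps\to\phi^0$ in $H^1$. Finally the $H^1$-continuity of $P$ propagates this to $p^\eps=P\phi^\eps\to P\phi^0=\phi^0$ and $q^\eps=\phi^\eps-p^\eps\to0$ in $H^1$, establishing \eqref{wconv_sol}. I expect the delicate point throughout to be the careful bookkeeping of which estimates carry a factor $\eps$ and which do not, since that is precisely what separates a genuinely uniform bound from a useless one.
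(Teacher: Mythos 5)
Your proposal is correct, and its first two movements (existence/uniqueness via the bijection $\phi^\eps\leftrightarrow(p^\eps,q^\eps)$ induced by Hypothesis~B, and the uniform bounds via the standard elliptic estimate for $\phi^\eps$ followed by the $H^1$-continuity of $P$) coincide with the paper's argument. Where you genuinely diverge is the strong convergence step. The paper introduces the error $e^\eps=p^\eps-p^0$, combines the two equations of (\ref{eq:Ji8a}) with the test functions $e^\eps$ and $q^\eps/\eps$ to obtain the identity (\ref{aa1}), and then absorbs $(f,q^\eps)$ using the Poincar\'e--Wirtinger inequality (\ref{PoinW}) together with a Young inequality, concluding from the sign of the left-hand side and the weak convergence $q^\eps\rightharpoonup 0$. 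You instead run the classical ``weak convergence plus convergence of energies implies strong convergence'' argument directly on $\phi^\eps$: the master identity gives $\limsup a_\perp(\phi^\eps,\phi^\eps)\le(f,\phi^0)=a_\perp(\phi^0,\phi^0)$, weak lower semicontinuity gives the matching $\liminf$, and the quadratic expansion of $a_\perp(\phi^\eps-\phi^0,\cdot)$ finishes. Your route is more elementary and bypasses (\ref{PoinW}) in this step entirely (Hypothesis~B is still needed for the continuity of $P$); the paper's route has the side benefit that the intermediate identity (\ref{aa1}) is reused verbatim as the starting point of the proof of Proposition~\ref{lamBoundLem} bounding the Lagrange multiplier $\lambda^\eps$. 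One minor caveat: your parenthetical claim that testing the two equations of (\ref{eq:Ji8a}) separately ``fails'' is misleading --- multiplying the first tested equation by $\eps$ and adding it to the second reconstitutes exactly the master identity, with the cross terms combining into a perfect square --- but this aside plays no role in your proof.
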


\begin{proof}
  The existence and uniqueness of a solution for the P-problem as well
  as L-problem are consequences of the Lax-Milgram theorem. The
  existence and uniqueness of a solution of (\ref{eq:Ji8a}) is then
  immediate by construction, remarking that the decomposition $\phi
  ^{\eps}=p^{\eps}+q^{\eps%
  }$ is unique.\newline The bound $||\phi ^{\eps}||_{H^{1}(\Omega
    )}\leq C||f||_{L^{2}(\Omega )}$ is obtained by a standard elliptic
  argument. Furthermore, $p^{\eps}=P\phi ^{\eps%
  }$ where $P$ is the $L^{2}$-orthogonal projector on $\mathcal{G}$,
  which is a bounded operator in $\mathcal{V}$ by (\ref{reg}). This
  implies the estimates for $p^{\eps}$ and $q^{\eps}$ in
  (\ref{est_sol}). Since $p^{\eps} \in \mathcal{G}$ and $q^{\eps} \in
  \mathcal{A}$ are bounded, there exist subsequences $p^{\eps_{n}}$
  and $q^{\eps_{n}}$ that weakly converge for $\varepsilon
  _{n}\rightarrow 0$ to some $p^{0}\in \mathcal{G}$ and $q^{0}\in
  \mathcal{A}$. Taking $%
  \varepsilon =\varepsilon _{n}$ in (\ref{eq:Ji8a}) and passing to the
  limit $%
  \varepsilon _{n}\rightarrow 0$ we identify $(p^{0},q^{0})$ with the
  unique solution of (\ref{sy}), i.e. $p^{0}=\phi^0$ is the unique solution
  of (\ref{eq:Jv9a}%
  ) and $q^{0}\equiv 0$. Since the limit does not depend on the choice
  of the subsequence, we have the weak convergence as $\varepsilon
  \rightarrow 0$, i.e.
\begin{equation*}
p^{\eps}\rightharpoonup _{\eps\rightarrow 0}p^{0}\quad \text{in}\quad
H^{1}(\Omega )\,,\quad q^{\eps}\rightharpoonup _{\eps\rightarrow 0}0\quad 
\text{in}\quad H{^{1}(\Omega )}\,.
\end{equation*}%
We shall prove now that these convergences are actually strong. Introducing $%
e^{\varepsilon }=p^{\varepsilon }-p^{0}$, we have%
\begin{equation*}
a_{\perp}(e^{\varepsilon },\eta )+a_{\perp}(q^{\varepsilon },\eta )=0,~\forall \eta
\in \mathcal{G}\,.
\end{equation*}%
Taking now $\eta =e^{\varepsilon }$ in this relation and adding it to the second
equation in (\ref{eq:Ji8a}), where we put $\xi =q^{\varepsilon }/\varepsilon $, yields%
\begin{equation}
\frac{1}{\varepsilon }a_{||}(q^{\varepsilon },q^{\varepsilon
})+a_{\perp}(q^{\varepsilon }+e^{\varepsilon },q^{\varepsilon }+e^{\varepsilon
})=(f,q^{\varepsilon })-a_{\perp}(p^{0},q^{\varepsilon }).  \label{aa1}
\end{equation}%
Due to the Poincar\'e-Wirtinger equation (\ref{PoinW}), there exist a constant $C>0$ such that
\begin{equation}\label{wirta}
||q||_{L^{2}(\Omega )}\leq Ca_{||}(q,q)^{1/2}\,,\quad \forall q\in \mathcal{A}\,.
\end{equation}%
In combination with a Young inequality this gives $(f,q^{\varepsilon })\leq
||f||_{L^{2}}||q^{\varepsilon }||_{L^{2}}\leq \varepsilon \frac{C^{2}}{2}%
||f||_{L^{2}}^{2}+\frac{1}{2\varepsilon }a_{||}(q^{\epsilon },q^{\epsilon })$%
. Using this in the right hand side of (\ref{aa1}), we arrive at 
\begin{equation*}
\frac{1}{2\varepsilon }a_{||}(q^{\varepsilon },q^{\varepsilon
})+a_{\perp}(q^{\varepsilon }+e^{\varepsilon },q^{\varepsilon }+e^{\varepsilon
})\leq \varepsilon \frac{C^{2}}{2}||f||_{L^{2}}^{2}
-a_{\perp}(p^{0},q^{\varepsilon }).
\end{equation*}

Noting that $q^{\varepsilon }+e^{\varepsilon }=\phi ^{\varepsilon }-p^{0}$
and $\nabla _{\Vert }e^{\varepsilon }=0$ we can rewrite this last inequality
as 
\begin{equation*}
\frac{1}{2\varepsilon }a_{||}(\phi ^{\varepsilon }-p^{0},\phi ^{\varepsilon
}-p^{0})+a_{\perp}(\phi ^{\varepsilon }-p^{0},\phi ^{\varepsilon }-p^{0})\leq
\varepsilon \frac{C^{2}}{2}||f||_{L^{2}}^{2} -a_{\perp}(p^{0},q^{\varepsilon }).
\end{equation*}%
Since $a_{\perp}(p^{0},q^{\varepsilon })\rightarrow 0$ as $\varepsilon
\rightarrow 0$ (thanks to the weak convergence $q^{\eps}\rightharpoonup 0$) we
observe that $\phi ^{\varepsilon }\rightarrow p^{0}$ strongly in $%
H^{1}(\Omega )$. Reminding again that $p^{\varepsilon }=P\phi ^{\varepsilon }
$ and $P$ is bounded in the norm of $H^{1}(\Omega )$, we obtain also $%
p^{\varepsilon }\rightarrow Pp^{0}=p^{0}$, which entails $q^{\varepsilon
}\rightarrow 0$. 
\end{proof}

\bigskip 
\color{black}
\begin{rem}\label{remark:UniformB}
Let us return to the simple special case discussed in remark~\ref{remark:limit:UniformB}, i.e. $\Omega =(0,L_x) \times
(0,L_z)$ and the $b$-field given by (\ref{eq:Jy9a}).
Remind that the space $\mathcal G$ can be identified in this case with the space of 
functions constant along the $Z$-axis, which means $\mathcal G := \{ \phi \in
{\cal V} \,\, / \,\, \partial_z \phi =0 \}$. The space $\mathcal A$ is
orthogonal (with respect to the $L^2$-norm) to $\mathcal G$ and thus
contains the functions that have zero mean value along the $Z$-axis,
i.e. $\mathcal A := \{ \phi \in {\cal V} \,\, / \,\, \int_0^{L_z}
\phi(x,z)\, dz =0\}$. Therefore, for $\phi^\eps = p^\eps + q^\eps \in
{\cal V}$, the function $p^{\varepsilon }$ is the mean value of
$\phi^{\varepsilon }$ in the direction of the field $b$:
\begin{gather}
  p^{\varepsilon } = \frac{1}{L_z}\int_0^{L_z} \phi^{\varepsilon } dz\,,
  \label{eq:Jz9a}
\end{gather}
and $q^{\varepsilon }$ is the fluctuating part with zero mean
value:
\begin{gather}
  q^{\varepsilon } = \phi^{\varepsilon } - \frac{1}{L_z}\int_0^{L_z} \phi^{\varepsilon } dz
  \label{eq:J29a}
  .
\end{gather}
Hypothesis B is thus easily verified.  The results obtained in this
special case were presented in a previous paper \cite{DDN}. In the
case of an arbitrary $b$-field, formula (\ref{eq:Jz9a}) is generalized
as (\ref{pdef}) in Appendix \ref{appA}, where the length element along
the $b$-field line is weighted by the infinitesimal cross-sectional
area of the field tube around the considered $b$-field-line. This
formula can be thus interpreted as a consequence of the co-area
formula. Note that in the special case of a uniform anisotropy
direction, the limit problem can easily be formulated as an elliptic
problem depending on the only transverse coordinates (see
equation~\eqref{eq:limit:uniformB}). The size of the 
problem is thus significantly smaller than that of the initial
one. This feature still occurs for non-uniform $b$-fields as long as
adapted coordinates and meshes are used. In our case, aligned and
transverse coordinates are not at our disposal and the solution of the
limit problem must be searched as a function of the whole set of
coordinates.
\end{rem}
\color{black}

\subsection{Lagrange multiplier space }
The objective of this work is the numerical solution of system
(\ref{eq:Ji8a}) and the comparison of the obtained results with those
obtained by directly solving the original problem (\ref{eq:J07a}).  In
a general case, when the field $b$ is not necessarily constant, the
discretization of the subspaces $\mathcal G$ and $\mathcal A$, is not
straightforward, as in the simpler case \cite{DDN}. In order to
overcome this difficulty a Lagrange multiplier technique will be used.

\subsubsection{The $\mathcal A$ space}
To avoid the use of the constrained space $\mathcal A$, we can remark
that $\cal{A}$ can be characterized as being the orthogonal complement
(in the $L^2$ sense) of the $\cal{G}$-space. Thus, instead of
(\ref{eq:Ji8a}), the slightly changed system will be solved: find
$(p^{\varepsilon }, q^{\varepsilon }, l^{\varepsilon }) \in \mathcal G
\times \mathcal V \times \mathcal G$ such that
\begin{gather}
  \left\{
    \begin{array}{ll}
      \displaystyle
      a_{\perp} (p^{\varepsilon },\eta ) + a_{\perp} (q^{\varepsilon},\eta )
      = (f,\eta )
      & \forall \eta \in \mathcal G, \\[3mm]
      \displaystyle
      a_{||} (q^{\varepsilon },\xi ) + \varepsilon a_{\perp}
      (q^{\varepsilon},\xi) + \varepsilon a_{\perp} (p^{\varepsilon},\xi)
      + \left( l^{\varepsilon } , \xi \right)
      = \varepsilon (f, \xi ) 
      & \forall \xi \in \mathcal V , \\[3mm]
      \displaystyle
      \left( q^{\varepsilon } , \chi \right) =0
      & \forall \chi \in \mathcal G.
    \end{array}
  \right.
  \label{eq:Jj8a}
\end{gather}
The constraint $( q^{\varepsilon } , \chi ) =0$, $\forall \chi \in
\mathcal G$ is forcing the solution $q^{\varepsilon }$ to belong to
$\mathcal A$, and this property is carried over to the limit $\eps \rightarrow 0$. We have thus
circumvented the difficulty of discretizing $\mathcal A$ by introducing
a new variable and enlarging the linear system.

\begin{prop}\label{lem:AP-aquiv1}
  Problems (\ref{eq:Ji8a}) and (\ref{eq:Jj8a}) are
  equivalent. Indeed, $(p^\eps,q^\eps) \in \mathcal{G} \times
  \mathcal{A}$ is the unique solution of (\ref{eq:Ji8a}) if and only
  if $(p^\eps,q^\eps,l^\eps) \in {\cal G} \times {\cal V} \times {\cal
    G}$ with $l^\eps \equiv 0$ is the unique solution of
  (\ref{eq:Jj8a}).
\end{prop}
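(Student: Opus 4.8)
The plan is to prove the stated equivalence by setting up an explicit bijection between solutions of the two systems, relying on the $L^2$-orthogonal decomposition $\mathcal V = \mathcal G \oplus^{\perp} \mathcal A$ of Hypothesis B together with the single structural fact that $\nabla_{||}\eta = 0$, hence $a_{||}(\cdot,\eta)=0$, for every $\eta\in\mathcal G$. Existence and uniqueness for (\ref{eq:Ji8a}) are already available (Theorem \ref{thm_EX}), so once the correspondence is in place the conclusions for (\ref{eq:Jj8a}) follow automatically.

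For the direction ``(\ref{eq:Ji8a}) $\Rightarrow$ (\ref{eq:Jj8a})'', I would take the solution $(p^\eps,q^\eps)\in\mathcal G\times\mathcal A$ and verify that the triple $(p^\eps,q^\eps,0)$ solves (\ref{eq:Jj8a}). The first equation of (\ref{eq:Jj8a}) is verbatim the first equation of (\ref{eq:Ji8a}), and the constraint $(q^\eps,\chi)=0$ for all $\chi\in\mathcal G$ is just the statement $q^\eps\in\mathcal A$. The only point needing argument is that the second equation, with $l^\eps=0$, now holds for all $\xi\in\mathcal V$ and not merely for $\xi\in\mathcal A$. Using linearity I would split $\xi=P\xi+Q\xi$: on $Q\xi\in\mathcal A$ the identity is exactly the second equation of (\ref{eq:Ji8a}); on $P\xi\in\mathcal G$ the parallel term $a_{||}(q^\eps,P\xi)$ drops out, and what survives, after dividing by $\eps$, is precisely the first equation of (\ref{eq:Ji8a}) tested against $\eta=P\xi$. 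Adding the two contributions gives the identity on all of $\mathcal V$.

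For the converse ``(\ref{eq:Jj8a}) $\Rightarrow$ (\ref{eq:Ji8a})'', I would start from a solution $(p^\eps,q^\eps,l^\eps)$. The third equation forces $q^\eps\in\mathcal A$. Restricting the second equation to $\xi\in\mathcal A$ annihilates the multiplier term, since $l^\eps\in\mathcal G$ is $L^2$-orthogonal to $\mathcal A$; combined with the unchanged first equation this recovers (\ref{eq:Ji8a}) for the pair $(p^\eps,q^\eps)$. The remaining and genuinely decisive step is to show $l^\eps\equiv 0$: I would test the second equation against $\xi=\chi\in\mathcal G$, use $a_{||}(q^\eps,\chi)=0$, and subtract $\eps$ times the first equation to cancel the $\eps a_\perp$ and $\eps(f,\cdot)$ terms, which leaves $(l^\eps,\chi)=0$ for all $\chi\in\mathcal G$; taking $\chi=l^\eps\in\mathcal G$ then yields $(l^\eps,l^\eps)=0$, i.e. $l^\eps=0$.

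I do not expect a serious obstacle: the whole argument is algebraic once $\mathcal V=\mathcal G\oplus^{\perp}\mathcal A$ is exploited, and the sole non-mechanical point is the vanishing of the Lagrange multiplier, which itself reduces to playing the first and second equations of (\ref{eq:Jj8a}) against each other on $\mathcal G$. Finally, uniqueness for (\ref{eq:Jj8a}) is inherited from the bijection: the first direction produces a solution from the unique solution of (\ref{eq:Ji8a}), and the second direction shows any solution of (\ref{eq:Jj8a}) must project onto that same unique pair together with $l^\eps=0$, so it is unique as well.
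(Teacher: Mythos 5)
Your proposal is correct and follows essentially the same route as the paper: the forward direction is the verification the paper calls immediate (your splitting $\xi=P\xi+Q\xi$ just makes it explicit), and the decisive step — testing the second equation of (\ref{eq:Jj8a}) on $\mathcal G$, cancelling against the first equation to get $(l^\eps,\chi)=0$ for all $\chi\in\mathcal G$, hence $l^\eps\equiv 0$ — is exactly the paper's argument.
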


\begin{proof}
  Let $(p^\eps,q^\eps) \in \mathcal{G} \times \mathcal{A}$ be the
  unique solution of (\ref{eq:Ji8a}). Then, it is immediate to show
  that $(p^\eps,q^\eps,0)$ solves (\ref{eq:Jj8a}). Let now
  $(p^\eps,q^\eps,l^\eps) \in {\cal G} \times {\cal V} \times {\cal
    G}$ be a solution of (\ref{eq:Jj8a}). Then, the last equation of
  (\ref{eq:Jj8a}) implies that $q^\eps \in \mathcal A$. Choosing in
  the second equation as test function $\xi \in \mathcal G$, one gets
$$
\varepsilon a_{\perp}
      (q^{\varepsilon},\xi) + \varepsilon a_{\perp} (p^{\varepsilon},\xi)
      + \left( l^{\varepsilon } , \xi \right)
      = \varepsilon (f, \xi )\,, \quad\forall \xi \in \mathcal G\,,
$$
which because of the first equation in (\ref{eq:Jj8a}), yields $\left( l^{\varepsilon } , \xi
\right) = 0$ for all $\xi \in \mathcal G$. Thus $l^{\eps} \equiv 0$.
\end{proof}

\subsubsection{The $\mathcal G$ space}
In order to eliminate the problems that arise when dealing with the
discretization of $\mathcal G$, the Lagrange multiplier method will
again be used. First note that
\begin{gather}
  p \in \mathcal G 
  \Leftrightarrow
  \left\{
    \begin{array}{l}
      \nabla_{||} p = 0 \\[3mm]
      p \in \mathcal V
    \end{array}
  \right.
  \;
  \Leftrightarrow
  \;
  \left\{
    \begin{array}{l}
      \ds   \int_\Omega  A_{||} \nabla_{||} p \cdot \nabla_{||} \lambda
      \, dx = a_{||}(p,\lambda)= 0, \;\;  \forall 
      \lambda \in {\cal L} \\[3mm]
      p \in \mathcal V\,,
    \end{array}
  \right.
  \label{caract}
\end{gather}
where ${\cal L}$ is a functional space that should be chosen large
enough so that one could find for any $p\in{\cal V}$ a $\lambda\in{\cal L}$ with $\nabla_{||}
\lambda=\nabla_{||} p$. On the other hand, the
space ${\cal L}$ should be not too large in order to ensure the
uniqueness of the Lagrange multipliers in the unconstrained system. A
space that satisfies these two requirements under some quite general
assumptions to be detailed later, can be defined as \be\label{Lsp}
{\cal L} := \{ \lambda \in L^2(\Omega)\,\, / \,\, \nabla_{||} \lambda
\in L^2(\Omega)\,, \,\,\, \lambda_{| \partial \Omega_{in}} =0 \} \,,
\quad \textrm{with} \quad \partial \Omega_{in} := \{ x \in \partial
\Omega \,\, / \,\, b(x) \cdot n <0\}\,. \ee

Using the characterization (\ref{caract}) of the constrained space $
\mathcal G $, we shall now reformulate the system (\ref{eq:Jj8a}) as
follows: Find
$(p^\varepsilon,\;\lambda^\varepsilon,\;q^\varepsilon,\;l^\varepsilon,\;\mu^\varepsilon)
\in \mathcal V\times \mathcal L\times \mathcal V\times \mathcal V
\times \mathcal L$ such that
\begin{gather}
  (AP)\,\,\,
  \left\{
    \begin{array}{l}
      \displaystyle
      a_{\perp} (p^\varepsilon , \eta ) +
      a_{\perp} (q^\varepsilon , \eta ) + a_{||}(\eta,\lambda^\varepsilon )
      = \left(f,\eta  \right) \,, \quad \forall \eta \in \mathcal V\,,
      \\[3mm]
      \displaystyle
a_{||}( p^{\varepsilon },\kappa)=0\,,\quad \forall \kappa \in \mathcal L\,, 
     \\[3mm]
      \displaystyle
      a_{||} (q^\varepsilon , \xi ) +
      \varepsilon a_{\perp} (q^\varepsilon , \xi ) +
      \varepsilon a_{\perp} (p^\varepsilon , \xi ) +
      \left( l^{\varepsilon } , \xi \right)
      = \varepsilon \left(f,\xi  \right)  \,, \quad \forall \xi \in \mathcal V\,,
      \\[3mm]
      \displaystyle
      \left( q^{\varepsilon } , \chi \right) +
 a_{||}(\chi, \mu^\varepsilon)=0\,, \quad \forall \chi \in \mathcal V\,,
     \\[3mm]
      \displaystyle
a_{||}(l^\varepsilon,\tau)=0\,, \quad \forall \tau \in \mathcal  L\,.
    \end{array}
  \right.
  \label{eq:Ju7a}
\end{gather}
The advantage of the above formulation, as compared to
(\ref{eq:Ji8a}), is that we only have to discretize the spaces
$\mathcal V$ and $\mathcal L$ (at the price of the introduction of
three additional variables), which is much easier than the
discretization of the constrained spaces $\mathcal G$ and $\mathcal
A$. More importantly, the dual formulation (\ref{eq:Ju7a}) does not
require any change of coordinates to express the fact that $p^\eps$ is
constant along the $b$-field lines and that $q^\eps$ averages to zero
along these lines. Therefore this formulation is particularly well
adapted to time-dependent $b$-fields, as it does not require any
operation which would have to be reinitiated as $b$ evolves. The
system (\ref{eq:Ju7a}) will be called the Asymptotic-Preserving
formulation in the sequel.

To analyse this Asymptotic-Preserving formulation, we need the
following\\

\color{black}
\noindent {\bf Hypothesis B'} {\it The trace
  $\lambda_{| \partial \Omega_{in}}$ is well defined for any
  $\lambda\in\tilde{\cal V}$ as an element of $L^2(\partial
  \Omega_{in})$, with continuous dependence of the trace norm in
  $L^2(\partial \Omega_{in})$ on $||\lambda||_{\tilde{\cal V}}$.  Moreover, the Hilbert space 
  \be \label{V_tilde}
  \tilde{\cal V}= \{ \phi \in L^2(\Omega) \,\, / \,\, \nabla_{||} \phi
  \in L^2(\Omega) \}\,, \quad (\phi,\psi)_{\tilde{\cal V}} :=
  (\phi,\psi)+(\nabla_{||} \phi,\nabla_{||} \psi)\,, 
  \ee 
  admits the decomposition 
  \be\label{eq:Jf8aa} \tilde{\cal V} = \tilde{\cal G}
  \oplus \mathcal L\,, 
  \ee 
  where $\tilde{\cal G}$ is given by
  \be \label{G_tilde} \tilde{\cal G}:=\{ \phi \in \tilde{\cal V} \,\,
  / \,\, \nabla_{||} \phi =0 \}\,, \ee and ${\cal L}$ is given by
  (\ref{Lsp}). The spaces $\tilde{\cal G}$ and $G=\tilde{\cal G}\cap {\cal V}$ are related in the following way: 
  if $g\in\tilde{\cal G}$ is such that $\int_{\partial \Omega_{in}}\eta gd\sigma=0$ for all $\eta\in{\cal G}$, then $g=0$.}\\
\color{black}

\noindent The decomposition (\ref{eq:Jf8aa}) is quite natural. It tells simply
that any function $\phi$ can be decomposed on each field line as a sum
of a function that vanishes at one given point on this line and a
constant (which is therefore the value of $\phi$ at this
point). Hypothesis B' will be thus normally satisfied in cases of
practical interest. For example, we prove in Appendix \ref{appA} that
the set of assumptions on the domain $\Omega$ and the $b$-field which
can be used to verify Hypothesis B, is also sufficient (but far from
necessary) for Hypothesis B'.  We are now able to show the relation
between systems (\ref{eq:Jj8a}) and (\ref{eq:Ju7a}).
\begin{prop}\label{lem:AP-aquiv2}
  Assuming Hypotheses A, B and B', problem (\ref{eq:Ju7a}) admits a
  unique solution\newline
  $(p^\varepsilon,\;\lambda^\varepsilon,\;q^\varepsilon,\;l^\varepsilon,\;\mu^\varepsilon)\in
  \mathcal V\times \mathcal L\times \mathcal V\times \mathcal V \times
  \mathcal L$, where $(p^{\varepsilon }, q^{\varepsilon
  },l^{\varepsilon })\in \mathcal G\times \mathcal V\times \mathcal G$
  is the unique solution of (\ref{eq:Jj8a}).
\end{prop}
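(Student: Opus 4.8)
The plan is to reduce the five-field system (\ref{eq:Ju7a}) to the already well-understood three-field system (\ref{eq:Jj8a}), and then to show that the two Lagrange multipliers $\lambda^\varepsilon$ and $\mu^\varepsilon$ are uniquely determined by the remaining data. First I would extract the constraints hidden in (\ref{eq:Ju7a}): by the characterization (\ref{caract}), the second equation $a_{||}(p^\varepsilon,\kappa)=0$ for all $\kappa\in\mathcal L$ forces $p^\varepsilon\in\mathcal G$, and likewise the fifth equation $a_{||}(l^\varepsilon,\tau)=0$ for all $\tau\in\mathcal L$ forces $l^\varepsilon\in\mathcal G$. Next, the third equation of (\ref{eq:Ju7a}) is literally the second equation of (\ref{eq:Jj8a}); and restricting the first and fourth equations of (\ref{eq:Ju7a}) to test functions $\eta,\chi\in\mathcal G$, where $a_{||}(\eta,\lambda^\varepsilon)=a_{||}(\chi,\mu^\varepsilon)=0$ because $\nabla_{||}\eta=\nabla_{||}\chi=0$, yields respectively the first equation and the constraint (third) equation of (\ref{eq:Jj8a}). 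Hence $(p^\varepsilon,q^\varepsilon,l^\varepsilon)\in\mathcal G\times\mathcal V\times\mathcal G$ solves (\ref{eq:Jj8a}); by Proposition \ref{lem:AP-aquiv1} together with Theorem \ref{thm_EX} this system has a unique solution, so these three components are uniquely determined and moreover $l^\varepsilon\equiv 0$.

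It remains to determine $\lambda^\varepsilon$ and $\mu^\varepsilon$. With $(p^\varepsilon,q^\varepsilon)$ now fixed, the first and fourth equations of (\ref{eq:Ju7a}) read, for all $\eta,\chi\in\mathcal V$,
\begin{equation*}
a_{||}(\eta,\lambda^\varepsilon)=F_1(\eta):=(f,\eta)-a_\perp(p^\varepsilon,\eta)-a_\perp(q^\varepsilon,\eta),\qquad a_{||}(\chi,\mu^\varepsilon)=F_2(\chi):=-(q^\varepsilon,\chi).
\end{equation*}
Both $F_1$ and $F_2$ are bounded functionals on $\mathcal V$, and they vanish on $\mathcal G$: indeed $F_1|_{\mathcal G}=0$ by the first equation of (\ref{eq:Jj8a}) and $F_2|_{\mathcal G}=0$ by its third (constraint) equation. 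Thus both multiplier problems have the same abstract form: given $F\in\mathcal V'$ with $F|_{\mathcal G}=0$, find a unique $\lambda\in\mathcal L$ with $a_{||}(\eta,\lambda)=F(\eta)$ for all $\eta\in\mathcal V$.

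I would treat this as a mixed (Banach–Ne\v{c}as–Babu\v{s}ka) problem for the rectangular form $a_{||}:\mathcal V\times\mathcal L\to\RR$. Its kernel on the test side is, by (\ref{caract}), exactly $\{\eta\in\mathcal V:a_{||}(\eta,\lambda)=0\ \forall\lambda\in\mathcal L\}=\mathcal G$, so the solvability (compatibility) condition is precisely $F|_{\mathcal G}=0$, which we have just checked. Existence and uniqueness of $\lambda$ then both follow from the inf-sup condition
\begin{equation*}
\inf_{0\neq\lambda\in\mathcal L}\ \sup_{0\neq\eta\in\mathcal V}\ \frac{a_{||}(\eta,\lambda)}{\|\eta\|_{\mathcal V}\,\|\lambda\|_{\tilde{\mathcal V}}}\ \geq\ \beta\ >\ 0\,.
\end{equation*}
To verify it I would, for a given $\lambda\in\mathcal L$, construct a test function $\eta_\lambda\in\mathcal V$ whose parallel gradient coincides with $\nabla_{||}\lambda$ and which is controlled by $\|\nabla_{||}\lambda\|_{L^2}$; then $a_{||}(\eta_\lambda,\lambda)\geq A_0\|\nabla_{||}\lambda\|_{L^2}^2$, while a Poincar\'e inequality along the field lines for functions of $\mathcal L$ (which vanish on $\partial\Omega_{in}$) bounds $\|\lambda\|_{\tilde{\mathcal V}}$ by $\|\nabla_{||}\lambda\|_{L^2}$, giving the stated lower bound. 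Granting this, existence of a full solution of (\ref{eq:Ju7a}) is obtained constructively: take $(p^\varepsilon,q^\varepsilon,l^\varepsilon)$ from (\ref{eq:Jj8a}) (with $l^\varepsilon=0$) and define $\lambda^\varepsilon,\mu^\varepsilon\in\mathcal L$ from $F_1,F_2$; the second equation then holds because $p^\varepsilon\in\mathcal G$, the fifth because $l^\varepsilon=0$, and the first, third and fourth hold by construction.

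The main obstacle is precisely the inf-sup condition, i.e. the existence and uniqueness of the multipliers, and this is exactly where Hypothesis B' is indispensable. The decomposition $\tilde{\mathcal V}=\tilde{\mathcal G}\oplus\mathcal L$ is what allows a function of $\mathcal L$ (vanishing on $\partial\Omega_{in}$) to be corrected by a field-line-constant function of $\tilde{\mathcal G}$ so as to produce the lifting $\eta_\lambda\in\mathcal V$ (vanishing on $\partial\Omega_D$), while the trace clause of Hypothesis B' — that $g\in\tilde{\mathcal G}$ with $\int_{\partial\Omega_{in}}\eta g\,d\sigma=0$ for all $\eta\in\mathcal G$ must vanish — is what pins this correction down uniquely and guarantees that the null space $\mathcal L\cap\tilde{\mathcal G}$ is trivial. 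I expect the careful verification of the lifting $\eta_\lambda$ and of the field-line Poincar\'e inequality, under the geometric assumptions on $\Omega$ and $b$, to be the genuinely technical part; everything else is the bookkeeping reduction described above.
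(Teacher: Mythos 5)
Your reduction of (\ref{eq:Ju7a}) to (\ref{eq:Jj8a}) is sound and coincides with the paper's argument, as does your identification of the residual functionals $F_1,F_2$ and of the compatibility condition $F|_{\mathcal G}=0$. The gap lies in how you propose to produce the multipliers. The inf-sup condition you state, with the supremum over $\eta\in\mathcal V$ and $\|\eta\|_{\mathcal V}$ (or any $H^1$-type norm) in the denominator, is false, and the lifting on which you base its verification does not exist. If $\eta_\lambda\in\mathcal V$ had $\nabla_{||}\eta_\lambda=\nabla_{||}\lambda$, then $\eta_\lambda=\lambda+g$ with $g\in\tilde{\mathcal G}$; but a generic $\lambda\in\mathcal L$ has \emph{no} perpendicular regularity (only $\lambda,\nabla_{||}\lambda\in L^2$ are required), and a field-line-constant correction $g$ cannot supply it, so $\lambda+g\notin H^1$ in general. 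Concretely, for $b=(0,1)^t$ and $A_{||}\equiv1$ on the unit square with coordinates $(x,z)$, take $\lambda_n=z\sin(nx)\in\mathcal L$: then $\|\nabla_{||}\lambda_n\|_{L^2}\sim1$, while for any $\eta\in H^1$
\begin{equation*}
a_{||}(\eta,\lambda_n)=\int_0^1\bigl(\eta(x,1)-\eta(x,0)\bigr)\sin(nx)\,dx\;\longrightarrow\;0\quad(n\to\infty),
\end{equation*}
uniformly over $\|\eta\|_{H^1}\le1$ by compactness of the trace operator; since $\|\eta\|_{H^1}\le C_\varepsilon\|\eta\|_{\mathcal V}$ on $\mathcal V$, the inf-sup constant over $\mathcal V\times\mathcal L$ is zero for every fixed $\varepsilon$. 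Equivalently, the range of $\lambda\mapsto a_{||}(\cdot,\lambda)$ is not closed in $\mathcal V'$, so the abstract problem "given $F\in\mathcal V'$ vanishing on $\mathcal G$, find $\lambda\in\mathcal L$" is \emph{not} solvable for arbitrary such $F$.

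What rescues the statement, and what the paper's Lemma \ref{lemlem} exploits, is that the particular functionals $F_1(\eta)=\frac1\varepsilon a_{||}(q^\varepsilon,\eta)$ and $F_2(\eta)=-(q^\varepsilon,\eta)$ contain no perpendicular derivatives and hence extend continuously to the larger space $\tilde{\mathcal V}$. The multiplier equation is then posed with test functions in $\tilde{\mathcal V}$, where solvability is immediate: $b(u,v)=a_{||}(u,v)+\int_{\partial\Omega_{in}}uv\,d\sigma$ is an inner product on $\tilde{\mathcal V}$ by Hypothesis B', the Riesz theorem yields $\mu\in\tilde{\mathcal V}$ with $b(\eta,\mu)=F(\eta)$ for all $\eta\in\tilde{\mathcal V}$, and the decomposition $\tilde{\mathcal V}=\tilde{\mathcal G}\oplus\mathcal L$ together with the trace clause of Hypothesis B' kills the $\tilde{\mathcal G}$-component, leaving $\lambda\in\mathcal L$; restricting to $\eta\in\mathcal V\subset\tilde{\mathcal V}$ recovers equations 1 and 4 of (\ref{eq:Ju7a}). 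So the repair is not a more careful lifting into $\mathcal V$, but a change of test space to $\tilde{\mathcal V}$ combined with the observation that $F_1,F_2$ are bounded in the much weaker $\tilde{\mathcal V}$-dual norm. Your uniqueness argument for the multipliers (triviality of $\{\lambda\in\mathcal L:\ a_{||}(\eta,\lambda)=0\ \forall\eta\in\mathcal V\}$) survives, since it only needs density of $\{\nabla_{||}\eta:\eta\in\mathcal V\}$ in $L^2$ and $\tilde{\mathcal G}\cap\mathcal L=\{0\}$.
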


The proof of Proposition \ref{lem:AP-aquiv2} is based on the following two lemmas
\begin{lem} \label{lemlem0} Assume Hypothesis B' and let $p \in
  \tilde{\cal V}$ be such that $a_{||}( p, \lambda)=0$,
  $\forall\lambda \in {\cal L}$.  Then $p \in \tilde{\cal G}$.
\end{lem}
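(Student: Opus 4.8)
The plan is to exploit the direct-sum decomposition $\tilde{\cal V} = \tilde{\cal G} \oplus \mathcal L$ granted by Hypothesis B' (equation (\ref{eq:Jf8aa})) in order to produce an \emph{admissible} test function that exposes the parallel gradient of $p$. The obstacle to a one-line argument is that $p$ itself is not in general a legitimate test function: nothing in the hypotheses forces $p$ to vanish on $\partial\Omega_{in}$, so a priori $p\notin\mathcal L$, and we cannot simply set $\lambda=p$ in the relation $a_{||}(p,\lambda)=0$. The decomposition repairs exactly this defect by isolating the part of $p$ that lies in $\mathcal L$ and carries its whole parallel gradient.

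First I would decompose $p\in\tilde{\cal V}$ according to (\ref{eq:Jf8aa}), writing $p=g+\ell$ with $g\in\tilde{\cal G}$ and $\ell\in\mathcal L$. Since $\nabla_{||} g=0$ by the defining property (\ref{G_tilde}) of $\tilde{\cal G}$, this yields $\nabla_{||} p=\nabla_{||}\ell$: the parallel gradient of $p$ is entirely carried by the component $\ell\in\mathcal L$. I would then insert the now-admissible choice $\lambda=\ell$ into the hypothesis and use $\nabla_{||} g=0$ once more, obtaining
\[
0 = a_{||}(p,\ell) = \int_\Omega A_{||}\,\nabla_{||} p \cdot \nabla_{||}\ell\,dx = \int_\Omega A_{||}\,|\nabla_{||}\ell|^2\,dx\,.
\]
The coercivity lower bound $A_{||}(x)\ge A_0>0$ from Hypothesis A (see (\ref{eq:J48a1})) then forces $||\nabla_{||}\ell||_{L^2(\Omega)}=0$, hence $\nabla_{||}\ell=0$ almost everywhere in $\Omega$.

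Finally, combining $\nabla_{||} p=\nabla_{||}\ell$ with $\nabla_{||}\ell=0$ gives $\nabla_{||} p=0$, which is precisely the defining condition (\ref{G_tilde}) for membership in $\tilde{\cal G}$; since $p\in\tilde{\cal V}$ by assumption, I conclude $p\in\tilde{\cal G}$. Note that only the \emph{existence} of the splitting $p=g+\ell$ is needed, not its uniqueness or any orthogonality. The sole genuine ingredient beyond routine bookkeeping is thus the decomposition of Hypothesis B', which supplies a test function $\ell$ that is admissible while sharing the parallel gradient of $p$; everything else reduces to the standard coercivity estimate for $a_{||}$.
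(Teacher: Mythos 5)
Your proof is correct and follows essentially the same route as the paper: both use the decomposition $\tilde{\cal V}=\tilde{\cal G}\oplus\mathcal L$ from Hypothesis B' to circumvent the fact that $p$ itself need not lie in $\mathcal L$, and then conclude via the positivity of $A_{||}$ that $\nabla_{||}p=0$. The paper decomposes a generic test function $\eta$ to deduce $a_{||}(p,\eta)=0$ for all $\eta\in\tilde{\cal V}$ and then takes $\eta=p$, whereas you decompose $p$ directly; the two computations coincide term by term.
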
 
\begin{proof} Take any $\eta \in \tilde{\cal V}$ and decompose
  $\eta=\lambda +g$ with $\lambda \in {\cal L}$ and $g\in \tilde{\cal
    G}$.  We have $a_{||}( p, g)=0$, hence $a_{||}( p, \eta)=0$ for
  all $\eta \in \tilde{\cal V}$. This entails $\nabla_{||}p=0$, hence
  $p \in \tilde{\cal G}$.
\end{proof}
\begin{lem} \label{lemlem} Assume Hypothesis B' and let $F \in
  \tilde{\cal V}^*$ be such that $F( \eta)=0$ for all $\eta \in
  {\cal G}$.  Then the problem of finding $\lambda \in {\cal L}$
  such that \be\label{MvF} a_{||}( \eta, \lambda)=F(\eta)\,, \quad
  \forall \eta \in \tilde{\cal V}\,, \ee has a unique solution.
\end{lem}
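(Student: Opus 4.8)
The plan is to read (\ref{MvF}) as a non-symmetric variational problem for the pairing $a_{||}:\tilde{\cal V}\times{\cal L}\to\RR$, to reduce it by the splitting (\ref{eq:Jf8aa}) to a coercive problem on ${\cal L}$ alone, to solve that by Lax--Milgram, and finally to use the last clause of Hypothesis~B$'$ to check the compatibility that makes the solution admissible against every $\eta\in\tilde{\cal V}$. Since $a_{||}(\eta,\lambda)$ sees $\eta$ only through $\nabla_{||}\eta$, and since (\ref{eq:Jf8aa}) writes any $\eta\in\tilde{\cal V}$ as $\eta=g+\mu$ with $g\in\tilde{\cal G}$ (so $\nabla_{||}g=0$) and $\mu\in{\cal L}$, one has $a_{||}(\eta,\lambda)=a_{||}(\mu,\lambda)$. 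Hence (\ref{MvF}) is equivalent to the conjunction of (i) $a_{||}(\mu,\lambda)=F(\mu)$ for all $\mu\in{\cal L}$ and (ii) the compatibility $F(g)=0$ for all $g\in\tilde{\cal G}$, the latter being necessary because the left-hand side of (\ref{MvF}) annihilates $\tilde{\cal G}$. Together with this trivial inclusion, Lemma~\ref{lemlem0} identifies $\tilde{\cal G}$ as the entire left kernel of $a_{||}$, which is the inf--sup/closed-range way of seeing that the solvability of (\ref{MvF}) is governed precisely by (ii).

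Uniqueness follows at once from (i): the difference $\lambda$ of two solutions obeys $a_{||}(\lambda,\lambda)=0$, whence $A_0\|\nabla_{||}\lambda\|_{L^2}^2\le0$, so $\nabla_{||}\lambda=0$ and $\lambda\in\tilde{\cal G}\cap{\cal L}=\{0\}$ by the directness of (\ref{eq:Jf8aa}). For existence I would first solve (i). The form $a_{||}$ is continuous on ${\cal L}$ by (\ref{eq:J48a1}), and its coercivity with respect to $\|\cdot\|_{\tilde{\cal V}}$ is equivalent to the Poincar\'e inequality $\|\mu\|_{L^2(\Omega)}\le C\|\nabla_{||}\mu\|_{L^2(\Omega)}$ for $\mu\in{\cal L}$, i.e. for functions vanishing on the inflow boundary $\partial\Omega_{in}$. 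This inequality holds under the geometric assumptions underlying Hypotheses~B and B$'$ (Appendix~\ref{appA}), by integrating along the field lines of $b$ from $\partial\Omega_{in}$ and combining Cauchy--Schwarz with the co-area formula. Lax--Milgram then yields a unique $\lambda\in{\cal L}$ satisfying (i).

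It remains to obtain (ii) from the weaker hypothesis $F|_{\cal G}=0$. On $\tilde{\cal G}$ the norm $\|\cdot\|_{\tilde{\cal V}}$ reduces to $\|\cdot\|_{L^2(\Omega)}$, so $F|_{\tilde{\cal G}}$ is $L^2$-continuous and it suffices to prove that ${\cal G}$ is $L^2(\Omega)$-dense in $\tilde{\cal G}$; this is the content I would extract from the final clause of Hypothesis~B$'$. Indeed, for $g\in\tilde{\cal G}$ and $\eta\in{\cal G}$, both constant along the field lines, the co-area formula gives $(g,\eta)_{L^2(\Omega)}=\int_{\partial\Omega_{in}}W\,g\,\eta\,d\sigma$ with a strictly positive, bounded field-line weight $W$. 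Since $Wg$ again belongs to $\tilde{\cal G}$, applying the last clause of B$'$ to $Wg$ shows that $(g,\eta)_{L^2(\Omega)}=0$ for all $\eta\in{\cal G}$ forces $Wg=0$, hence $g=0$. This is the asserted density, so $F|_{\cal G}=0$ propagates to $F|_{\tilde{\cal G}}=0$, condition (ii) holds, and the $\lambda$ constructed above solves (\ref{MvF}).

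The step I expect to be the main obstacle is exactly this promotion of $F|_{\cal G}=0$ to $F|_{\tilde{\cal G}}=0$. The two test spaces genuinely differ, ${\cal G}\subsetneq\tilde{\cal G}$, since functions of $\tilde{\cal G}$ need neither $H^1$-regularity nor the homogeneous condition on $\partial\Omega_D$; a priori nothing forces a functional killing ${\cal G}$ to kill all of $\tilde{\cal G}$, and it is precisely the dedicated last clause of Hypothesis~B$'$, together with the co-area passage from the volume pairing to the boundary pairing on $\partial\Omega_{in}$, that bridges the gap. The coercivity/Poincar\'e estimate on ${\cal L}$ is the other genuinely analytic ingredient, but it is of a more standard nature.
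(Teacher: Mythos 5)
Your reduction of (\ref{MvF}) to the pair (i)--(ii) via the splitting (\ref{eq:Jf8aa}) is sound, and your uniqueness argument ($a_{||}(\lambda,\lambda)=0$ forces $\lambda\in\tilde{\cal G}\cap{\cal L}=\{0\}$) is exactly the intended one. But your existence proof takes a genuinely different route from the paper's. The paper neither solves a coercive problem on ${\cal L}$ nor verifies the compatibility $F|_{\tilde{\cal G}}=0$ as a separate step: it augments the form, setting $b(u,v)=a_{||}(u,v)+\int_{\partial\Omega_{in}}uv\,d\sigma$ on $\tilde{\cal V}\times\tilde{\cal V}$, solves $b(\eta,\mu)=F(\eta)$ by Riesz on all of $\tilde{\cal V}$, and only then splits the representative $\mu=\lambda+g$ with $g\in\tilde{\cal G}$; testing with $\eta\in{\cal G}$ leaves $\int_{\partial\Omega_{in}}\eta g\,d\sigma=0$, so the last clause of Hypothesis B' applies \emph{verbatim, in its boundary-pairing form}, to kill $g$. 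The whole point of that device is that no translation between the volume pairing $(g,\eta)_{L^2(\Omega)}$ and the boundary pairing is ever required.

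Your route needs exactly that translation, and this is where you consume more than the lemma's stated hypothesis. The identity $(g,\eta)_{L^2(\Omega)}=\int_{\partial\Omega_{in}}W\,g\,\eta\,d\sigma$, and the fact that $Wg\in\tilde{\cal G}$, are not contained in Hypothesis B'; they come from the field-line coordinates built under Hypothesis C in Appendix \ref{appA}. The same is true of the Poincar\'e inequality $\|\mu\|_{L^2(\Omega)}\le C\|\nabla_{||}\mu\|_{L^2(\Omega)}$ on ${\cal L}$ that you need for Lax--Milgram: it is not a formal consequence of the direct sum (\ref{eq:Jf8aa}) plus the trace continuity (the closedness of the range of $\nabla_{||}$ that it encodes cannot be extracted from those statements alone). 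So, as written, your argument proves the lemma under the Appendix \ref{appA} assumptions rather than under Hypothesis B' as stated. In fairness, the paper's Riesz step itself silently uses the equivalence of the $b$-norm with $\|\cdot\|_{\tilde{\cal V}}$, i.e. a Poincar\'e inequality with boundary term, so neither proof is fully self-contained on B'; but the augmented-form trick is precisely what lets the paper use the non-degeneracy clause of B' in the form in which it is given, and you should either adopt it or acknowledge that your density claim (${\cal G}$ dense in $\tilde{\cal G}$ for $L^2$) is an additional geometric input.
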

\begin{proof}
  Consider the bilinear form $b$ on $\tilde{\cal V}\times\tilde{\cal V}$ 
  $$
  b(u,v)=a_{||}(u,v)+\int_{\partial \Omega_{in}}uvd\sigma
  $$
  By Hypothesis B', this is an inner product on $\tilde{\cal
    V}$. Indeed, if $b(u,u)=0$ then $u\in\tilde{\cal G} \cap {\cal L}$
  so that $u=0$. Riesz representation theorem implies that the problem
  of finding $\mu \in \tilde{\cal V}$ such that
  $$
  b( \eta, \mu)=F(\eta)\,, \quad \forall \eta \in \tilde{\cal V}\,,
  $$
  has a unique solution. We can now decompose $\mu=\lambda +g$ with
  $\lambda \in {\cal L}$ and $g\in \tilde{\cal G}$. This yields
  $$
  a_{||}(\eta,\lambda)+\int_{\partial \Omega_{in}}\eta gd\sigma=F(\eta)\,,
  \quad \forall \eta \in \tilde{\cal V}\,, 
  $$
  so that, in particular, $\int_{\partial \Omega_{in}}\eta gd\sigma=0$ for all $\eta\in{\cal G}$ which implies $g=0$. We see now that $\lambda$ is a
  solution to (\ref{MvF}). The uniqueness follows easily.
\end{proof}
\newline Let us now prove Proposition \ref{lem:AP-aquiv2}.\\
\textbf{Proof of existence in Proposition \ref{lem:AP-aquiv2}.}
Take $(p^{\varepsilon }, q^{\varepsilon},l^{\varepsilon })\in \mathcal
G\times \mathcal V\times \mathcal G$  as the unique solution of
(\ref{eq:Jj8a}). Then, equations 2,3,5 in (\ref{eq:Ju7a}) are
immediately satisfied. It remains to choose properly the
Lagrange multipliers ${\lambda}^\eps, {\mu}^\eps \in {\cal L}$  to satisfy equations 1,4  in (\ref{eq:Ju7a}). For 
this, let us define $F_1, F_2 \in \tilde{\cal V}^*$ by
\be \label{NR0}
F_1(\eta):= { 1 \over \eps} a_{||} (q^\eps,\eta)\,, \quad
F_2(\eta):=-(q^\eps,\eta)\,, \quad \forall \eta \in \tilde{\cal V}\,. 
\ee
These functionals are indeed continuous in the norm of $\tilde{\cal
  V}$ since their definitions do not contain the derivatives in directions perpendicular to $b$. 
Since $F_1(\eta)=F_2(\eta)=0$ for all $\eta \in{\cal G}$,  Lemma \ref{lemlem} implies the existence of
${\lambda}^\eps \in {\cal L}$ and ${\mu}^\eps \in {\cal L}$, such that
\be \label{NR}
a_{||}( \eta, {\lambda}^\eps) =F_1(\eta)\,, \quad  a_{||}(\chi,
{\mu}^\eps ) =F_2(\chi)\,, \quad \forall \eta, \chi \in \tilde{\cal
  V}\,. 
\ee
Taking $\eta,\chi\in{\cal V} \subset \tilde{\cal V}$ we observe (cf. the second line in (\ref{eq:Jj8a}) where $l^\eps=0$)
$$
a_{||}( \eta,{\lambda}^\eps)= { 1 \over \eps} a_{||} (q^\eps,\eta)=
(f,\eta) -a_{\perp}(p^\eps,\eta)-a_{\perp}(q^\eps,\eta)\,, \quad
\forall \eta \in {\cal V}\,, 
$$
$$
a_{||}(\chi, {\mu}^\eps) = -(q^\eps,\chi)\,, \quad \forall \chi \in {\cal V}\,,
$$
which coincides with equations 1,4 in (\ref{eq:Ju7a}).\\
\textbf{Proof of uniqueness in Proposition \ref{lem:AP-aquiv2}.}  Consider the solution to system
(\ref{eq:Ju7a}) with $f=0$. Lemma \ref{lemlem0} implies then that
$p^{\varepsilon }, l^{\varepsilon }\in \tilde{\mathcal{G}}\cap\mathcal{V}=\mathcal{G}$
and
$(p^{\varepsilon }, q^{\varepsilon},l^{\varepsilon })\in \mathcal
G\times \mathcal V\times \mathcal G$ verifies (\ref{eq:Jj8a}) with
$f=0$ so that $p^{\varepsilon } = q^{\varepsilon} = l^{\varepsilon } =
0$ by Proposition \ref{lem:AP-aquiv1}. Equations 1,4 in (\ref{eq:Ju7a}) now tell us that ${\lambda}^\eps,
{\mu}^\eps \in \tilde{\cal G}$, but $\tilde{\cal G} \cap {\cal L} =
\{0\}$, hence ${\lambda}^\eps = {\mu}^\eps =0$.
\hfill$\Box$

\color{black}

\bigskip
The presence of $1/\eps$ in the formulas (\ref{NR0}), (\ref{NR}) defining $\lambda^\eps$ indicates at a first sight that $\lambda^\eps$ may tend to $\infty$
as $\eps\to 0$ which would be disastrous for an AP numerical method based on (\ref{eq:Ju7a}) at very small $\eps$. Fortunately $\lambda^\eps$ remains bounded 
uniformly in $\eps$ in the cases of practical interest. It suffices to suppose that the limit solution $\phi^0$ is in $H^2(\Omega)$ which is a reasonable assumption 
as discussed in Remark \ref{remark:limit:UniformB}. 
\begin{prop}\label{lamBoundLem}
Assume Hypotheses A, B, B' and  $\phi^0 \in H^2(\Omega)$ where $\phi^0$ is the solution to (\ref{eq:Jv9a}). Then $\lambda^\eps$ introduced in (\ref{eq:Ju7a}) satisfies
\be\label{lamBound}
||\nabla_{||}\lambda^\eps||_{L^2} \le C\max(||f||_{L^2},||\phi^0||_{H^2})
\ee
with a constant $C$ independent of $\eps$.
\end{prop}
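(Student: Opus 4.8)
The plan is to reduce everything to an improved, $O(\eps)$ estimate on the parallel gradient of the fluctuation $q^\eps$. First I would exploit the very definition of $\lambda^\eps$: from (\ref{NR0})--(\ref{NR}), $\lambda^\eps\in{\cal L}\subset\tilde{\cal V}$ satisfies $a_{||}(\eta,\lambda^\eps)=\frac1\eps a_{||}(q^\eps,\eta)$ for all $\eta\in\tilde{\cal V}$. Choosing $\eta=\lambda^\eps$ and applying the Cauchy--Schwarz inequality for the symmetric nonnegative form $a_{||}$ gives $a_{||}(\lambda^\eps,\lambda^\eps)^{1/2}\le\frac1\eps a_{||}(q^\eps,q^\eps)^{1/2}$, whence, by the coercivity and boundedness in (\ref{eq:J48a1}),
$$ ||\nabla_{||}\lambda^\eps||_{L^2}\le\frac{C}{\eps}\,||\nabla_{||}q^\eps||_{L^2}. $$
Thus it suffices to prove $||\nabla_{||}q^\eps||_{L^2}\le C\eps\max(||f||_{L^2},||\phi^0||_{H^2})$, i.e. to upgrade the crude bound $||\nabla_{||}q^\eps||_{L^2}=O(\sqrt\eps)$ (implicit in the energy estimate of Theorem \ref{thm_EX}) to $O(\eps)$.

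To obtain this, I would set $e^\eps=p^\eps-\phi^0\in{\cal G}$ and introduce the functional $G(\eta):=(f,\eta)-a_{\perp}(\phi^0,\eta)$, which vanishes on ${\cal G}$ by the Limit problem (\ref{eq:Jv9a}). Testing the second equation of (\ref{eq:Ji8a}) with $\xi=q^\eps\in{\cal A}$ and splitting $a_{\perp}(p^\eps,q^\eps)=a_{\perp}(\phi^0,q^\eps)+a_{\perp}(e^\eps,q^\eps)$ yields
$$ a_{||}(q^\eps,q^\eps)+\eps\,a_{\perp}(q^\eps,q^\eps)=\eps\,G(q^\eps)-\eps\,a_{\perp}(e^\eps,q^\eps). $$
Subtracting (\ref{eq:Jv9a}) from the first equation of (\ref{eq:Ji8a}) gives $a_{\perp}(e^\eps,\eta)=-a_{\perp}(q^\eps,\eta)$ for all $\eta\in{\cal G}$; taking $\eta=e^\eps$ and using Cauchy--Schwarz for $a_{\perp}$ shows $a_{\perp}(e^\eps,e^\eps)\le a_{\perp}(q^\eps,q^\eps)$, hence $|a_{\perp}(e^\eps,q^\eps)|\le a_{\perp}(q^\eps,q^\eps)$. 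The crucial point is that this bound makes the two perpendicular terms cancel, leaving the clean inequality
$$ a_{||}(q^\eps,q^\eps)\le\eps\,G(q^\eps). $$

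It then remains to show $G(q^\eps)\le C\max(||f||_{L^2},||\phi^0||_{H^2})\,||\nabla_{||}q^\eps||_{L^2}$, and this is where $\phi^0\in H^2(\Omega)$ enters. Since the effective perpendicular flux $A_{\perp}\nabla_{\perp}\phi^0$ is orthogonal to $b$ and lies in $H^1(\Omega)$, I would integrate by parts, using $q^\eps=0$ on $\partial\Omega_D$, to get
$$ G(q^\eps)=\int_\Omega\big(f+\nabla\cdot(A_{\perp}\nabla_{\perp}\phi^0)\big)\,q^\eps\,dx-\int_{\partial\Omega_N}(A_{\perp}\nabla_{\perp}\phi^0\cdot n)\,q^\eps\,d\sigma. $$
The volume term is bounded by $||f+\nabla\cdot(A_{\perp}\nabla_{\perp}\phi^0)||_{L^2}\,||q^\eps||_{L^2}\le C\max(||f||_{L^2},||\phi^0||_{H^2})\,||\nabla_{||}q^\eps||_{L^2}$, thanks to the Poincar\'e--Wirtinger inequality (\ref{PoinW}) applied to $q^\eps\in{\cal A}$. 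Inserting the resulting estimate of $G(q^\eps)$ into $a_{||}(q^\eps,q^\eps)\le\eps G(q^\eps)$ and using $a_{||}(q^\eps,q^\eps)\ge A_0||\nabla_{||}q^\eps||_{L^2}^2$ yields the desired $||\nabla_{||}q^\eps||_{L^2}\le C\eps\max(||f||_{L^2},||\phi^0||_{H^2})$, which combined with the first step proves (\ref{lamBound}).

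The main obstacle is the boundary term $\int_{\partial\Omega_N}(A_{\perp}\nabla_{\perp}\phi^0\cdot n)\,q^\eps\,d\sigma$, which, for a general direction field $b$, does not vanish pointwise (unlike the aligned case of Remark \ref{remark:limit:UniformB}, where $n$ is parallel to $b$ on $\partial\Omega_N$ and the flux is purely perpendicular). To control it I would invoke a trace inequality along the field lines, $||q^\eps||_{L^2(\partial\Omega_N)}\le C\,||q^\eps||_{\tilde{\cal V}}\le C'\,||\nabla_{||}q^\eps||_{L^2}$, valid for $q^\eps\in{\cal A}$ by combining (\ref{PoinW}) with the trace estimate of Hypothesis B' on $\partial\Omega_{in}$ and its symmetric analogue on the outflow part of $\partial\Omega_N$; meanwhile $||A_{\perp}\nabla_{\perp}\phi^0\cdot n||_{L^2(\partial\Omega_N)}\le C\,||\phi^0||_{H^2}$ by the standard trace theorem. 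This closes the estimate of $G(q^\eps)$ and hence the whole argument.
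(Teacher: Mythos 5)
Your proposal is correct and follows essentially the same route as the paper's own proof: both reduce the bound to showing $\|\nabla_{||}q^\eps\|_{L^2}\le C\eps\max(\|f\|_{L^2},\|\phi^0\|_{H^2})$ via the relation $a_{||}(\eta,\lambda^\eps)=\tfrac1\eps a_{||}(q^\eps,\eta)$, derive the key inequality $\tfrac1\eps a_{||}(q^\eps,q^\eps)\le(f,q^\eps)-a_\perp(\phi^0,q^\eps)$ (the paper by dropping the nonnegative square $a_\perp(q^\eps+e^\eps,q^\eps+e^\eps)$ in (\ref{aa1}), you by an equivalent Cauchy--Schwarz cancellation of the cross term), and then integrate by parts using $\phi^0\in H^2(\Omega)$, controlling the volume and boundary terms through the Poincar\'e--Wirtinger inequality (\ref{PoinW}) and the trace estimate of Hypothesis B'. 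Your explicit remark that the trace bound is also needed on the outflow part of $\partial\Omega_N$ is a point the paper passes over silently; the only cosmetic slip is that the flux appearing in the boundary integral should be $(Id-b\otimes b)A_\perp\nabla_\perp\phi^0\cdot n$ rather than $A_\perp\nabla_\perp\phi^0\cdot n$, which changes nothing in the estimate.
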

\begin{proof}
We will denote all the $\eps$-independent constants by $C$ in this proof. 
We start from relation (\ref{aa1}) in the proof of Theorem \ref{thm_EX}. Dropping the positive term $a_\perp(q^\eps+e^\eps,q^\eps+e^\eps)$ it can be rewritten as
$$
\frac 1 \eps a_{||}(q^\eps,q^\eps) \le (f,q^\eps) - a_\perp(\phi^0,q^\eps).
$$
Since $\phi^0 \in H^2(\Omega)$ we can integrate by parts in the integral defining $a_\perp(\phi^0,q^\eps)$:
\begin{align*}
-a_\perp(\phi^0,q^\eps)
&=-\int_\Omega A_\perp\nabla_\perp\phi^0\cdot\nabla_\perp q^\eps dx 
\\
&=-\int_{\partial\Omega_N} (Id-bb^t)A_\perp\nabla_\perp\phi^0\cdot n q^\eps d\sigma
+\int_\Omega (\nabla_\perp\cdot A_\perp\nabla_\perp\phi^0) q^\eps dx
\\
&\le C||\phi^0||_{H^2} \left( ||q^\eps||_{L^2(\partial\Omega_N)}+||q^\eps||_{L^2(\Omega)} \right)
\end{align*}
since $\nabla\phi^0$ has a trace on $\partial\Omega$ and its norm in $L^2(\partial\Omega_N)$ is bounded by $C||\phi^0||_{H^2}$. Thus,
$$
\frac 1 \eps ||\nabla_{||}q^\eps||^2_{L^2} \le \frac C \eps a_{||}(q^\eps,q^\eps) 
\le C||f||_{L^2} ||q^\eps||_{L^2(\Omega)} + C||\phi^0||_{H^2} \left( ||q^\eps||_{L^2(\partial\Omega_N)}+||q^\eps||_{L^2(\Omega)} \right).
$$
By Poincar\'e-Wirtinger inequality (\ref{PoinW}) (note that $Pq^\eps=0$) and by Hypothesis B' we have 
$$
\max(||q^\eps||_{L^2(\Omega)},  ||q^\eps||_{L^2(\partial\Omega_N)}) \le C ||\nabla_{||}q^\eps||_{L^2}
$$
so that
$$
\frac 1 \eps ||\nabla_{||}q^\eps||_{L^2} \le C\max(||f||_{L^2},||\phi^0||_{H^2}).
$$
This is the same as (\ref{lamBound}) since $\nabla_{||}\lambda^\eps = \frac 1 \eps \nabla_{||}q^\eps$ according to (\ref{NR0}) and (\ref{NR}).
\end{proof}

\bigskip
\begin{rem}
  The Limit model (\ref{eq:Jv9a}), reformulated using the Lagrange
  multiplier technique, now reads: Find $(\phi^0,\;\lambda^0) \in
  \mathcal V\times \mathcal L$ such that
  \begin{gather}
    (L')\,\,\,
    \left\{
      \begin{array}{lr}
        \displaystyle
        \int_\Omega  A_{\perp} \nabla_{\perp} \phi^{0}
        \cdot 
        \nabla_{\perp} \psi \, dx
        +
        \int_\Omega A_{||} \nabla_{||} \psi \cdot \nabla_{||} \lambda^{0}  \, dx
        =
        \int_\Omega f \psi \, dx
        & \forall \psi \in \mathcal V
        \\[3mm]
        \displaystyle
        \int_\Omega A_{||} \nabla_{||} \phi^{0} \cdot \nabla_{||} \kappa  \, dx =0
        & \forall \kappa \in \mathcal L\,.
      \end{array}
    \right.
    \label{eq:Jx9a}
  \end{gather}
  Problem (\ref{eq:Jx9a}) is also well posed assuming Hypotheses A, B, B' and $\phi^0 \in H^2(\Omega)$. Indeed, the uniqueness of the solution 
  to (\ref{eq:Jx9a}) can be proved in exactly the same
  manner as in the proof of Proposition \ref{lem:AP-aquiv2} above. To prove the existence of a solution,
  it suffices to take the limit $\eps\to 0$ in the first two lines of (\ref{eq:Ju7a}). Indeed, we know by Theorem \ref{thm_EX} that
  $p^\eps\to\phi^0$, the solution to (\ref{eq:Jv9a}), and $q^\eps\to 0$ in $H^1(\Omega)$. Moreover, the family $\{\nabla_{||}\lambda^\eps\}$ is bounded in the norm of $L^2(\Omega)$ 
  by Proposition \ref{lamBoundLem}. We can take therefore a weakly convergence subsequence $\{\nabla_{||}\lambda^{\eps_n}\}$ and identify its limit with $\{\nabla_{||}\lambda^0\}$
  with some $\lambda^0\in{\cal L}$ (cf. Lemma \ref{lemlem}) to see that $(\phi^0,\lambda^0)\in  \mathcal V\times \mathcal L$ solves (\ref{eq:Jx9a}).
\end{rem}

\color{black}

\section{Numerical method}\label{sec:num_met}

This section concerns the discretization of the Asymptotic Preserving
formulation (\ref{eq:Ju7a}), based on a finite element method, and the
detailed study of the obtained numerical results. The numerical
analysis of the present scheme is investigated in a forthcoming work
\cite{AJC}, 
in particular we are interested in the convergence of the
scheme, independently of the parameter $\eps >0$.\\

Let us denote by ${\cal V}_{h} \subset \mathcal{V}$ and ${\cal L}_{h}
\subset \mathcal{L}$ the finite dimensional approximation spaces,
constructed by means of appropriate numerical discretizations (see Section
\ref{Discr} and Appendix {\bf B}). We are thus looking for a discrete solution
$(p^\varepsilon_h,\;\lambda^\varepsilon_h,\;q^\varepsilon_h,\;l^\varepsilon_h,\;\mu^\varepsilon_h)
\in \mathcal V_h\times \mathcal L_h\times \mathcal V_h\times \mathcal
V_h \times \mathcal L_h$ of the following system
\begin{gather}
  \left\{
    \begin{array}{ll}
      \displaystyle
      a_{\perp} (p^\varepsilon_h , \eta ) +
      a_{\perp} (q^\varepsilon_h , \eta ) +
      a_{||} (\eta , \lambda^{\varepsilon }_h )
      = \left(f,\eta  \right) \,, \quad \forall \eta \in {\cal V}_h\,,
      \\[3mm]
      \displaystyle
      a_{||} ( p^{\varepsilon }_h ,  \kappa) =0  \,, \quad \forall \kappa \in {\cal L}_h  \,, 
      \\[3mm]
      \displaystyle
      a_{||} (q^\varepsilon_h , \xi ) +
      \varepsilon a_{\perp} (q^\varepsilon_h , \xi ) +
      \varepsilon a_{\perp} (p^\varepsilon_h , \xi ) +
      \left( l^{\varepsilon }_h , \xi \right)
      = \varepsilon \left(f,\xi  \right) \,, \quad \forall \xi \in {\cal V}_h\,,
      \\[3mm]
      \displaystyle
      \left( q^{\varepsilon }_h , \chi \right) +
      a_{||} (\chi , \mu^{\varepsilon }_h ) =0 \,, \quad \forall \chi \in {\cal V}_h\,,
      \\[3mm]
      \displaystyle
      a_{||} (l^{\varepsilon }_h , \tau ) =0 \,, \quad \forall \tau \in {\cal L}_h\,.
    \end{array}
  \right.
  \label{eq:Jt8a}
\end{gather}

Our numerical experiments indicate that the spaces ${\cal V}_{h}$ and
$\mathcal{L}_h$ can be always taken of the same type and on the same
mesh.  The only difference between these two finite element spaces
lies thus in the incorporation of boundary conditions.  In general,
let $X_h$ denote the complete finite element space (without any
restrictions on the boundary) which should be $H^1$ conforming but
otherwise arbitrarily chosen.  We define then \be\label{VH}
\mathcal{V}_h=\{v_h\in X_h/v_h|_{\partial\Omega_{D}}=0\}, \ee
\be\label{LH} \mathcal{L}_h=\{\lambda_h\in
X_h/\lambda_h|_{\partial\Omega_{in} \cup \partial\Omega_{D}}=0\}\,.
\ee While this choice of $\mathcal{V}_h$ is straight forward, the
boundary conditions in $\mathcal{L}_h$ require special
attention. Indeed, nothing in the definition (\ref{Lsp}) of space
$\mathcal{L}$ on the continuous level indicates that its elements
should vanish on $\partial\Omega_{D}$. However, this liberty on
$\partial\Omega_{D}$ is somewhat counter-intuitive. Indeed, the
Lagrange multiplier $\lambda^\eps\in\mathcal{L}$ serves to impose
$\nabla_{||}p^\eps=0$ for some function $p^\eps$ taken from the space
$\mathcal{V}$. But, for $p\in\mathcal{V}$ the trace on
$\partial\Omega_{D}$ is zero so that $\nabla_{||}p^\eps=0$ there
without the help of a Lagrange multiplier.  Of course, this argument
is not valid on the continuous level since the trace of functions in
$\mathcal{L}$ does not even necessarily exist. However, this may
become very important on the finite element level. Indeed, we provide
in Appendix \ref{appB} an example of a finite element setting without
incorporating $\lambda_h|_{\partial\Omega_{D}}=0$ into the definition
of $\mathcal{L}_h$, which leads to an ill-posed system
(\ref{eq:Jt8a}). To avoid this difficulty, we choose $\mathcal{L}_h$
as in (\ref{LH}) in all our experiments, thus obtaining well-posed
problems.

\subsection{Discretization} \label{Discr}
Let us present the discretization in a 2D case, the 3D case being a
simple generalization. The here considered computational domain
$\Omega $ is a square $\Omega = [0,1]\times [0,1]$. All simulations
are performed on structured meshes. Let us introduce the Cartesian,
homogeneous grid
\begin{gather}
  x_i = i / N_x \;\; , \;\; 0 \leq i \leq N_x \,, \quad
  y_j = j / N_y \;\; , \;\; 0 \leq j \leq N_y
  \label{eq:Jp8a},
\end{gather}
where $N_x$ and $N_y$ are positive even constants, corresponding to
the number of discretization intervals in the $x$-
resp. $y$-direction. The corresponding mesh-sizes are denoted by $h_x
>0$ resp. $h_y >0$. Choosing a $\mathbb Q_2$ finite element method
($\mathbb Q_2$-FEM), based on the following quadratic base functions

\begin{gather}
  \theta _{x_i}=
  \left\{
    \begin{array}{ll}
      \frac{(x-x_{i-2})(x-x_{i-1})}{2h_x^{2}} & x\in [x_{i-2},x_{i}],\\
      \frac{(x_{i+2}-x)(x_{i+1}-x)}{2h_x^{2}} & x\in [x_{i},x_{i+2}],\\
      0 & \text{else}
    \end{array}
  \right.\,, \quad 
  \theta _{y_j} =
  \left\{
    \begin{array}{ll}
      \frac{(y-y_{j-2})(y-y_{j-1})}{2h_y^{2}} & y\in [y_{j-2},y_{j}],\\
      \frac{(y_{j+2}-y)(y_{j+1}-y)}{2h_y^{2}} & y\in [y_{j},y_{j+2}],\\
      0 & \text{else}
    \end{array}
  \right.
  \label{eq:Js8a1}
\end{gather}
for even $i,j$  and
\begin{gather}
  \theta _{x_i}=
  \left\{
    \begin{array}{ll}
      \frac{(x_{i+1}-x)(x-x_{i-1})}{h_x^{2}} & x\in [x_{i-1},x_{i+1}],\\
      0 & \text{else}
    \end{array}
  \right.\,, \quad 
  \theta _{y_j} =
  \left\{
    \begin{array}{ll}
      \frac{(y_{j+1}-y)(y-y_{j-1})}{h_y^{2}} & y\in [y_{j-1},y_{j+1}],\\
      0 & \text{else}
    \end{array}
  \right.
  \label{eq:Js8a2}
\end{gather}
for odd $i,j$, we define
$$
X_h := \{ v_h = \sum_{i,j} v_{ij}\,  \theta_{x_i} (x)\,  \theta_{y_j}(y)\}\,,
$$
We then search for discrete solutions $(p^\varepsilon_h,\;q^\varepsilon_h,\;l^\varepsilon_h)
\in \mathcal V_h\times \mathcal V_h\times \mathcal
V_h$  and $(\lambda^\varepsilon_h,\;\mu^\varepsilon_h)
\in \mathcal L_h\times  \mathcal L_h$ with
${\cal V}_h $ and ${\cal L}_h $ defined by (\ref{VH}) and (\ref{LH}).
This leads  to the inversion of a linear system, the corresponding
matrix being non-symmetric and given by
\begin{gather}
  A = 
  \left(
    \begin{array}{ccccc}
      A_1    & A_0 & A_1 & 0 & 0 \\
      A_0 & 0 & 0 & 0 & 0 \\
      \varepsilon A_1 & 0 & A_0 +\varepsilon A_1& C & 0 \\
      0 & 0 & C & 0 & A_0\\
      0 & 0 & 0 & A_0& 0
    \end{array}
  \right)
  \label{eq:Jo8a}
  .
\end{gather}
The sub-matrices $A_0$, $A_1$ resp. $C$ correspond to the bilinear
forms $a_{||}(\cdot,\cdot)$, $a_{\perp}(\cdot,\cdot)$ resp. $(\cdot,\cdot)$,
used in equations (\ref{eq:Ju7a}) and belong to $\mathbb
R^{(N_x+1)(N_y+1)\times (N_x+1)(N_y+1)}$. The matrix elements are
computed using the 2D Gauss quadrature formula, with 3 points in the
$x$ and $y$ direction:
\begin{gather}
  \int_{-1}^{1}\int_{-1}^{1}f (x,y) =
  \sum_{i,j=-1}^{1} \omega _{i}\omega _{j} f (x_i,y_j)\,,
  \label{eq:Jk9a}
\end{gather}
where $x_0=y_0=0$, $x_{\pm 1}=y_{\pm 1}=\pm\sqrt {\frac{3}{5}}$,
$\omega _0 = 8/9$ and $\omega _{\pm 1} = 5/9$, which is exact for
polynomials of degree 5.

\subsection{Numerical Results}\label{sec:test case}
\subsubsection{2D test case,  uniform and aligned $b$-field}
In this section we compare the numerical results obtained via the
$\mathbb Q_2$-FEM, by discretizing the Singular Perturbation
model (\ref{eq:J07a}), the Limit model (\ref{eq:Jv9a}) and the
Asymptotic Preserving reformulation (\ref{eq:Ju7a}). In all numerical
tests we set $A_\perp = Id$ and $A_\parallel = 1$. We start with a
simple test case, where the analytical solution is known. Let the
source term $f$ be given by
\begin{gather}
  f = \left(4 + \varepsilon  \right) \pi^{2} \cos \left( 2\pi x\right)
  \sin \left(\pi y \right) +
  \pi^{2} \sin \left(\pi y \right)
  \label{eq:J87a}
\end{gather}
and the $b$ field be aligned with the $x$-axis. Hence, the solution
$\phi^{\varepsilon }$ of (\ref{eq:J07a}) and its decomposition
$\phi^{\varepsilon }=p^{\varepsilon } + q^{\varepsilon }$ write
\begin{gather}
  \phi^{\varepsilon } = \sin \left(\pi y \right) + \varepsilon \cos \left( 2\pi x\right)
  \sin \left(\pi y \right), \\
  p^{\varepsilon } = \sin \left(\pi y \right)\,, \quad 
  q^{\varepsilon } = \varepsilon \cos \left( 2\pi x\right)
  \sin \left(\pi y \right)
  \label{eq:J97a}.
\end{gather}

We denote by $\phi _P$, $\phi _L$, $\phi _A$ the numerical solution of
the Singular Perturbation model (\ref{eq:J07a}), the Limit model
(\ref{eq:Jv9a}) and the Asymptotic Preserving reformulation
(\ref{eq:Ju7a}) respectively. The comparison will be done in the
$L^{2}$-norm as well as the  $H^{1}$-norm. The linear systems obtained after
discretization of the three methods are solved using the same
numerical algorithm --- LU decomposition implemented in a solver
MUMPS\cite{MUMPS}.

\def\xxxa{0.45\textwidth}
\begin{figure}[!ht] 
  \centering
  \subfigure[$L^{2}$ error for a grid with $50\times 50$ points.]
  {\includegraphics[angle=-90,width=\xxxa]{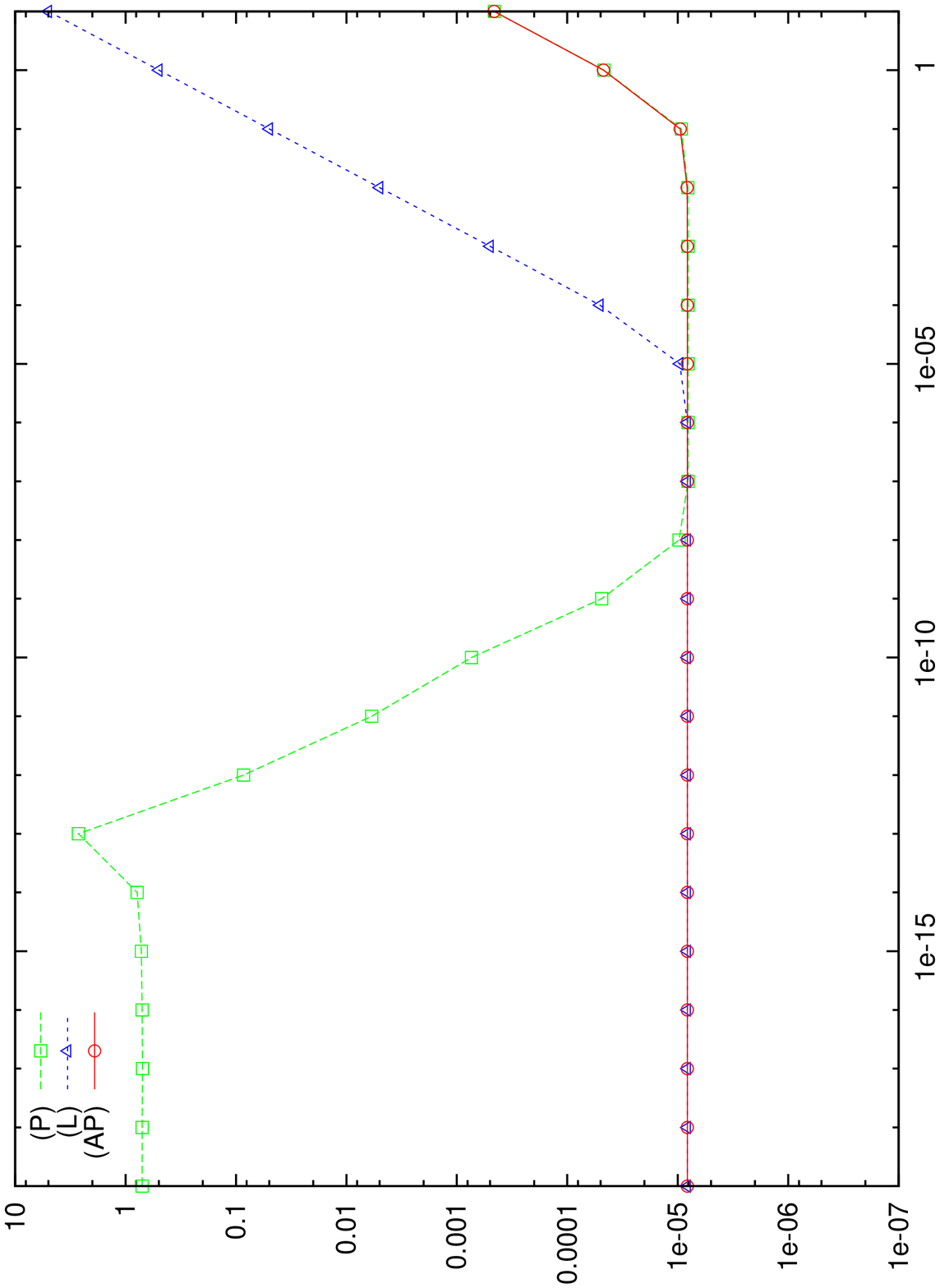}}
  \subfigure[$H^{1}$ error for a grid with $50\times 50$ points.]
  {\includegraphics[angle=-90,width=\xxxa]{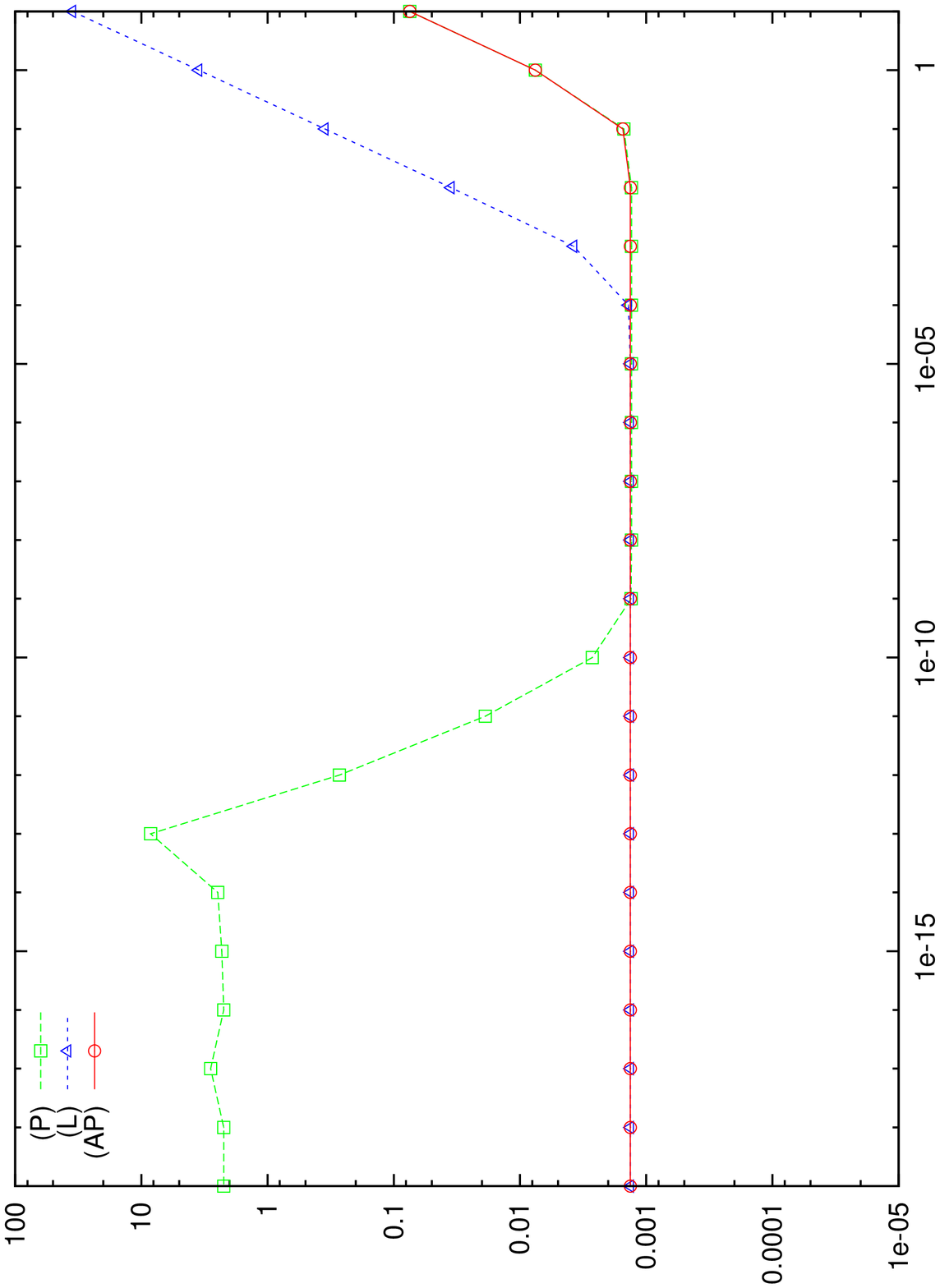}}

  \subfigure[$L^{2}$ error for a grid with $100\times 100$ points.]
  {\includegraphics[angle=-90,width=\xxxa]{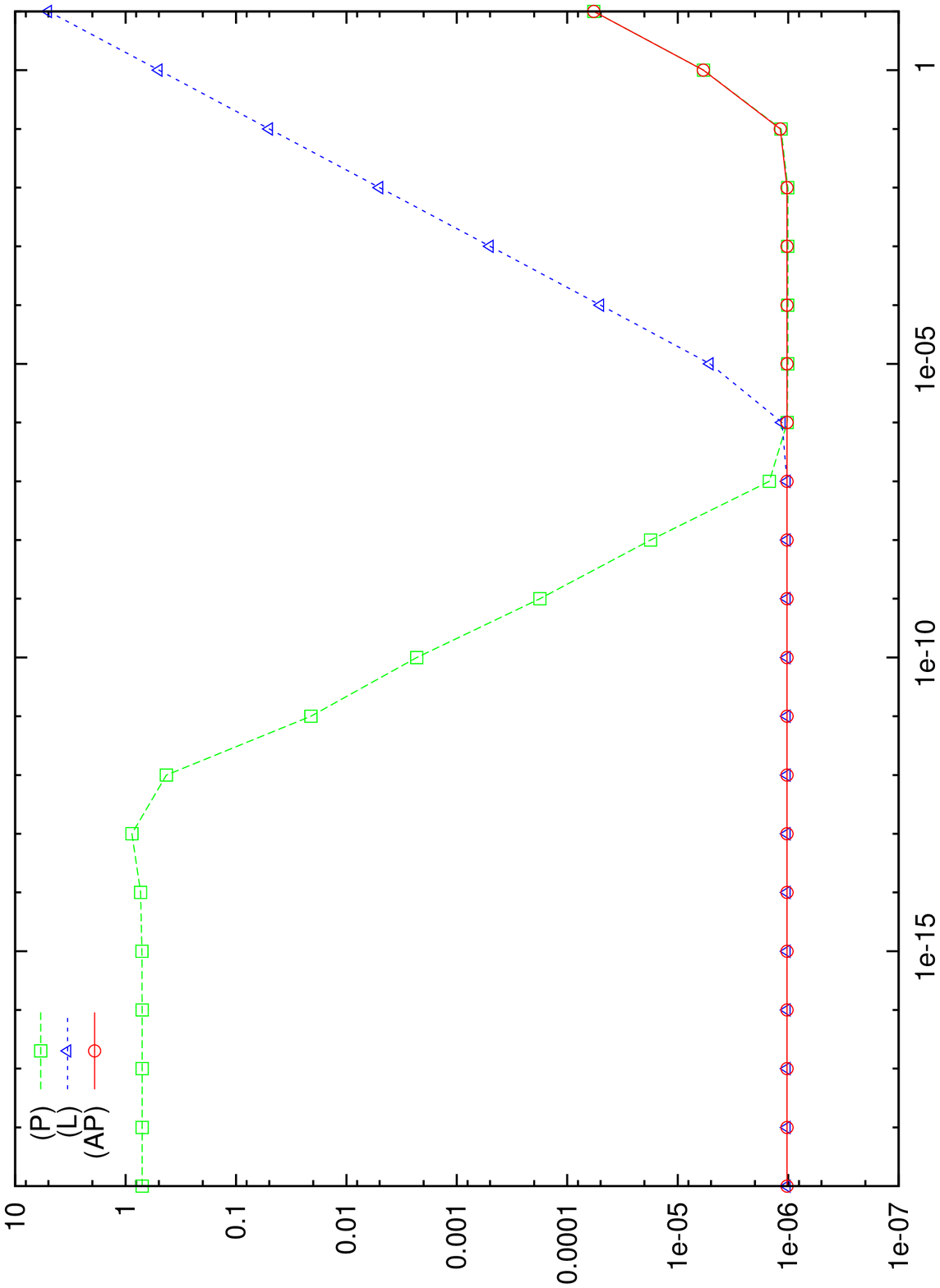}}
  \subfigure[$H^{1}$ error for a grid with $100\times 100$ points.]
  {\includegraphics[angle=-90,width=\xxxa]{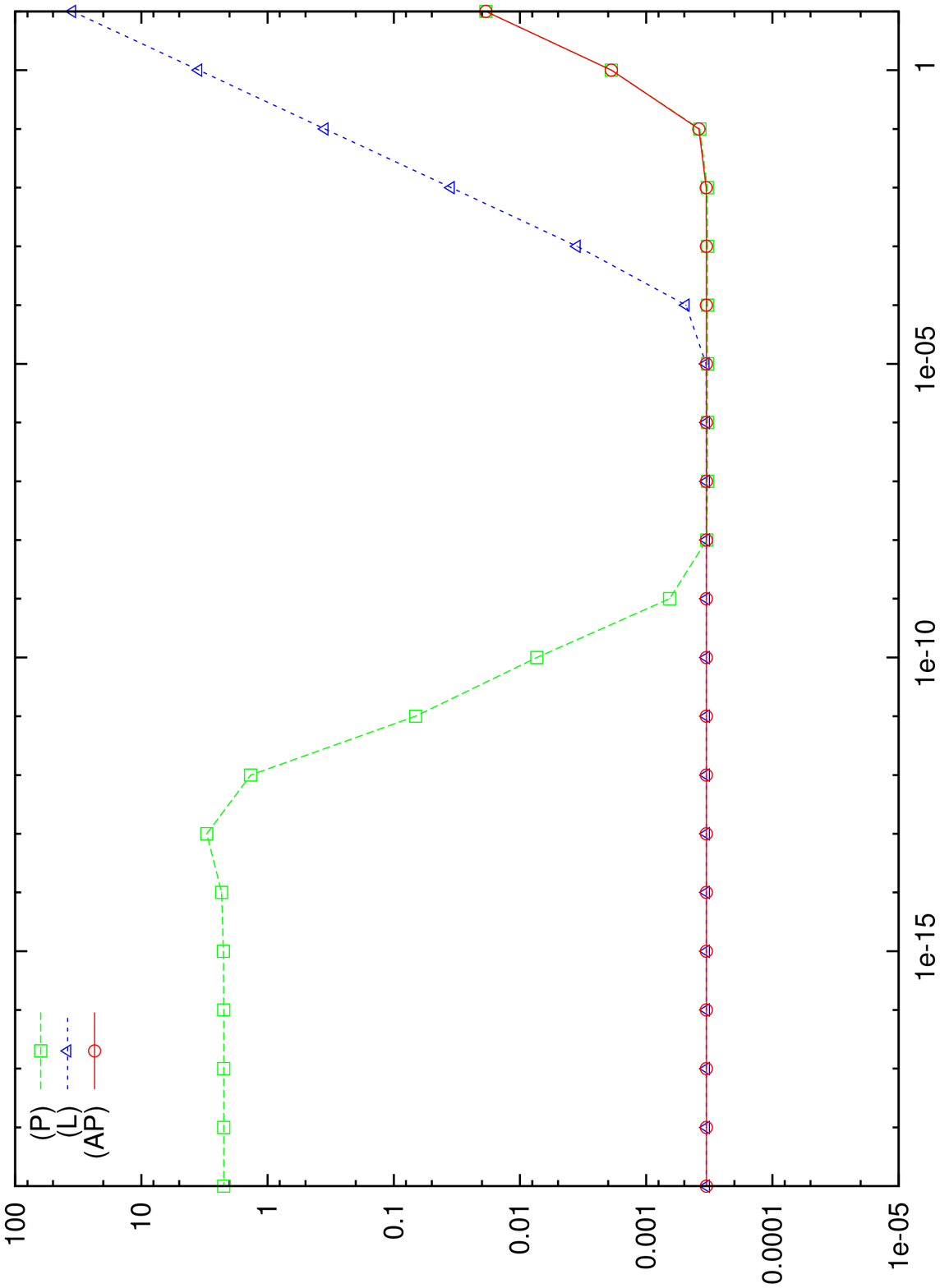}}

  \subfigure[$L^{2}$ error for a grid with $200\times 200$ points.]
  {\includegraphics[angle=-90,width=\xxxa]{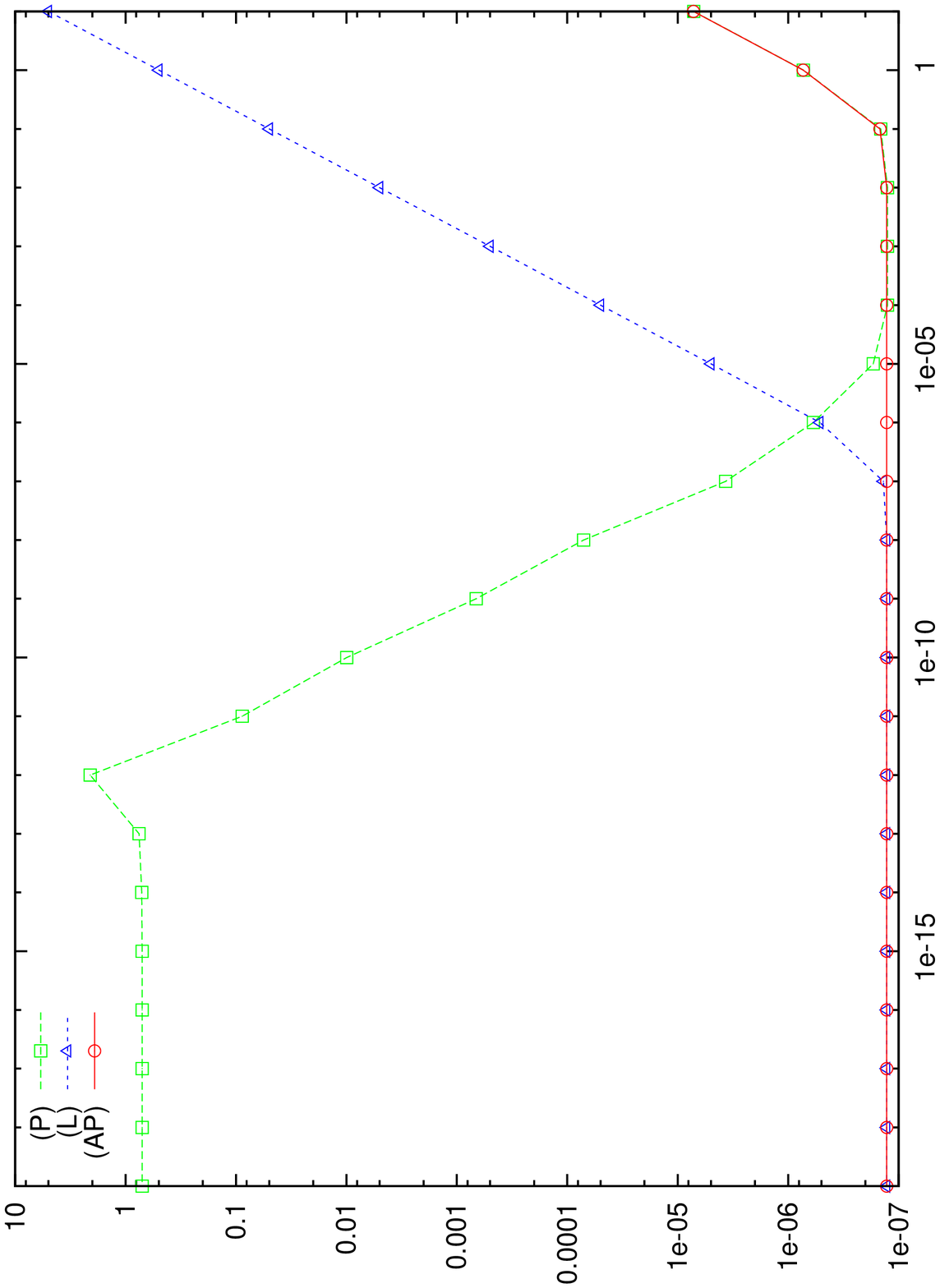}}
  \subfigure[$H^{1}$ error for a grid with $200\times 200$ points.]
  {\includegraphics[angle=-90,width=\xxxa]{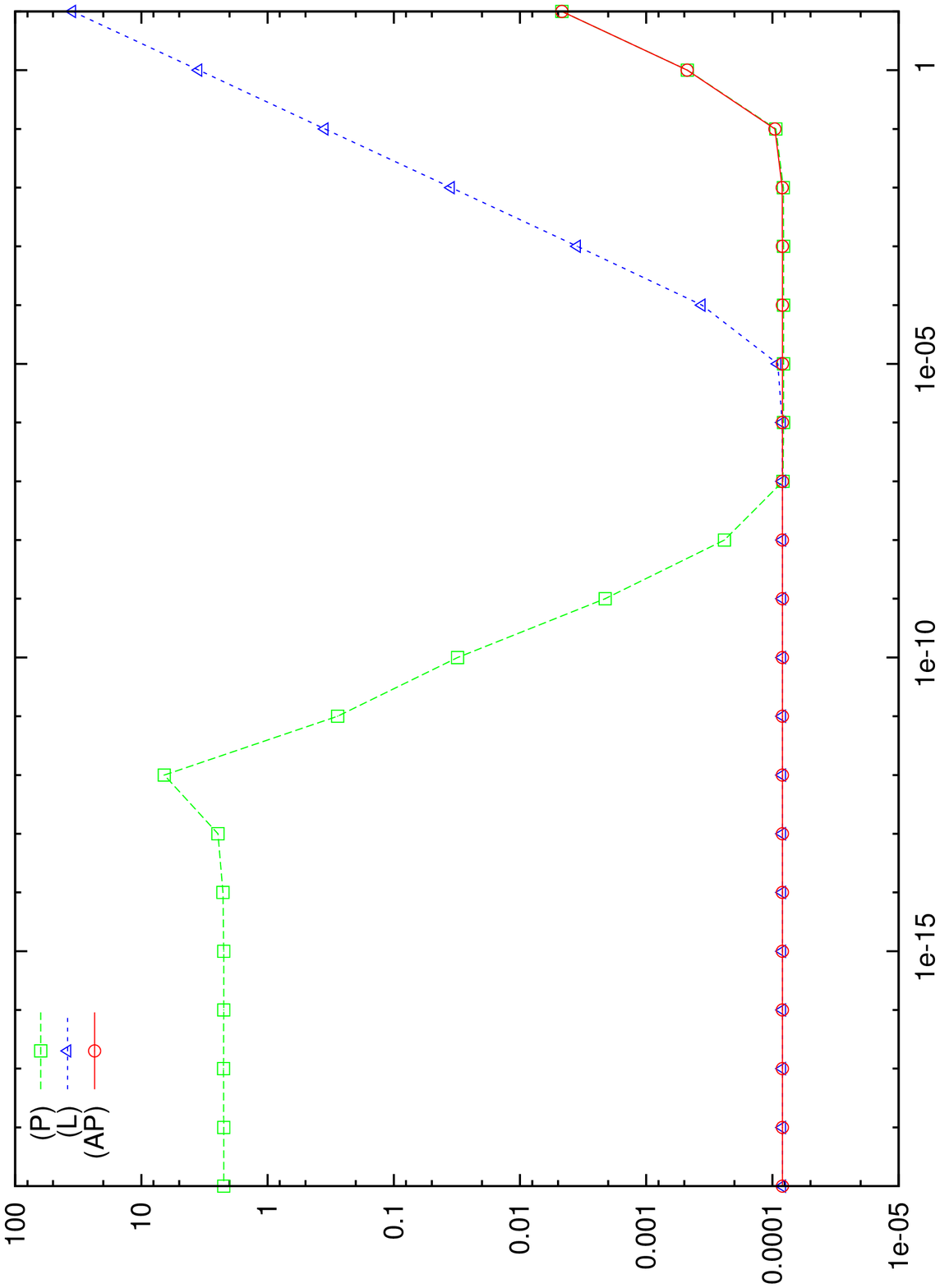}}

  \caption{Absolute $L^{2}$ (left column) and $H^{1}$ (right column)
    errors between the exact solution $\phi^{\varepsilon }$ and the
    computed numerical solution $\phi _A$ (AP), $\phi _L$ (L), $\phi
    _P$ (P) for the test case with constant $b$. The error is plotted
    as a function of the parameter $\eps$ and for three different
    mesh-sizes.}
  \label{fig:error}
\end{figure}

\begin{table}
  \centering
  \begin{tabular}{|c||c|c||c|c||c|c|}
    \hline
    \multirow{2}{*}{$\varepsilon$} &
    \multicolumn{2}{|c||}{\rule{0pt}{2.5ex}AP scheme} 
    & \multicolumn{2}{|c||}{Limit model} 
    & \multicolumn{2}{|c|}{Singular Perturbation scheme} \\
    \cline{2-7} 
    &\rule{0pt}{2.5ex}
    $L^2$ error & $H^1$ error & $L^2$ error & $H^1$ error & $L^2$ error & $H^1$ error  \\
    \hline\hline\rule{0pt}{2.5ex}
    10  & 
    $7.2\times 10^{-6}$ & $4.7\times 10^{-3}$ &
    $5.0\times 10^{0}$ & $3.51\times 10^{1}$ &
    $7.2\times 10^{-6}$ & $4.7\times 10^{-3}$ 
    \\
    \hline\rule{0pt}{2.5ex} 
    1 & 
    $7.3\times 10^{-7}$ & $4.7\times 10^{-4}$ &
    $5.0\times 10^{-1}$ & $3.51\times 10^{0}$ &
    $7.3\times 10^{-7}$ & $4.7\times 10^{-4}$ 
    \\
    \hline\rule{0pt}{2.5ex} 
    $10^{-1}$&
    $1.47\times 10^{-7}$ & $9.6\times 10^{-5}$ &
    $5.0\times 10^{-2}$ & $3.51\times 10^{-1}$ &
    $1.45\times 10^{-7}$ & $9.4\times 10^{-5}$ 
    \\
    \hline\rule{0pt}{2.5ex} 
    $10^{-4}$&
    $1.28\times 10^{-7}$ & $8.3\times 10^{-5}$ &
    $5.0\times 10^{-5}$ & $3.61\times 10^{-4}$ &
    $1.26\times 10^{-7}$ & $8.2\times 10^{-5}$ 
    \\
    \hline\rule{0pt}{2.5ex} 
    $10^{-6}$&
    $1.28\times 10^{-7}$ & $8.3\times 10^{-5}$ &
    $5.2\times 10^{-7}$ & $8.4\times 10^{-5}$ &
    $5.9\times 10^{-7}$ & $8.2\times 10^{-5}$ 
    \\
    \hline\rule{0pt}{2.5ex} 
    $10^{-10}$&
    $1.28\times 10^{-7}$ & $8.3\times 10^{-5}$ &
    $1.28\times 10^{-7}$ & $8.3\times 10^{-5}$ &
    $9.9\times 10^{-3}$ & $3.12\times 10^{-2}$ 
    \\
    \hline\rule{0pt}{2.5ex} 
    $10^{-15}$&
    $1.28\times 10^{-7}$ & $8.3\times 10^{-5}$ &
    $1.28\times 10^{-7}$ & $8.3\times 10^{-5}$ &
    $7.1\times 10^{-1}$ & $2.23\times 10^{0}$ 
    \\
    \hline
  \end{tabular}
  \caption{Comparison between the Asymptotic Preserving scheme, the
    Limit model and the Singular Perturbation model for $h=0.005$ (200 mesh points
    in each direction) and constant $b$: absolute $L^2$-error and $H^1$-error, for different $\eps$-values.}
  \label{tab:error}
\end{table}

In Figure \ref{fig:error} we plotted the absolute errors (in the $L^2$
resp. $H^1$-norms) between the numerical solutions obtained with one
of the three methods and the exact solution, and this, as a function
of the parameter $\eps$ and for several mesh-sizes. In Table
\ref{tab:error}, we specified the error values for one fixed grid and
several $\eps$-values. One observes that the Singular Perturbation
finite element approximation is accurate only for $\varepsilon$ bigger
than some critical value $\varepsilon _P$, the Limit model gives
reliable results for $\varepsilon$ smaller than $\varepsilon _L$,
whereas the AP-scheme is accurate independently on $\eps$.  The order
of convergence for all three methods is three in the $L^2$-norm and
two in the $H^1$-norm, which is an optimal result for $\mathbb Q_2$
finite elements.  When designing a robust numerical method one has
therefore two options. The first one is to use an Asymptotic
Preserving scheme, which is accurate independently on $\varepsilon $,
but requires the solution of a bigger linear system.  The second one
is to design a coupling strategy that involves the solution of the
Singular Perturbation formulation and the Limit problem in their
respective validity domains. This is however a very delicate problem,
since we observe that the critical values $\varepsilon _P$ and
$\varepsilon _L$ are mesh dependent, namely $\varepsilon _P$ inversely
proportional to $h$ and $\varepsilon _L$ proportional to
$h^{}$. Therefore for small meshes there may exist a range of
$\varepsilon $-values, where neither the Singular Perturbation nor the
Limit model finite element approximation give accurate results. For
our test case, this is even the case for meshes as big as $200\times
200$ points, if one regards the $L^2$-norm. This mesh-size is
generally insufficient in the case of real physical applications.

\begin{table}
  \centering
  \begin{tabular}{|c||c|c|c|c|c|}
    \hline\rule{0pt}{2.5ex}
    method & \# rows & \# non zero & time & $L^{2}$-error & $H^{1}$-error\\
    \hline
    \hline\rule{0pt}{2.5ex}
    AP &
    $50\times 10^{3}$ &
    $1563\times 10^{3}$ &
    $13.212$ s &
    $1.02\times 10^{-6}$ &
    $3.34\times 10^{-4}$
    \\
    \hline\rule{0pt}{2.5ex}
    L &
    $20\times 10^{3}$ &
    $469\times 10^{3}$ &
    $5.227$ s &
    $1.14\times 10^{-6}$ &
    $3.34\times 10^{-4}$
    \\
    \hline\rule{0pt}{2.5ex}
    P &
    $10\times 10^{3}$ &
    $157\times 10^{3}$ &
    $3.707$ s &
    $1.02\times 10^{-6}$ &
    $3.27\times 10^{-4}$
    \\
    \hline
  \end{tabular}
  \caption{Comparison between the Asymptotic Preserving scheme (AP),
    the Limit model (L) and the Singular Perturbation model (P) for
    $h=0.01$ (100 mesh points in each direction) and fixed $\varepsilon =
    10^{-6}$: matrix size, number of nonzero elements, average
    computational time and error in $L^{2}$ and $H^{1}$ norms.}
  \label{tab:time}
\end{table}

Another interesting aspect with respect to which the three methods
must be compared, is the computational time and the size of the
matrices involved in the linear systems. Table \ref{tab:time} shows
that the Asymptotic Preserving scheme is expensive in computational
time and memory requirements, as compared to the other
methods. Indeed, the computational time required to solve the problem
is almost four times bigger than that of the Singular Perturbation
scheme. Moreover, the Asymptotic Preserving method involves matrices
that have five times more rows and ten times more nonzero elements
than the matrices obtained with the Singular Perturbation
approximation. It is however the only scheme that provides the
$h$-convergence regardless of $\varepsilon$. In order to reduce the
computational costs, a coupling strategy for problems with variable
$\varepsilon $ will be proposed in a forthcoming paper. In sub-domains
where $\varepsilon > \varepsilon _P$ the Singular Perturbation problem
will be solved, in sub-domains where $\varepsilon < \varepsilon _L$
the Limit problem will be solved and only in the remaining part, where
neither the Limit nor the Singular Perturbation model are valid, the
Asymptotic Preserving formulation will be solved.

\subsubsection{2D test case, non-uniform and non-aligned $b$-field}
We now focus our attention on the original feature of the here
introduced numerical method, namely its ability to treat nonuniform
$b$ fields. In this section we present numerical simulations performed
for a variable field $b$.

First, let us construct a numerical test case. Finding an analytical
solution for an arbitrary $b$ presents a considerable difficulty. We
have therefore chosen a different approach. First, we choose a limit
solution
\begin{gather}
  \phi^{0} = \sin \left(\pi y +\alpha (y^2-y)\cos (\pi x) \right)
  \label{eq:J79a},
\end{gather}
where $\alpha $ is a numerical constant aimed at controlling the
variations of $b$. For $\alpha =0$, the limit solution of the previous
section is obtained. The limit solution for $\alpha =2$ is shown in
Figure \ref{fig:limit}. \textcolor{black}{We set $\alpha =2$ in what follows.}
\begin{figure}[!ht] 
  \centering
  \def\xxxa{0.45\textwidth}
  \includegraphics[width=\xxxa]{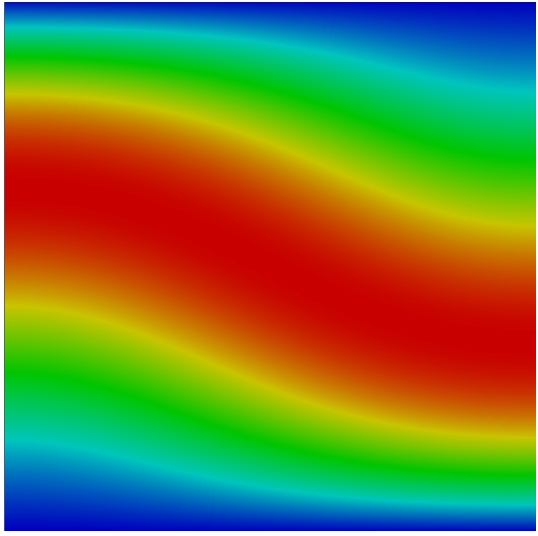}
  \caption{The limit solution for the test case with variable $b$.}
  \label{fig:limit}
\end{figure}
Since $\phi^{0}$ is a limit solution, it is constant along the $b$
field lines. Therefore we can determine the $b$ field using the
following implication
\begin{gather}
  \nabla_{\parallel} \phi^{0} = 0 \quad \Rightarrow \quad
  b_x \frac{\partial \phi^{0}}{\partial x} +
  b_y \frac{\partial \phi^{0}}{\partial y} = 0\,,
  \label{eq:J89a}
\end{gather}
which yields for example
\begin{gather}
  b = \frac{B}{|B|}\, , \quad
  B =
  \left(
    \begin{array}{c}
      \alpha  (2y-1) \cos (\pi x) + \pi \\
      \pi \alpha  (y^2-y) \sin (\pi x)
    \end{array}
  \right)
  \label{eq:J99a}\,\quad. 
\end{gather}
Note that the field $B$, constructed in this way, satisfies
$\text{div} B = 0$, which is an important property in the framework of
plasma simulation. Furthermore, we have $B \neq 0$ in the
computational domain. Now, we choose $\phi^\varepsilon $ to be a
function that converges, as $\varepsilon \rightarrow 0$, to the limit
solution $\phi^{0}$:
\begin{gather}
  \phi^{\varepsilon } = \sin \left(\pi y +\alpha (y^2-y)\cos (\pi
    x) \right) + \varepsilon \cos \left( 2\pi x\right) \sin \left(\pi
    y \right)
  \label{eq:Jc0a}.
\end{gather}
Finally, the force term is calculated, using the equation, i.e.
\begin{gather}
  f = - \nabla_\perp \cdot (A_\perp \nabla_\perp \phi^{\varepsilon })
  - \frac{1}{\varepsilon }\nabla_\parallel \cdot (A_\parallel \nabla_\parallel \phi^{\varepsilon })
  \nonumber.
\end{gather}

As in the previous section, we shall compare here the numerical
solution of the Singular Perturbation model (\ref{eq:J07a}), the Limit
model (\ref{eq:Jv9a}) and the Asymptotic Preserving reformulation
(\ref{eq:Ju7a}), i.e. $\phi _P$, $\phi _L$, $\phi _A$ with the exact
solution (\ref{eq:Jc0a}) . The $L^{2}$ and $H^{1}$-errors are reported
on Figure \ref{fig:errorvar} and Table \ref{tab:errorvar}. Once again
the Asymptotic Preserving scheme proves to be valid for all values of
$\varepsilon $, contrary to the other schemes. There is however a
difference compared to the constant-$b$ case.
For a variable $b$ , the threshold value $\varepsilon _P$ seems to be
independent on the mesh size and is much larger than that of the
uniform $b$ test case. This observation limits further the possible
choice of coupling strategies, since even for coarse meshes there
exists a range of $\varepsilon $-values, where neither the Singular
Perturbation nor the Limit model are valid. The coupling strategy,
involving all three models, remains however interesting to
investigate.

\begin{table}
  \centering
  \begin{tabular}{|c||c|c||c|c||c|c|}
    \hline
    \multirow{2}{*}{$\varepsilon$} &
    \multicolumn{2}{|c||}{\rule{0pt}{2.5ex}AP scheme} 
    & \multicolumn{2}{|c||}{Limit model} 
    & \multicolumn{2}{|c|}{Singular Perturbation scheme} \\
    \cline{2-7} 
    &\rule{0pt}{2.5ex}
    $L^2$ error & $H^1$ error & $L^2$ error & $H^1$ error & $L^2$ error & $H^1$ error  \\
    \hline\hline\rule{0pt}{2.5ex}
    10  & 
    $7.2\times 10^{-6}$ & $4.6\times 10^{-3}$ &
    $5.0\times 10^{0}$ & $3.50\times 10^{1}$ &
    $7.2\times 10^{-6}$ & $4.6\times 10^{-3}$ 
    \\
    \hline\rule{0pt}{2.5ex} 
    1 & 
    $7.1\times 10^{-7}$ & $4.6\times 10^{-4}$ &
    $5.0\times 10^{-1}$ & $3.50\times 10^{0}$ &
    $7.1\times 10^{-7}$ & $4.6\times 10^{-4}$ 
    \\
    \hline\rule{0pt}{2.5ex} 
    $10^{-2}$&
    $2.05\times 10^{-7}$ & $1.33\times 10^{-4}$ &
    $5.0 \times 10^{-3}$ & $3.50\times 10^{-2}$ &
    $2.05\times 10^{-7}$ & $1.33\times 10^{-4}$ 
    \\
    \hline\rule{0pt}{2.5ex} 
    $10^{-4}$&
    $2.12\times 10^{-7}$ & $1.38\times 10^{-4}$ &
    $5.0 \times 10^{-5}$ & $3.77\times 10^{-4}$ &
    $1.74\times 10^{-6}$ & $1.43\times 10^{-4}$ 
    \\
    \hline\rule{0pt}{2.5ex} 
    $10^{-7}$&
    $2.17\times 10^{-7}$ & $1.41\times 10^{-4}$ &
    $2.22\times 10^{-7}$ & $1.41\times 10^{-4}$ &
    $1.68\times 10^{-3}$ & $1.26\times 10^{-2}$ 
    \\
    \hline\rule{0pt}{2.5ex} 
    $10^{-10}$&
    $2.17\times 10^{-7}$ & $1.41\times 10^{-4}$ &
    $2.17\times 10^{-7}$ & $1.41\times 10^{-4}$ &
    $3.93\times 10^{-1}$ & $1.35\times 10^{0}$ 
    \\
    \hline\rule{0pt}{2.5ex} 
    $10^{-15}$&
    $2.17\times 10^{-7}$ & $1.41\times 10^{-4}$ &
    $2.17\times 10^{-7}$ & $1.41\times 10^{-4}$ &
    $6.7 \times 10^{-1}$ & $2.32\times 10^{0}$ 
    \\
    \hline
  \end{tabular}
  \caption{Comparison between the Asymptotic preserving scheme, the
    Limit model and the Singular Perturbation model for $h=0.005$ (200 mesh points
    in each direction) and variable $b$: absolute $L^2$-error and $H^1$-error.}
  \label{tab:errorvar}
\end{table}

\def\xxxa{0.45\textwidth}
\begin{figure}[!ht] 
  \centering
  \subfigure[$L^{2}$ error for a grid with $50\times 50$ points.]
  {\includegraphics[angle=-90,width=\xxxa]{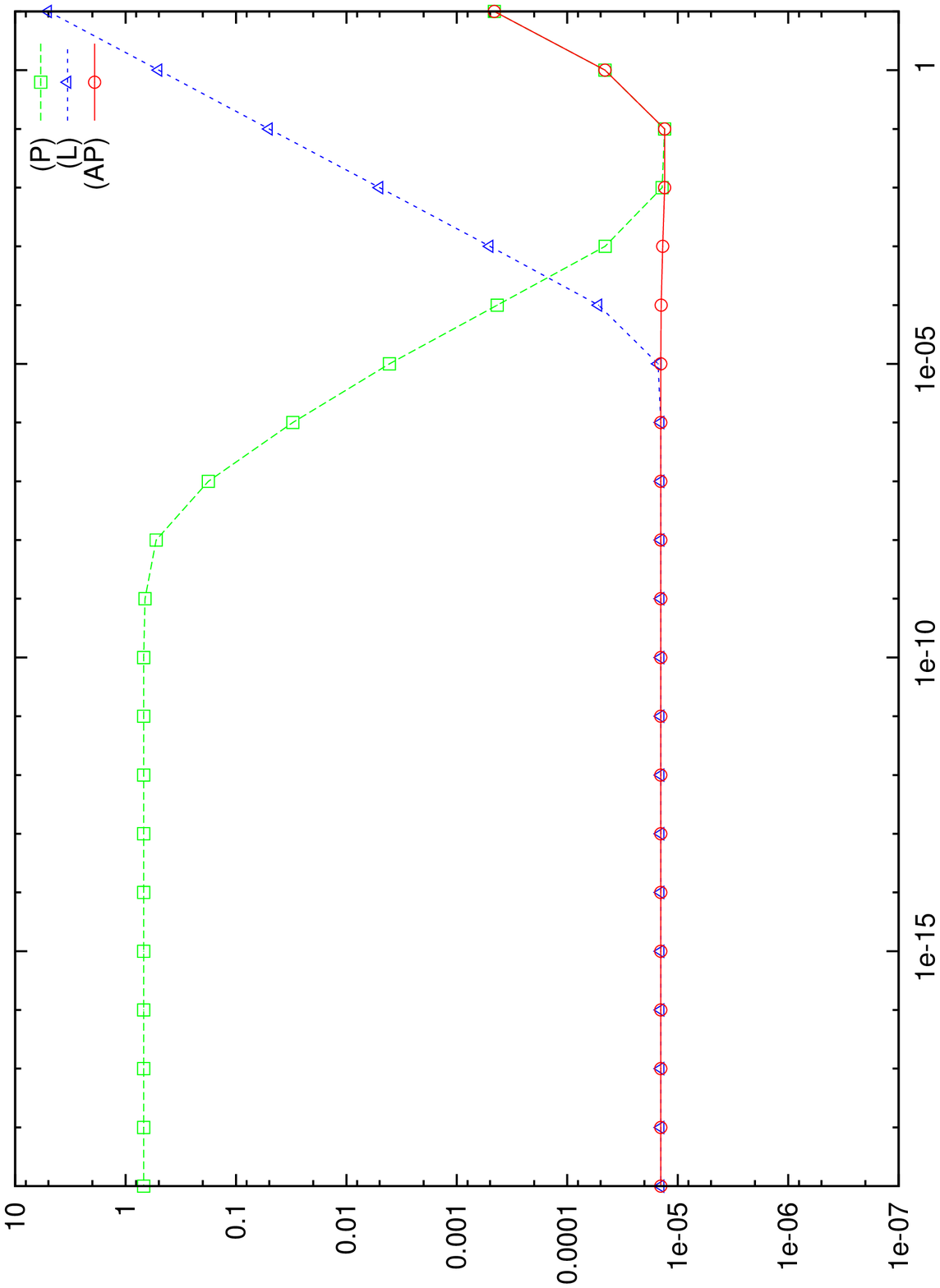}}
  \subfigure[$H^{1}$ error for a grid with $50\times 50$ points.]
  {\includegraphics[angle=-90,width=\xxxa]{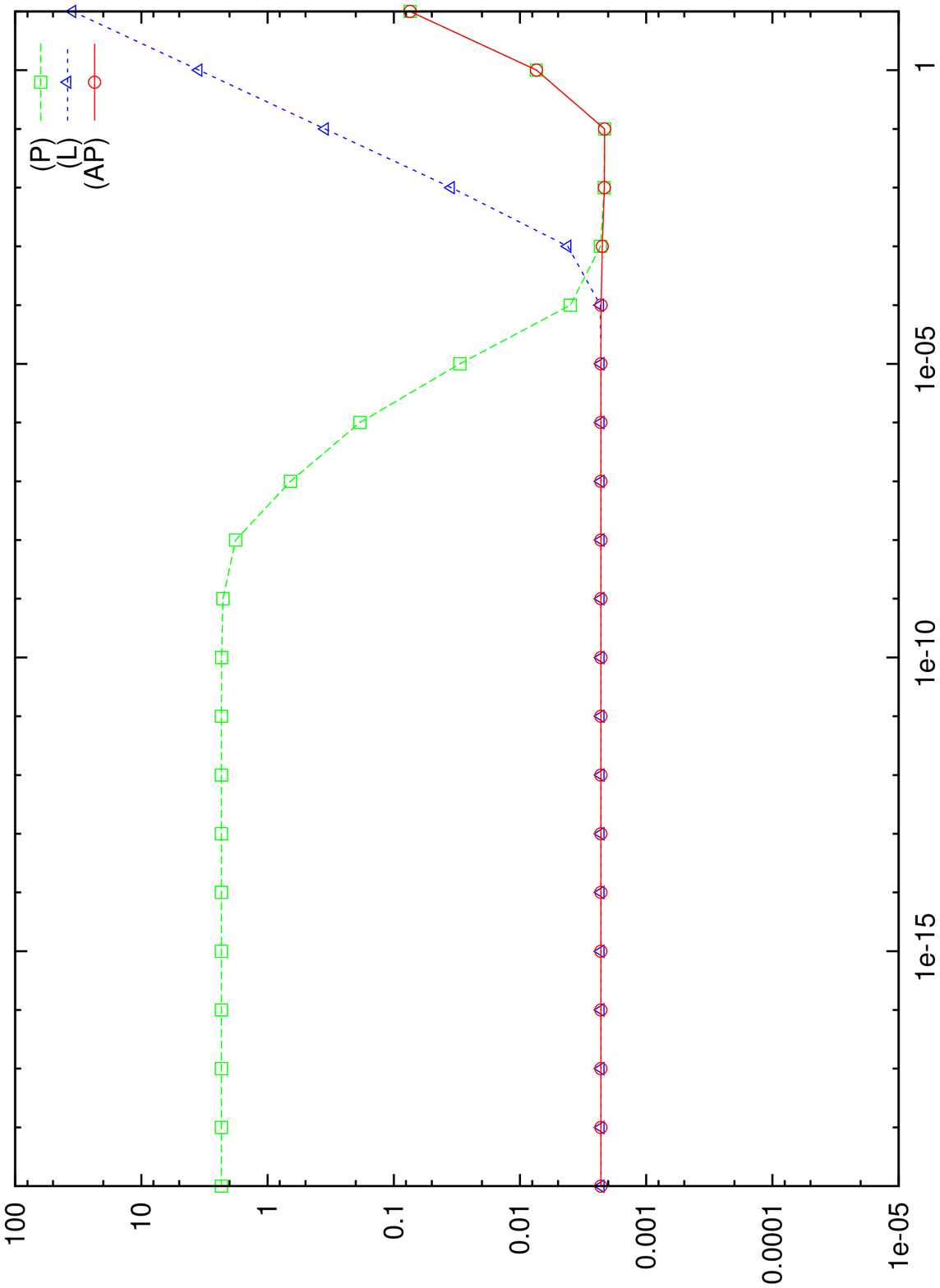}}

  \subfigure[$L^{2}$ error for a grid with $100\times 100$ points.]
  {\includegraphics[angle=-90,width=\xxxa]{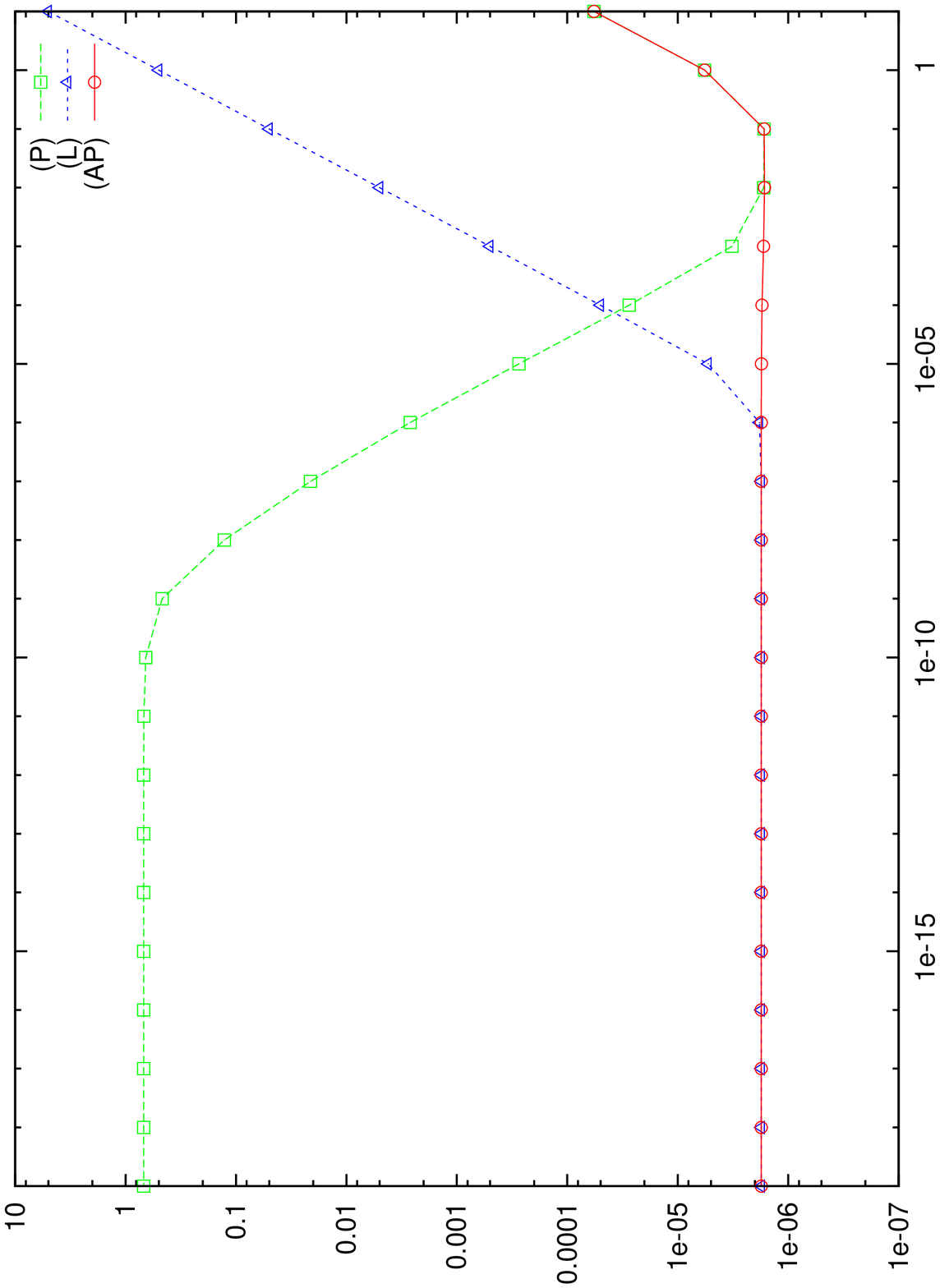}}
  \subfigure[$H^{1}$ error for a grid with $100\times 100$ points.]
  {\includegraphics[angle=-90,width=\xxxa]{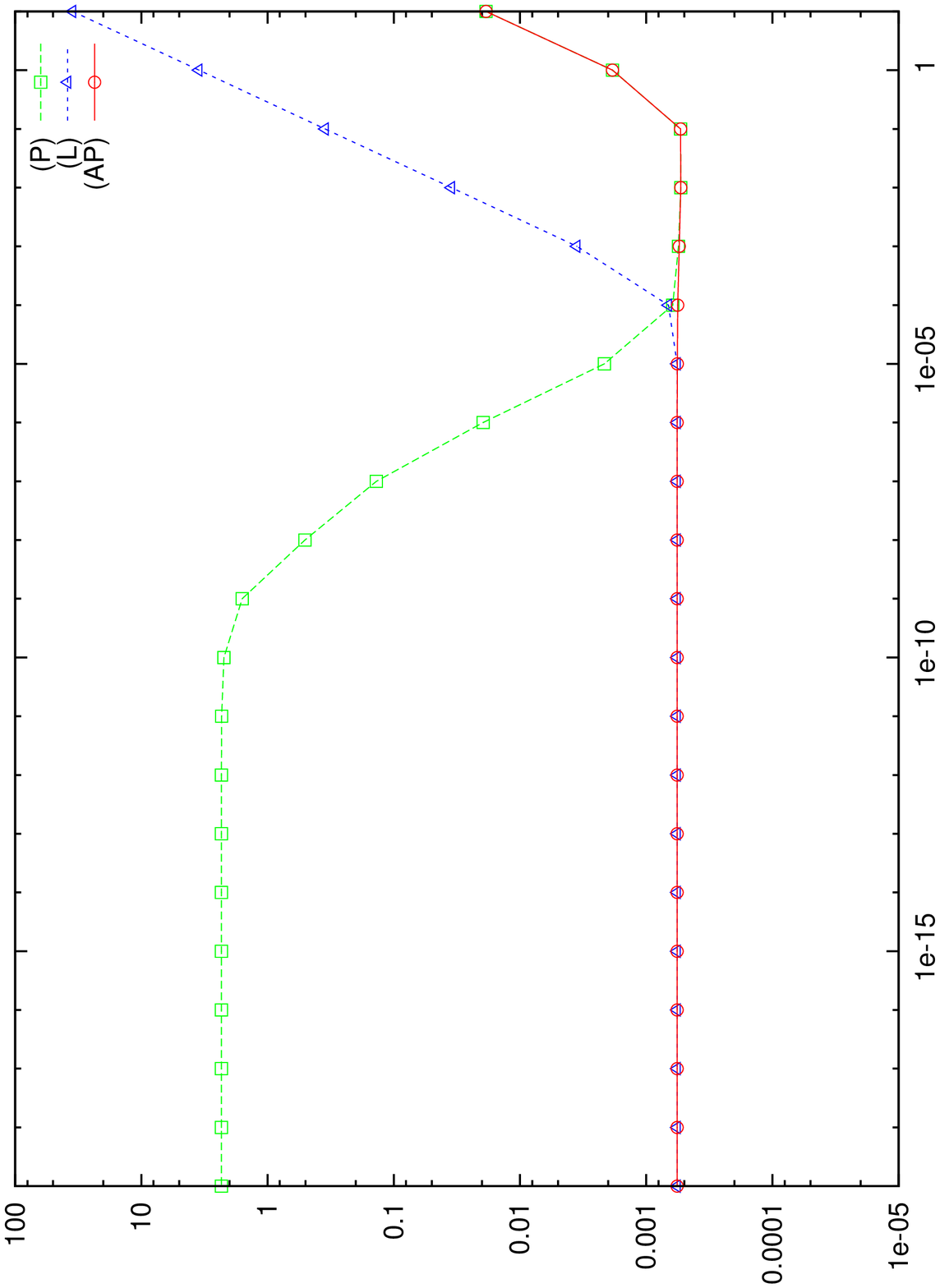}}

  \subfigure[$L^{2}$ error for a grid with $200\times 200$ points.]
  {\includegraphics[angle=-90,width=\xxxa]{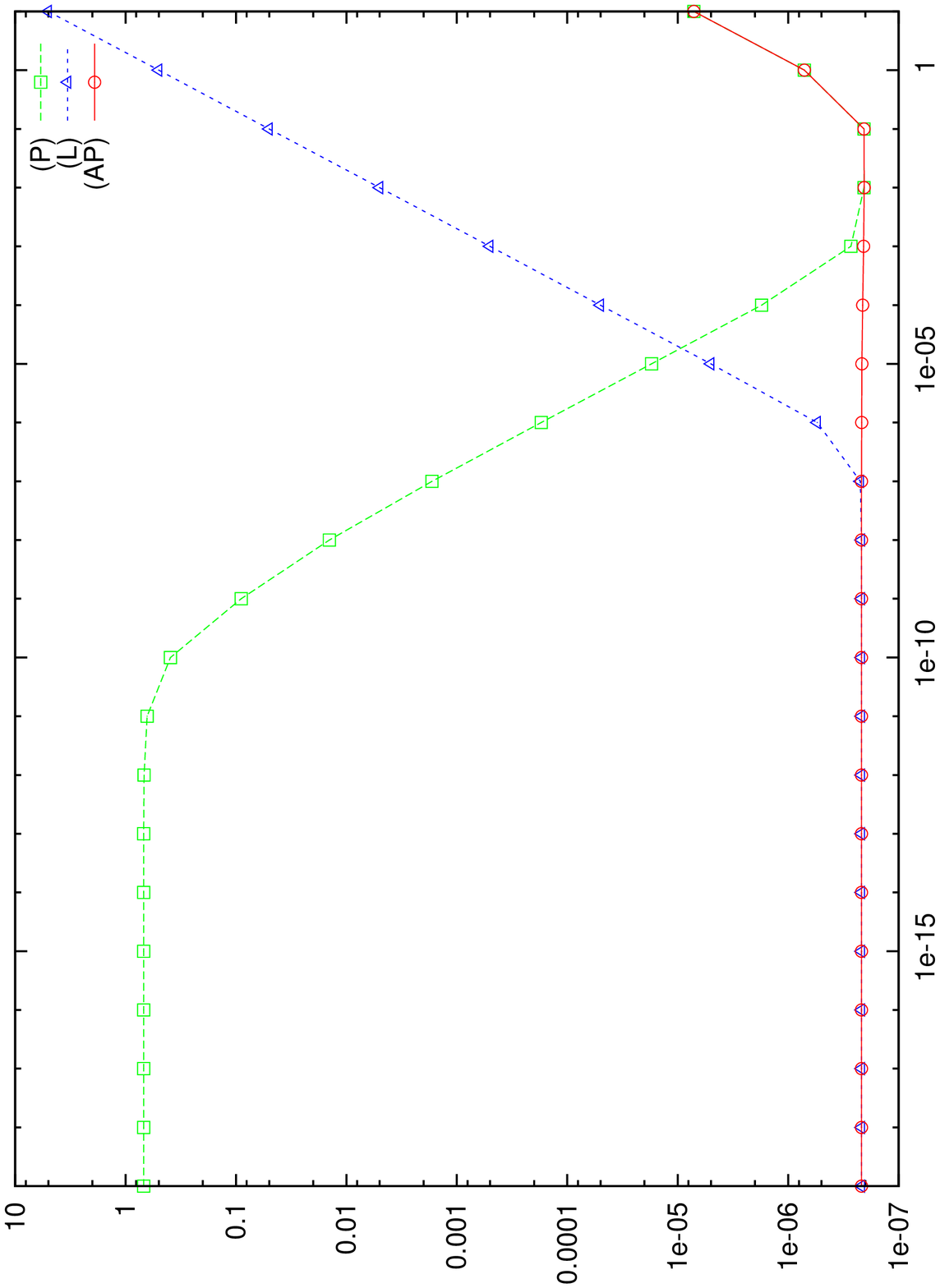}}
  \subfigure[$H^{1}$ error for a grid with $200\times 200$ points.]
  {\includegraphics[angle=-90,width=\xxxa]{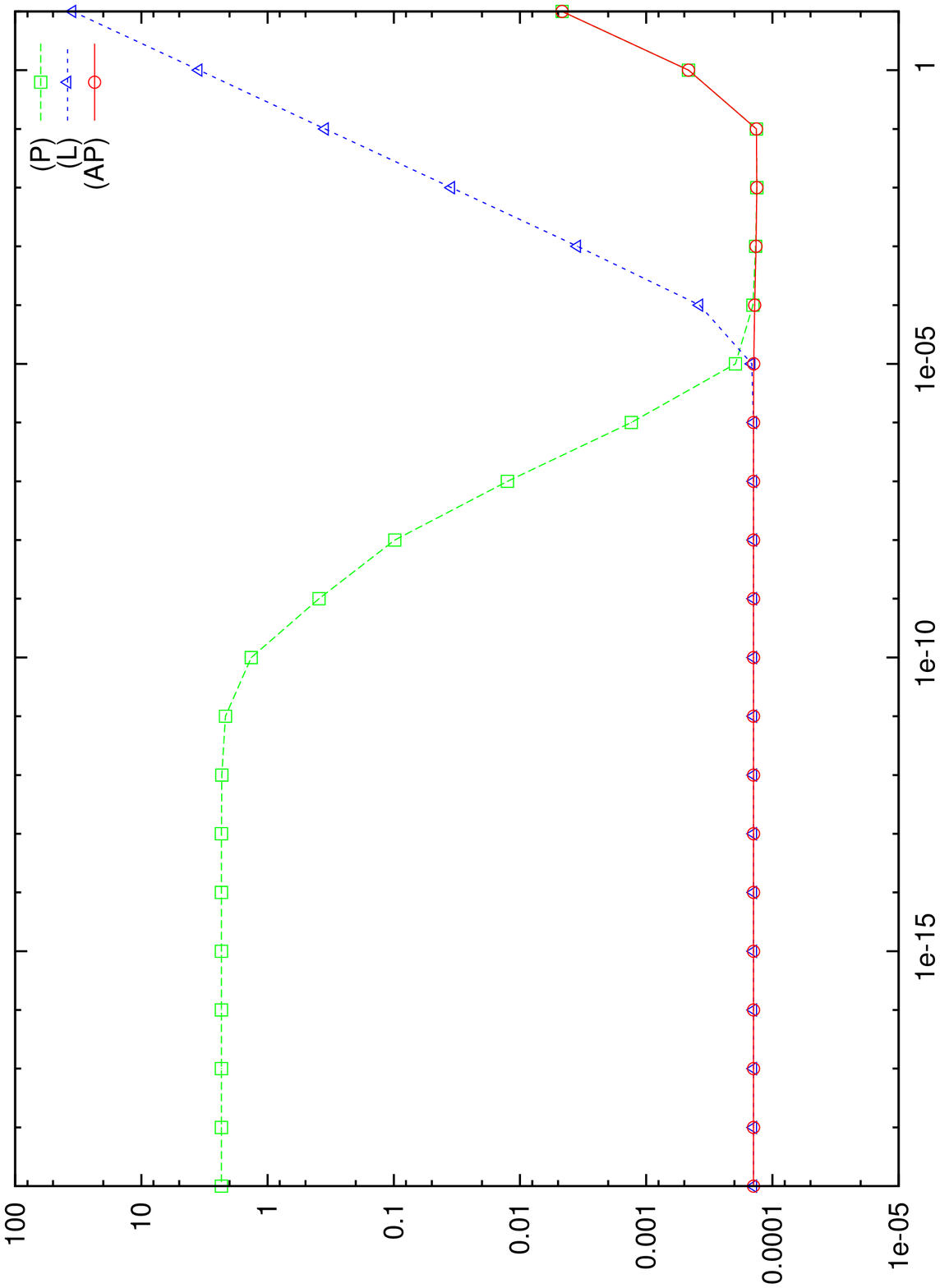}}

  \caption{Absolute $L^{2}$ (left column) and $H^{1}$ (right column)
    errors between the exact solution $\phi^{\varepsilon }$ and the
    computed solution $\phi _A$ (AP), $\phi _L$ (L), $\phi _P$ (P) for
    the test case with variable $b$. Plotted are the errors as a
    function of the small parameter $\varepsilon $, for three different
    meshes.}
  \label{fig:errorvar}
\end{figure}

In the next test case we investigate the influence of the variations
of the $b$ field on the accuracy of the solution. We would like to answer
the following question: what is the minimal number of points per
characteristic length of $b$ variations required to obtain an
acceptable solution. For this, let us modify the previous test case. Let $b =
B/|B|$, with
\begin{gather}
  B =
  \left(
    \begin{array}{c}
      \alpha  (2y-1) \cos (m\pi x) + \pi \\
      m\pi \alpha  (y^2-y) \sin (m\pi x)
    \end{array}
  \right)\, ,
  \label{eq:Ji0a}
\end{gather}
$m$ being an integer. The limit solution and $\phi^{\varepsilon}$ are
chosen to be
\begin{gather}
  \phi^{0} = \sin \left(\pi y +\alpha (y^2-y)\cos (m\pi x) \right)
  \label{eq:Jj0a}, \\
  \phi^{\varepsilon } = \sin \left(\pi y +\alpha (y^2-y)\cos (m\pi
    x) \right) + \varepsilon \cos \left( 2\pi x\right) \sin \left(\pi
    y \right)
  \label{eq:Jk0a}.
\end{gather}

We perform two tests: first, we fix the mesh size and vary $m$ to
find the minimal period of $b$ for which the Asymptotic Preserving
method yields still acceptable results. We define a result to be acceptable
when the relative error is less then $0.01$. In the second test $m$
remains fixed and the convergence of the scheme is studied. The
results are presented on Figures \ref{fig:bp_var} and
\ref{fig:bp_h_var}.

For $\varepsilon =1$ and 400 mesh points in each direction
($h=0.0025$) the relative error in the $L^{2}$-norm, defined as
$\frac{||\phi^{\varepsilon} -
  \phi_A||_{L^{2}(\Omega)}}{||\phi_A||_{L^{2}(\Omega)}}$, is below
$0.01$ for all tested values of $1 \leq m \leq 50$. The relative
$H^1$-error $\frac{||\phi^{\varepsilon} -
  \phi_A||_{H^{1}(\Omega)}}{||\phi_A||_{H^{1}(\Omega)}}$ exceeds the
critical value for $m > 25$. For $\varepsilon = 10^{-20}$ the maximal
$m$ for which the error is acceptable in both norms is $20$. The
minimal number of mesh points per period of $b$ variations is 40 in
the worst case, in order to obtain an $1\%$ relative error.

Figure \ref{fig:bp_h_var} and Table \ref{tab:bp_h_var} show the
convergence of the Asymptotic Preserving scheme with respect to $h$
for $m=10$ and $\varepsilon = 10^{-10}$. We observe that for big
values of $h$ the error does not diminish with $h$. Then, for $h <
0.025$ the scheme converges at a better rate then 2 for $H^{1}$-error
and 3 for $L^{2}$-error. For $h<0.00625$ (160 points) the optimal
convergence rate in the $H^{1}$-norm is obtained (which is 32 mesh
points per period of $b$). The method is super-convergent in the whole
tested range for the $L^{2}$-error.

These results are reassuring, as they prove that the Asymptotic
Preserving scheme is precise even for strongly varying fields for
relatively small mesh sizes, which was not evident. Indeed, the
optimal convergence rate in the $H^{1}$-norm is obtained for 32 mesh
points per $b$ period, and an $1\%$ relative error for 40 points. It
shows that accurate results can be obtained in more complex
simulations, such as tokamak plasma for example. The application of
the method to bigger scale problems is the subject of an ongoing work.

\def\xxxa{0.45\textwidth}
\begin{figure}[!ht]
  \centering
  \subfigure[$\varepsilon = 1$]
  {\includegraphics[angle=-90,width=\xxxa]{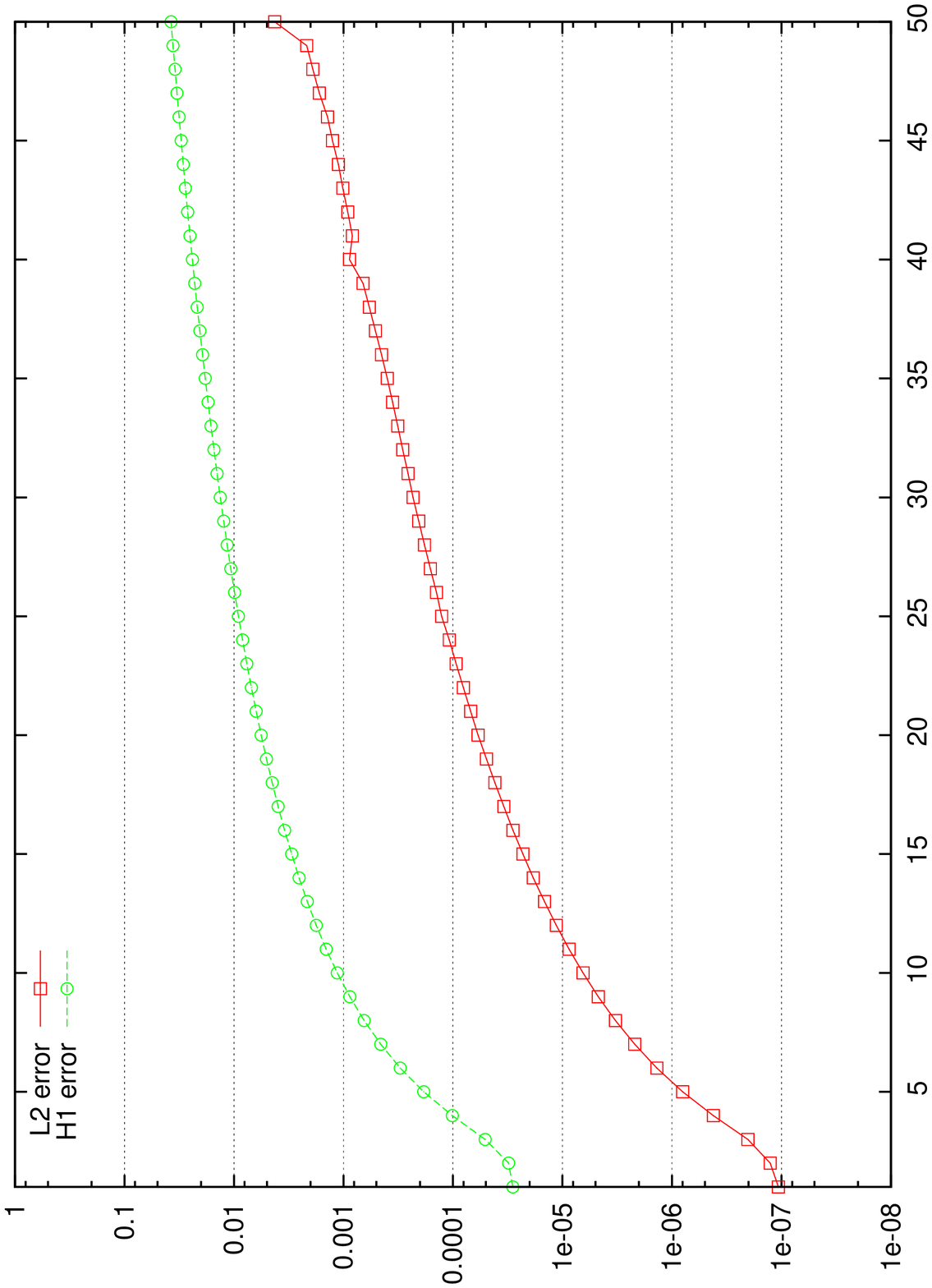}}
  \subfigure[$\varepsilon = 10^{-20}$]
  {\includegraphics[angle=-90,width=\xxxa]{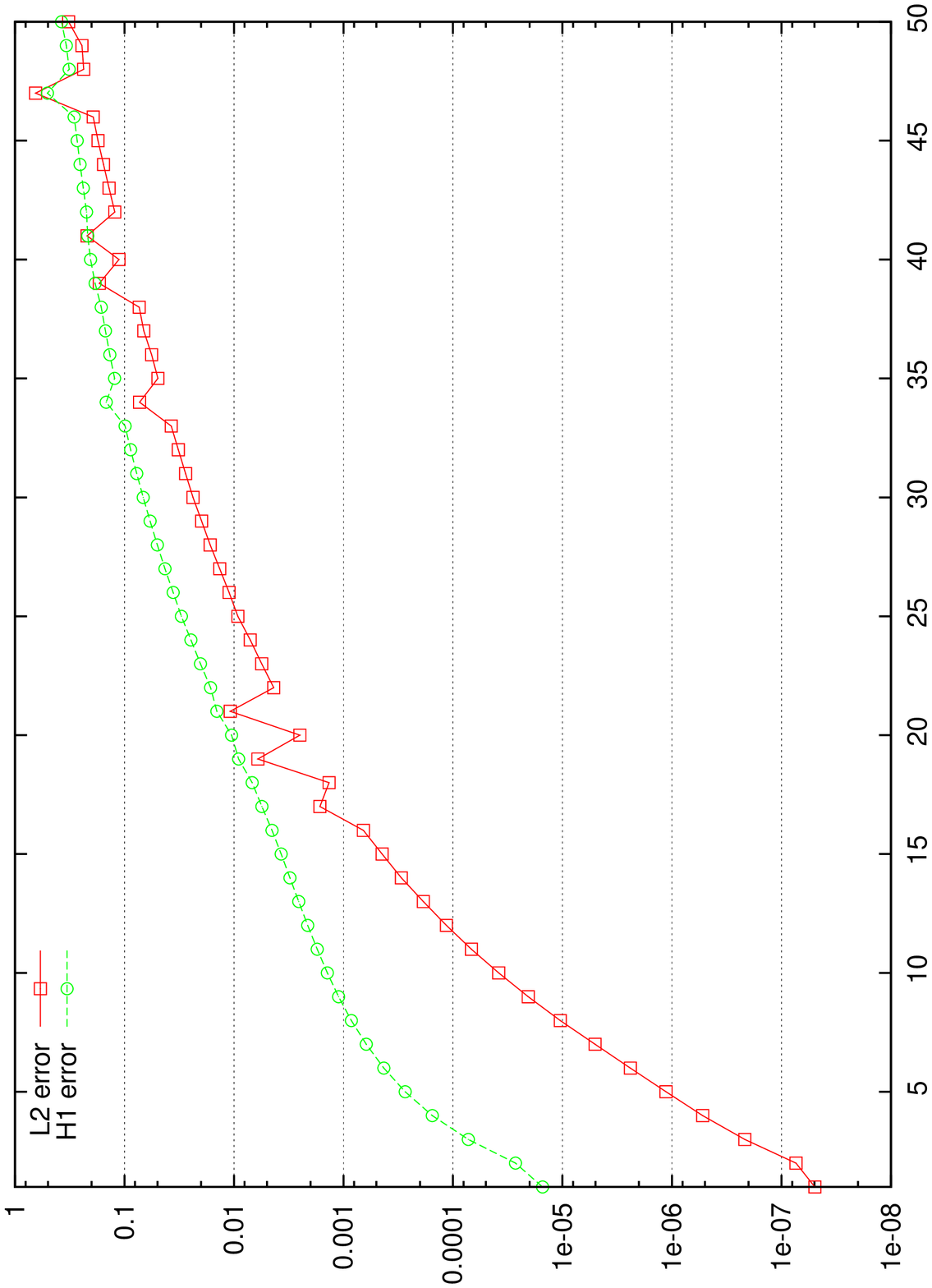}}

  \caption{Relative $L^{2}$ and $H^{1}$ errors between the exact
    solution $\phi^{\varepsilon }$ and the computed solution $\phi _A$
    (AP) for $h=0.0025$ (400 points in each direction) as a function of
    $m$ and for $\varepsilon = 1$ respectively  $10^{-20}$.}
  \label{fig:bp_var}
\end{figure}

\def\xxxa{0.45\textwidth}
\begin{figure}[!ht] 
  \centering
  {\includegraphics[angle=-90,width=\xxxa]{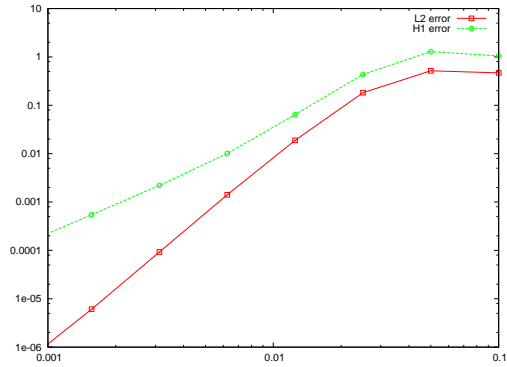}}
  
  \caption{Relative $L^{2}$ and $H^{1}$ errors between the exact
    solution $\phi^{\varepsilon }$ and the computed solution $\phi _A$
    (AP) for $m=10$ and $\varepsilon = 10^{-10}$ as a function of $h$.}
  \label{fig:bp_h_var}
\end{figure}

\begin{table}[!ht] 
  \centering
  \begin{tabular}{|c|c||c|c|}
    \hline\rule{0pt}{2.5ex}
    $h$ & \# points per period & $L^{2}$-error & $H^{1}$-error\\
    \hline
    \hline\rule{0pt}{2.5ex}
    $0.1$ &
    $2$ &
    $4.7\times 10^{-1}$ &
    $1.05$ 
    \\
    \hline\rule{0pt}{2.5ex}
    $0.05$ &
    $4$ &
    $5.2\times 10^{-1}$ &
    $1.29$ 
    \\
    \hline\rule{0pt}{2.5ex}
    $0.025$ &
    $8$ &
    $1.82\times 10^{-1}$ &
    $4.3\times 10^{-1}$
    \\
    \hline\rule{0pt}{2.5ex}
    $0.0125$ &
    $16$ &
    $1.89\times 10^{-2}$ &
    $6.4\times 10^{-2}$
    \\
    \hline\rule{0pt}{2.5ex}
    $0.00625$ &
    $32$ &
    $1.41\times 10^{-3}$ &
    $1.00\times 10^{-2}$
    \\
    \hline\rule{0pt}{2.5ex}
    $0.0003125$ &
    $64$ &
    $9.3\times 10^{-5}$ &
    $2.21\times 10^{-3}$
    \\
    \hline\rule{0pt}{2.5ex}
    $0.0015625$ &
    $128$ &
    $6.1\times 10^{-6}$ &
    $5.5\times 10^{-4}$
    \\
    \hline\rule{0pt}{2.5ex}
    $0.00078125$ &
    $256$ &
    $4.6\times 10^{-7}$ &
    $1.36\times 10^{-4}$
    \\
    \hline
  \end{tabular}
  \caption{Relative $L^{2}$ and $H^{1}$ errors between the exact
    solution $\phi^{\varepsilon }$ and the computed solution $\phi _A$
    (AP) for $m=10$ and $\varepsilon = 10^{-10}$ as a function of
    $h$.}
  \label{tab:bp_h_var}
\end{table}

\subsubsection{3D test case, uniform and aligned $b$-field}
Finally, we test our method on a simple $3D$ test case. Let the 
field $b$ be aligned with the $X$-axis:
\begin{gather}
  b= 
  \left(
    \begin{array}{c}
      1 \\
      0 \\
      0
    \end{array}
  \right)
  \label{eq:Jf0a}.
\end{gather}
Let $\Omega = [0,1] \times [0,1]\times [0,1]$, and the source term $f$
is such that the solution is given by
\begin{gather}
  \phi^{\varepsilon } = \sin \left(\pi y \right)\sin \left(\pi z \right) 
  + \varepsilon \cos \left( 2\pi x\right)
  \sin \left(\pi y \right)\sin \left(\pi z \right), \\
  p^{\varepsilon } = \sin \left(\pi y \right)\sin \left(\pi z \right)\,, \quad
  q^{\varepsilon } = \varepsilon \cos \left( 2\pi x\right)
  \sin \left(\pi y \right)\sin \left(\pi z \right)
  \label{eq:J97a:2}.
\end{gather}
Numerical simulations were performed on a $30\times 30\times 30$
grid. Once again all three methods are compared. The $L^2$ and
$H^{1}$-errors are given on Figure \ref{fig:error3d}. The numerical
results are equivalent with those obtained in the 2D test with
constant $b$. Note that it is difficult to perform 3D simulations with
more refined grids, due to memory requirements on standard desktop
equipment as we are doing now. Every row in the matrix constructed for
the Singular Perturbation model, can contain up to 125 non zero
entries (for $\mathbb Q_2$ finite elements), while matrices associated
with the Asymptotic Preserving reformulation have rows with up to 375
non zero entries. Furthermore the dimension of the latter is five
times bigger. The memory requirements of the direct solver used in our
simulations grow rapidly. The remedy could be to use an iterative
solver with suitable preconditioner. Finding the most efficient method
to inverse these matrices is however beyond the scope of this
paper. In future work we will address this problem as well as a
parallelization of this method.

\def\xxxa{0.45\textwidth}
\begin{figure}[!ht] 
  \centering
  \subfigure[$L^{2}$ error for a grid with $30\times 30 \times 30$ points.]
  {\includegraphics[angle=-90,width=\xxxa]{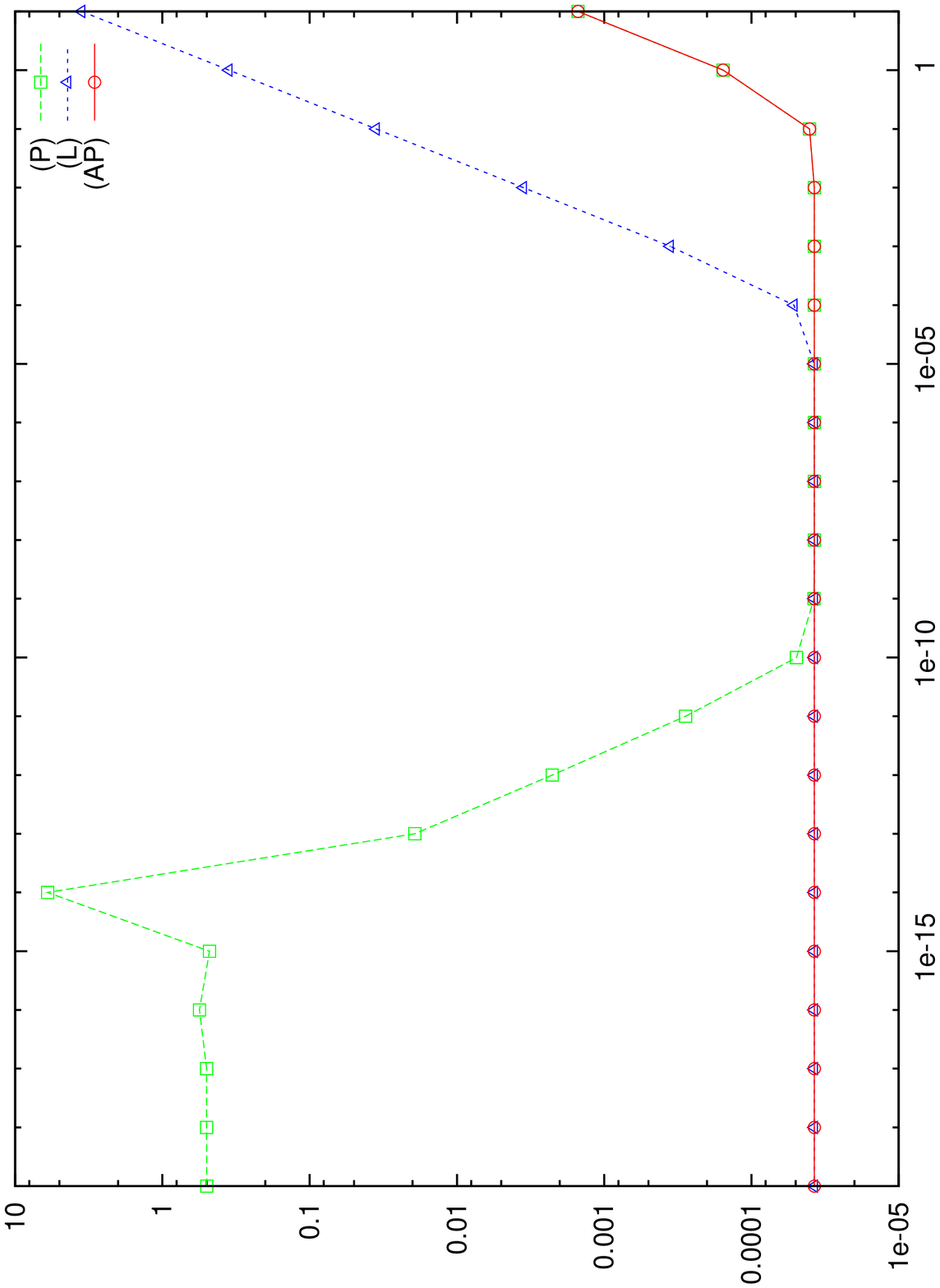}}
  \subfigure[$H^{1}$ error for a grid with $30\times 30 \times 30$ points.]
  {\includegraphics[angle=-90,width=\xxxa]{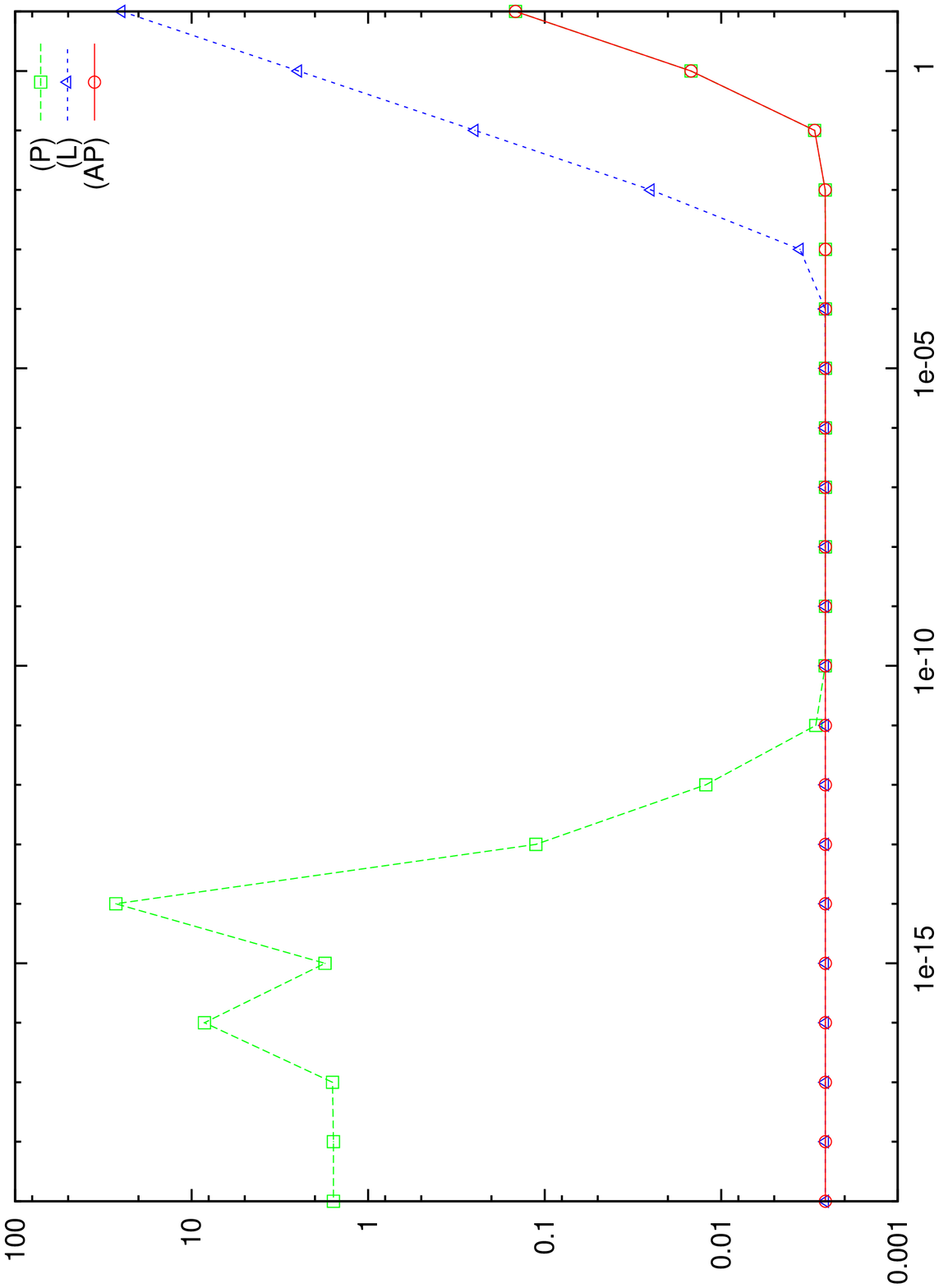}}

  \caption{Absolute $L^{2}$ (left column) and $H^{1}$ (right column)
    errors between the exact solution $\phi^{\varepsilon }$ and the
    computed solution $\phi _A$ (AP), $\phi _L$ (L), $\phi _P$ (P) for
    the 3D test case. The errors are plotted as a function of the
    anisotropy ratio $\varepsilon $.}
  \label{fig:error3d}
\end{figure}

\section{Conclusions}\label{SEC6}

The asymptotic preserving method presented in this paper is shown to
be very efficient for the solution of highly anisotropic elliptic
equations, where the anisotropy direction is given by an arbitrary,
but smooth vector field $b$ with non-adapted coordinates and
meshes. The results presented here generalize the procedure used in
\cite{DDN} and have the important advantage to permit the use of
Cartesian grids, independently on the shape of the
anisotropy. Moreover, the scheme is equally accurate, independently on
the anisotropy strength, avoiding thus the use of coupling
methods. The numerical study of this AP-scheme shall be investigated
in a forthcoming paper, in particular the $\eps$-independent
convergence results shall be stated.

Another important related work consists in extending our methods to
the case of anisotropy ratios $\eps$, which are variable in $\Omega$ from
moderate to very small values. This is important, for example, in
plasma physics simulations as already noted in the introduction. An
alternative strategy to the Asymptotic Preserving schemes whould be to
couple a standard discretization in subregions with moderate $\eps$
with a limit ($\eps\to 0$) model in subregions with small $\eps$ as
suggested, for example, in \cite{BCDDGT_1,Keskinen}. However, the
limit model is only valid for $\varepsilon \ll 1$ and cannot be
applied for weak anisotropies. Thus, the coupling strategy requires
existence of a range of anisotropy strength where both methods are
valid. This is rather undesirable since this range may not exist at
all, as illustrated by our results in Fig. \ref{fig:error}.

\appendix
\section{Decompositions $\mathcal{V}=\mathcal{G}\oplus ^{\perp }\mathcal{A}$, $\tilde{\mathcal{V}}=\tilde{\mathcal{G}}\oplus \mathcal{L}$ and related estimates} \label{appA}

\color{black}
We shall show in this Appendix that all the statements in Hypotheses B and B' can be rigorously derived under some 
assumptions on the domain boundary $\partial\Omega$ \fabrice{and on} the manner in which it is intersected by the field $b$. 
We assume essentially that $b$ is tangential to $\partial\Omega$ on $\partial\Omega_D$ and that $b$ penetrates the remaining part of the boundary $\partial\Omega_N$ at an angle 
that stays away from 0 on $\overline{\partial\Omega}_N$. We assume also that $\partial\Omega_N$ consists of two disjoint components for which there exist global and smooth parametrizations. 
This last assumption can be weakened (existence of an atlas of local smooth parametrizations should be sufficient) at the expense of lengthening the proofs. 
The precise set of our assumptions is the following:\\
\color{black}

\textbf{Hypothesis C} The boundary of $\Omega $ is the union of three
components: $\partial \Omega _{D}$\textit{\ }where $b\cdot n=0$, $\partial
\Omega _{in}$\textit{\ }where $b\cdot n\leq -\alpha $ and $\partial \Omega
_{out}$\textit{\ }where $b\cdot n\geq \alpha $ with some constant $\alpha >0$. 
Moreover, there is a smooth system of coordinates $\xi _{1},\ldots ,\xi
_{d-1}$ on $\partial \Omega _{in}$ meaning that there is a bounded domain $%
\Gamma _{in}\in \mathbb{R}^{d-1}$ and a one-to-one map $h_{in}:\Gamma
_{in}\rightarrow \mathbb{R}^{d}$ such that $h_{in}\in C^{2}(\overline{\Gamma}%
_{in})$ and  $\partial \Omega _{in}$ is the image of $h_{in}(\xi _{1},\ldots
,\xi _{d-1})$ as $(\xi _{1},\ldots ,\xi _{d-1})$ goes over $\Gamma _{in}$.
The matrix formed by the vectors $(\partial h_{in}/\partial \xi _{1},\ldots
,\partial h_{in}/\partial \xi _{1},n)$ is invertible for all $(\xi
_{1},\ldots ,\xi _{d-1})\in \overline{\Gamma}_{in}$. Similar assumptions hold
also for $\partial \Omega _{out}$ (changing $\Gamma _{in}$ to $\Gamma _{out}$
and  $h_{in}$ to $h_{out}$).\\

\bigskip Using this hypothesis we can introduce a system of coordinates in $\Omega$ 
such that the field lines of $b$ coincide with the coordinate lines. 
To do this consider the initial value problem for a parametrized ordinary
differential equation (ODE):
\begin{equation}
\frac{\partial X}{\partial \xi _{d}}(\xi ^{\prime },\xi _{d})=b(X(\xi
^{\prime },\xi _{d})),~X(\xi ^{\prime },0)=h_{in}(\xi ^{\prime }).
\label{odeX}
\end{equation}%
Here $X(\xi ^{\prime },\xi _{d})$ is $\mathbb{R}^{d}$-valued and $\xi
^{\prime }$ stands for $(\xi _{1},\ldots ,\xi _{d-1})$. For any
fixed $\xi ^{\prime }\in \Gamma _{in}$, $\ $equation (\ref{odeX}) should be
understood as an ODE for a function of $\xi _{d}$. Its solution $X(\xi
^{\prime },\xi _{d})$ goes then over the field line of $b$ starting (as $\xi
_{d}=0$) at the point on the inflow boundary $\partial \Omega _{in}$, parametrized by $%
\xi ^{\prime }$. This field line hits the outflow boundary $\partial \Omega
_{out}$ somewhere. In other words, for any $\xi ^{\prime }\in \Gamma _{in}$
there exists $L(\xi ^{\prime })>0$ such that $X(\xi ^{\prime },L(\xi
^{\prime }))\in \partial \Omega _{out}$. The domain of definition of $X$ is
thus%
\begin{equation*}
D=\{(\xi ^{\prime },\xi _{d})\in \mathbb{R}^{d}~/~\xi ^{\prime }\in \Gamma
_{in}\text{ and }0<\xi _{d}<L(\xi ^{\prime })\}.
\end{equation*}%
Gathering the results on parametrized ODEs, from for instance \cite{ODE}, we
conclude that $X(\xi ^{\prime },\xi _{d})=X(\xi _{1},\ldots ,\xi _{d})$ is a
smooth function of all its $d$ parameters, more precisely $X\in C^{2}(\overline{D}%
)$. Evidently, the map $X$ is one-to-one from $\overline{D}$ to $\overline{\Omega}$
and thus $\xi _{1},\ldots ,\xi _{d}$ provide a system of coordinates for $%
\overline{\Omega}$. Moreover this system is not degenerate in the sense that the
vectors $\partial X/\partial \xi _{1},\ldots ,\partial X/\partial \xi _{d}$
are linearly independent at each point of $\overline{\Omega}$. Indeed, if this
was not the case, then there would exist a non trivial linear combination  $%
\lambda _{1}\partial X/\partial \xi +\cdots +\lambda _{d}\partial X/\partial
\xi _{d}$ that would vanish at some point in $\overline{\Omega}$. But, ODE (\ref%
{odeX}) implies 
\begin{equation*}
\frac{\partial }{\partial \xi _{d}}\sum_{i=1}^{d}\lambda _{i}\frac{\partial X%
}{\partial \xi _{i}}(\xi ^{\prime },\xi _{d})=\nabla b(X(\xi ^{\prime },\xi
_{d}))\cdot \sum_{i=1}^{d}\lambda _{i}\frac{\partial X}{\partial \xi _{i}}%
(\xi ^{\prime },\xi _{d})
\end{equation*}%
so that, the unique solution of this ODE, i.e. the linear combination $\sum_{i=1}^{d}\lambda _{i}\frac{\partial X}{%
\partial \xi _{i}}$, would vanish on the whole field line, in particular on
the inflow. But this is impossible since $\frac{\partial X}{\partial \xi _{i}%
}=\frac{\partial h_{in}}{\partial \xi _{i}}$, $i=1,\ldots ,d-1$ on the
inflow, while $\frac{\partial X}{\partial \xi _{d}}=b$ and the vectors $%
\left( \frac{\partial h_{in}}{\partial \xi _{1}},\ldots ,\frac{\partial
h_{in}}{\partial \xi _{d-1}},b\right) $ are linearly independent for all $%
(\xi _{1},\ldots ,\xi _{d-1})\in \overline{\Gamma}_{in}$. We see thus that the
Jacobian $J=\det (\partial X_{j}/\partial \xi _{i})$ does not vanish on $%
\overline{\Omega}$ so that we can assume that $m<J<M$ everywhere on $\overline{\Omega}$
with some positive constants $m$ and $M$ (assuming that  $J$ is positive
does not prevent the generality since if \ $J$ is negative in $\Omega $ than
one can replace $\xi _{1}$ by $-\xi _{1}$).  Since $X \in C^2(\overline{\Omega})$, we have also that $J \in C^1(\overline{\Omega})$.

One also sees easily that the top of $D$ is given by a smooth function $%
L(\xi ^{\prime })$. Indeed, $L(\xi ^{\prime })$ is determined for each $\xi
^{\prime }\in \Gamma _{in}$ from the equation $X(\xi ^{\prime },L(\xi
^{\prime }))=h_{out}(\eta )$ with some $\eta =(\eta _{1},\ldots ,\eta
_{d-1})\in \Gamma _{out}$. We know already that this equation is solvable
for $\xi _{d}=L(\xi ^{\prime })$, $\eta _{1},\ldots ,\eta _{d-1}$ for any $%
\xi ^{\prime }\in \Gamma _{in}$. To conclude that the solution depends
smoothly on $\xi ^{\prime }$ we can apply the implicit function theorem to
the equation 
\begin{equation*}
F(\xi ^{\prime };\xi _{d},\eta _{1},\ldots ,\eta _{d-1})=X(\xi ^{\prime
},\xi _{d})-h_{out}(\eta _{1},\ldots ,\eta _{d-1})=0.
\end{equation*}%
Indeed, the $R^{d}$-valued function $F$ is smooth and the matrix of its
partial derivatives with respect to $\xi _{d},\eta _{1},\ldots ,\eta _{d-1}$
is invertible since $\partial F/\partial \xi _{d}=b$ and $\partial
F/\partial \eta _{i}=-\partial h_{out}/\partial \eta _{i}$ at some point at
the outflow and the vectors $\partial h_{out}/\partial \eta _{i}$ lie in the
tangent plane to $\partial \Omega _{out}$ while $b$ is nowhere in this
plane. We have moreover that $L\in C^{1}(\overline{\Gamma}_{in})$. Indeed, we can
prove that all the derivatives of $L$ are bounded. In order to do it, let us
remark that the differential of $X(\xi ^{\prime },L(\xi ^{\prime }))$
represents a vector in the tangent plane at some point on $\partial \Omega
_{out}$ so that it is perpendicular to the outward normal $n$. We have thus
for any $i=1,\ldots ,d-1$ 
\begin{equation*}
0=n\cdot \left( \frac{\partial X}{\partial \xi _{i}}(\xi ^{\prime },L(\xi
^{\prime }))+\frac{\partial X}{\partial \xi _{d}}(\xi ^{\prime },L(\xi
^{\prime }))\frac{\partial L}{\partial \xi _{i}}(\xi ^{\prime })\right)
=n\cdot \left( \frac{\partial X}{\partial \xi _{i}}(\xi ^{\prime },L(\xi
^{\prime }))+b(\xi ^{\prime },L(\xi ^{\prime }))\frac{\partial L}{\partial
\xi _{i}}(\xi ^{\prime })\right) 
\end{equation*}%
so that 
\begin{equation*}
\frac{\partial L}{\partial \xi _{i}}(\xi ^{\prime })=-\frac{n\cdot \frac{%
\partial X}{\partial \xi _{i}}(\xi ^{\prime },L(\xi ^{\prime }))}{n\cdot
b(\xi ^{\prime },L(\xi ^{\prime }))}
\end{equation*}%
and this is bounded since $X$ has bounded partial derivatives and $n\cdot
b\geq \alpha $ by the hypothesis. Note also that $L$ is strictly positive.

\begin{itemize}
\item
We can now establish the decomposition $\mathcal{V}=\mathcal{G}\oplus
^{\perp }\mathcal{A}$. Take any $\phi \in \mathcal{V}$ $\cap C^{1}(\overline{%
\Omega})$ and introduce $p\in L^{2}(\Omega )$ by%
\begin{equation}
p(x)=p(\xi ^{\prime },\xi _{d})=p(\xi ^{\prime })=\frac{\int_{0}^{L(\xi
^{\prime })}\phi (\xi ^{\prime },t)J(\xi ^{\prime },t)dt}{\int_{0}^{L(\xi
^{\prime })}J(\xi ^{\prime },t)dt}.  \label{pdef}
\end{equation}%
(from now on we switch back and forth between the Cartesian coordinates $%
x=(x_{1},\ldots ,x_{d})$ and the new ones $(\xi ^{\prime },\xi _{d})=(\xi
_{1},\ldots ,\xi _{d}))$. Evidently, $p$ is constant along each field line$.$ 
 Moreover, $p$ is the $L^{2}$-orthogonal projection of $\phi $ on the space
of such functions$.$ Indeed, if $\psi =\psi (\xi ^{\prime })\in L^{2}(\Omega )
$ is any function constant along each field line then%
\begin{eqnarray*}
\int_{\Omega }p\psi dx &=&\int_{D}p\psi Jd\xi =\int_{\Gamma _{in}}p(\xi
^{\prime })\psi (\xi ^{\prime })\int_{0}^{L(\xi ^{\prime })}J(\xi ^{\prime
},\xi _{d})d\xi _{d}d\xi ^{\prime } \\
&=&\int_{\Gamma _{in}}\int_{0}^{L(\xi ^{\prime })}\phi (\xi ^{\prime },\xi_d)\, \psi
(\xi ^{\prime })\,J(\xi ^{\prime },\xi _{d})d\xi _{d}d\xi ^{\prime
}=\int_{\Omega }\phi \psi dx.
\end{eqnarray*}%
Let us prove that $p\in \mathcal{V}$, i.e. that its derivatives are square
integrable. The change of variable $t=L(\xi')s$ yields the function
$$
p(\xi ^{\prime })=\frac{\int_{0}^1 \phi (\xi ^{\prime },L(\xi')s)J(\xi ^{\prime },L(\xi')s)ds}{\int_{0}^1 J(\xi ^{\prime },L(\xi')s)ds}. 
$$
Now we have  $\partial p/\partial \xi _{d}=0$ and  for all  $%
\partial p/\partial \xi _{i,i=1,\ldots ,d-1}$ denoting $a=a(\xi ^{\prime
})=(\int_{0}^{1}J(\xi ^{\prime },L(\xi')s)ds)^{-1}$, $\phi = \phi(\xi ^{\prime },L(\xi')s)$ and same for $J$ we obtain
\begin{equation}
\begin{array}{lll}
\ds \frac{\partial p}{\partial \xi _{i}}&=&\ds \frac{\partial a}{\partial \xi _{i}}%
\int_{0}^{1}\phi Jds+a\int_{0}^{1}\frac{\partial \phi }{\partial \xi _{i}}Jds+a\int_{0}^{1}\frac{\partial \phi }{\partial \xi _{d}}%
\, \frac{\partial L }{\partial \xi_i} s \,J\, ds\\[3mm]
&&\ds + a\int_{0}^{1} \phi \, \frac{\partial J }{\partial \xi _{i}} ds+a\int_{0}^{1} \phi \, \frac{\partial J }{\partial \xi _{d}} \, 
\frac{\partial L }{\partial \xi_i} s \,ds
\end{array}
\end{equation}%
Using all the previous bounds on the functions $L$ and $J$ and skipping the
details of somewhat tedious calculations, we arrive at%
\begin{eqnarray*}
\int_{\Omega }\left( \frac{\partial p}{\partial \xi _{i}}\right) ^{2}dx
&=&\int_{\Gamma _{in}}\int_{0}^{L(\xi ^{\prime })}\left( \frac{\partial p}{%
\partial \xi _{i}}\right) ^{2}Jd\xi _{d}d\xi ^{\prime } \\
&\leq &C\int_{\Gamma _{in}}\int_{0}^{L(\xi ^{\prime })}\left( \phi
^{2}+\left( \frac{\partial \phi }{\partial \xi _{i}}\right) ^{2}+\left( \frac{\partial \phi }{\partial \xi _{d}}\right) ^{2}\right)
Jd\xi _{d}d\xi ^{\prime }
\end{eqnarray*}%
implying
\begin{equation*}
\left\Vert \frac{\partial p}{\partial \xi _{i}}\right\Vert _{L^{2}(\Omega
)}^{2}\leq C\left( \left\Vert \phi \right\Vert _{L^{2}(\Omega
)}^{2}+\left\Vert \frac{\partial \phi }{\partial \xi _{i}}\right\Vert
_{L^{2}(\Omega )}^{2}+\left\Vert \frac{\partial \phi }{\partial \xi _{d}}\right\Vert
_{L^{2}(\Omega )}^{2}\right) \le C\left\Vert \phi \right\Vert
_{H^{1}(\Omega )}^2\,.
\end{equation*}%
Thus $p\in H^{1}(\Omega )$, hence $p\in \mathcal{G}$ and 
$q=\phi -p\in \mathcal{A}$. Since the dependence of $p$ on $\phi $ is
continuous in the norm of $H^{1}(\Omega )$, a density argument shows that the
decomposition $\phi =p+q$ with $p\in \mathcal{G}$ and $q\in \mathcal{A}$
exists for any $\phi \in \mathcal{V}$. 

\item Let us now introduce the operator $P$ as the $L^{2}$-orthogonal  projector on $%
\mathcal{G}$, that means
$$
P: {\cal V} \rightarrow {\cal G}\,, \quad \phi \in {\cal V} \longmapsto P \phi \in {\cal G}\quad \textrm{given by} \quad (\ref{pdef})\,. 
$$
Then, the estimates in
the preceding paragraph show that the operator $P$ is continuous in the norm of $%
H^{1}(\Omega )$:%
\begin{equation}
||\nabla _{\perp }(P\phi )||_{L^{2}(\Omega )}\leq C||\nabla \phi
||_{L^{2}(\Omega )}\,,\quad \forall \phi \in \mathcal{V}  \label{reg}
\end{equation}

\item We have also the following Poincar\'{e}-Wirtinger inequality: 
\begin{equation}
||\phi -P\phi ||_{L^{2}(\Omega )}\leq C||\nabla _{||}\phi ||_{L^{2}(\Omega
)}\,,\quad \forall \phi \in \mathcal{V}\,.  \label{PoinW_bis}
\end{equation}%
To prove this, it is sufficient to establish that $||q||_{L^{2}(\Omega
)}\leq C||\nabla _{||}q||_{L^{2}(\Omega )}$ for all $q\in \mathcal{A}$. We
observe that   
\begin{equation*}
||q||_{L^{2}(\Omega )}^{2}=\int_{\Gamma _{in}}\int_{0}^{L(\xi ^{\prime
})}q^{2}(\xi ^{\prime },\xi _{d})J(\xi ^{\prime },\xi _{d})d\xi _{d}d\xi
^{\prime }
\end{equation*}%
and 
\begin{equation*}
||\nabla _{||}\phi ||_{L^{2}(\Omega )}^{2}=\int_{\Gamma
_{in}}\int_{0}^{L(\xi ^{\prime })}\left( \frac{\partial q}{\partial \xi _{d}}%
\right) ^{2}(\xi ^{\prime },\xi _{d})J(\xi ^{\prime },\xi _{d})d\xi _{d}d\xi
^{\prime }\,.
\end{equation*}%
The requirement $q\in \mathcal{A}$ is equivalent to 
\begin{equation}
\int_{0}^{L(\xi ^{\prime })}q(\xi ^{\prime },\xi _{d})J(\xi ^{\prime },\xi
_{d})d\xi _{d}=0  \label{qrest}\quad \textrm{f.a.a.} \,\, \xi ^{\prime }\in \Gamma _{in}\,.
\end{equation}
We have thus to prove for
every $\xi ^{\prime }$ 
\begin{equation*}
\int_{0}^{L(\xi ^{\prime })}q^{2}(\xi ^{\prime },\xi _{d})J(\xi ^{\prime
},\xi _{d})d\xi _{d}\leq C^{2}\int_{0}^{L(\xi ^{\prime })}\left( \frac{%
\partial q}{\partial \xi _{d}}\right) ^{2}(\xi ^{\prime },\xi _{d})J(\xi
^{\prime },\xi _{d})d\xi _{d}
\end{equation*}%
provided (\ref{qrest}). Fixing any $\xi ^{\prime },$ making the change of
integration variable $\xi _{d}=L(\xi ^{\prime })t$ and introducing the functions $%
u(t)=q(\xi ^{\prime },L(\xi ^{\prime })t)J(\xi ^{\prime },L(\xi ^{\prime })t)
$ and $J(t)=J(\xi ^{\prime },L(\xi ^{\prime })t)$, we rewrite the last inequality
as 
\begin{equation}
\int_{0}^{1}\frac{u^{2}(t)}{J(t)}dt\leq \frac{C^{2}}{L^{2}(\xi ^{\prime })}%
\int_{0}^{1}\left( \frac{u^{\prime }(t)}{J(t)}-\frac{u(t)}{J^{2}(t)}%
J^{\prime }(t)\right) ^{2}J(t)dt\,.  \label{uine}
\end{equation}%
Since $\int_{0}^{1}u(t)dt=0$ we have by the standard Poincar\'{e} inequality%
\begin{equation}
\int_{0}^{1}u^{2}(t)dt\leq C_{P}^{2}\int_{0}^{1}\left( u^{\prime }(t)\right)
^{2}dt\,.  \label{uinep}
\end{equation}%

\item 
Let us turn to the verification of Hypothesis B'. Take any $u\in \mathcal{\tilde{%
V}}$. We want to prove that one can decompose $u=p+q$ with  $p\in \mathcal{%
\tilde{G}}$ and $q\in \mathcal{L}$ and the trace of $u$ on $\partial \Omega
_{in}$ (denoted $g$) is in $L^{2}(\partial \Omega _{in})$. In the $\xi $%
-coordinates we can write a surface  element of $\partial \Omega _{in}$ as $%
d\sigma =S(\xi ^{\prime })d\xi ^{\prime }$ with a function $S$ smoothly
depending on $\xi ^{\prime }$. We see now that for $u$ suffuciently smooth%
\begin{eqnarray*}
||g||_{L^{2}(\partial \Omega _{in})}^{2} 
&=&\int_{\Gamma _{in}}g^{2}(\xi^{\prime })S(\xi ^{\prime })d\xi ^{\prime }\\
&\leq& C\int_{0}^{1}\int_{\Gamma
_{in}}\left[ u^{2}(\xi ^{\prime },L(\xi ^{\prime })s)+\frac{1}{L(\xi
^{\prime })}\left( \frac{\partial u}{\partial \xi _{d}}\right) ^{2}(\xi
^{\prime },L(\xi ^{\prime })s)\right] S(\xi ^{\prime })d\xi ^{\prime }ds \\
&&\quad \text{(by a one-dimensional trace inequlity)} \\
&\leq &C||u||_{\mathcal{\tilde{V}}}^{2}
\end{eqnarray*}%
By density, the trace $g$ is thus defined for any $u\in \mathcal{\tilde{V}}$
with $||g||_{L^{2}(\partial \Omega _{in})}\leq C||u||_{\mathcal{\tilde{V}}}$%
. Taking $p=p(\xi ^{\prime })=g(\xi ^{\prime })$ we observe by a similar
calculation that $||p||_{L^{2}( \Omega)}\leq C||u||_{\mathcal{\tilde{V}}%
}$ so that $p\in \mathcal{\tilde{G}}$. By definition $q=u-p\in \mathcal{L}$.

\end{itemize}

\section{On the choice of the finite element space
  $\mathcal{L}_h$}\label{appB}

Let $\Omega$ be the rectangle $(0,L_x)\times(0,L_y)$ and the
anisotropy direction be constant and aligned with the $y$-axis:
$b=(0,1)$. Let us use the $\mathbb Q_k$ finite elements on a Cartesian
grid, i.e. take some basis function $\theta_{x_i}(x)$,
$i=0,\ldots,N_x$ and $\theta_{y_j}(y)$, $j=0,\ldots,N_y$ and define
the complete finite element space $X_h$ (without any restrictions on
the boundary) as span$\{\theta_{x_i}(x)\theta_{y_j}(y)\ 0\le i\le
N_x,\ 0\le j\le N_y\}$. The following subspace is then used for the
approximation of the unknowns $p,q,l \in {\cal V}$
$$
\mathcal{V}_h=\{v_h\in X_h/v_h|_{\partial\Omega_{D}}=0\}.
$$
We want to prove that taking for the approximation of $\lambda,\mu \in {\cal L}$ the space $\mathcal{L}_h$ under the form 
\be \label{L_H_bad}
\mathcal{L}_h=\{\lambda_h\in X_h/\lambda_h|_{\partial\Omega_{in}}=0\}\,,
\ee
leads to an ill posed problem (\ref{eq:Jt8a}).\\

\noindent\textbf{Claim} There exist $\lambda_h\in\mathcal{L}_h$, $\lambda_h \neq 0$, such that $a_{||}(\lambda_h,p_h)=0$ for all $p_h\in\mathcal{V}_h$. In fact 
there are exactly $2N_y$ linearly independent functions having this property.

\bigskip
\begin{rem} In the continuous case, the equation
$$
a_{||}(p,\lambda)=0\,, \quad \forall p \in {\cal V}\,,
$$
implies $\lambda = 0$ by density arguments. These density arguments are lost when discretizing the spaces ${\cal V}$ resp. ${\cal L}$. 
\end{rem}

\noindent\textbf{Proof of the Claim.} We can suppose that the basis functions $\theta_{ij}(x,y):=\theta_{x_i}(x)\theta_{y_j}(y)$ are 
enumerated so that $\theta_{ij}(0,y)=0$ for all $i\ge 1$ and $\theta_{0j}(0,y)\not=0$. 
Hence for all $p_h=\sum p_{ij}\theta_{ij}\in\mathcal{V}_h$, the coefficients satisfy $p_{0j}=0$
since the part of the boundary $\{x=0\}$ is in $\partial\Omega_{D}$. 
Let $M=(m_{ik})_{0\le i,k \le N_x}$ be the mass matrix in the $x$-direction: $m_{ik}=\int\theta_{x_i}(x)\theta_{x_k}(x)dx$. 
This matrix is invertible, hence there is a vector $a\in\mathbb{R}^{N_x+1}$ that solves $Ma=e$ with $e\in\mathbb{R}^{N_x+1}$, $e=(1,0,\ldots,0)^t$.
Take any fixed integer $j$, $1\le j\le N_y$ and define $\lambda_h\in\mathcal{L}_h$ as $\lambda_h=\sum a_i\theta_{ij}$. Then,
for all $p_h=\sum p_{kl}\theta_{kl}\in\mathcal{V}_h$ we have
\begin{align*} 
a_{||}(\lambda_h,p_h)
&=\sum_{i,k,l} a_ip_{kl}\int_\Omega\frac{\partial\theta_{ij}}{\partial y}\frac{\partial\theta_{kl}}{\partial y}dxdy\\
&=\sum_{i,k,l} a_ip_{kl} \int_0^{L_x}\theta_{x_i}(x)\theta_{x_k}(x)dx \int_0^{L_y}\theta'_{y_j}(y)\theta'_{y_l}(y)dy\\
&=\sum_{k,l} \delta_{k0}p_{kl}  \int_0^{L_y}\theta'_{y_j}(y)\theta'_{y_l}(y)dy = 0.
\end{align*} 
As we can do this for all  $(i,j)$, $i=0$, $1\le j\le N_y$ and in the same manner for all $(i,j)$, $i=N_x$, $1\le j\le N_y$, there are
$2N_y$ linearly independent functions with the property $a_{||}(\lambda_h,p_h)=0$ for all $p_h\in\mathcal{V}_h$. 

We see now that the system (\ref{eq:Jt8a}) with zero right hand side $f=0$ possesses non-zero solutions
$(p^\varepsilon_h,\;\lambda^\varepsilon_h,\;q^\varepsilon_h,\;l^\varepsilon_h,\;\mu^\varepsilon_h)
=(0,\lambda^j_h,0,0,0)$ where $\lambda^j_h$ is any of the functions constructed in the preceding paragraph. It means that (\ref{eq:Jt8a})
is ill posed, i.e. the corresponding matrix is singular.

\medskip

{\bf Acknowledgement.} This work has been supported by the Marie Curie Actions of the European 
Commission under the contract DEASE (MEST-CT-2005-021122),  by the French 
'Commissariat  \`a l'Energie Atomique (CEA)' under contracts ELMAG 
(CEA-Cesta 4600156289), and GYRO-AP (Euratom-CEA V 3629.001), by the Agence Nationale 
de la Recherche (ANR) under contract IODISEE (ANR-09-COSI-007-02), by the
'Fondation Sciences et Technologies pour l'A\'eronautique et l'Espace (STAE)'
under contract PLASMAX (RTRA-STAE/2007/PF/002) and by the Scientific
Council of the Universit\'e Paul Sabatier, under contract MOSITER. Support from the 
French magnetic fusion programme 'f\'ed\'eration de recherche sur la fusion 
par confinement magn\'etique' is also acknowledged. The authors would like
to express their gratitude to G. Gallice and C. Tessieras from CEA-Cesta for bringing 
their attention to this problem, to G. Falchetto, X. Garbet and M. Ottaviani from
CEA-Cadarache, for their constant support to this research programme.  

\medskip

\bibliographystyle{abbrv}
\bibliography{bib_aniso}
 
\end{document}